\newtheorem{theorem}{Theorem}
\newtheorem{proposition}{Proposition}
\newtheorem{problem}{Problem}
\newtheorem{definition}{Definition}
\newtheorem{algorithm}{Algorithm}
\DeclareMathOperator{\argmin}{argmin}
\title{Convergence rates of Gibbs measures with degenerate minimum}
\author{Pierre Bras\footnote{Sorbonne Universit\'e, Laboratoire de Probabilit\'es, Statistique et Mod\'elisation, UMR 8001, case 188, 4 pl. Jussieu, F-75252 Paris Cedex 5, France. E-mail: \texttt{pierre.bras@sorbonne-universite.fr}}}
\date{}
\begin{document}

\maketitle

\begin{abstract}
We study convergence rates of Gibbs measures, with density proportional to $e^{-f(x)/t}$, as $t \rightarrow 0$ where $f : \mathbb{R}^d \rightarrow \mathbb{R}$ admits a unique global minimum at $x^\star$.
We focus on the case where the Hessian is not definite at $x^\star$. We assume instead that the minimum is strictly polynomial and give a higher order nested expansion of $f$ at $x^\star$, which depends on every coordinate. We give an algorithm yielding such an expansion if the polynomial order of $x^\star$ is no more than $8$, in connection with Hilbert's $17^{\text{th}}$ problem. However, we prove that the case where the order is $10$ or higher is fundamentally different and that further assumptions are needed. We then give the rate of convergence of Gibbs measures using this expansion. Finally we adapt our results to the multiple well case.
\end{abstract}

\section{Introduction}

Gibbs measures and their convergence properties are often used in stochastic optimization to minimize a function defined on $\mathbb{R}^d$. That is, let $f : \mathbb{R}^d \rightarrow \mathbb{R}$ be a measurable function and let $x^\star \in \mathbb{R}^d$ be such that $f$ admits a global minimum at $x^\star$. It is well known \cite{hwang1980} that under standard assumptions, the associated Gibbs measure with density proportional to $e^{-f(x)/t}$ for $t >0$, converges weakly to the Dirac mass at $x^\star$, $\delta_{x^\star}$, when $t \rightarrow 0$. The Langevin equation $dX_s = -\nabla f(X_s) ds + \sigma dW_s$ consists in a gradient descent with Gaussian noise. For $\sigma = \sqrt{2t}$, its invariant measure has a density proportional to $e^{-f(x)/t}$ (see for example \cite{khasminskii2012}, Lemma 4.16), so for small $t$ we can expect it to converge to $\text{argmin}(f)$ \cite{dalalyan2016} \cite{barrera2020}.
The simulated annealing algorithm \cite{laarhoven1987} builds a Markov chain from the Gibbs measure where the parameter $t$ converges to zero over the iterations.
This idea is also used in \cite{gelfand-mitter}, giving a stochastic gradient descent algorithm where the noise is gradually decreased to zero.
Adding a small noise to the gradient descent allows to explore the space and to escape from traps such as local minima and saddle points which appear in non-convex optimization problems \cite{lazarev1992} \cite{dauphin2014}.
Such methods have been recently brought up to light again with SGLD (Stochastic Gradient Langevin Dynamics) algorithms \cite{welling2011} \cite{li2015}, especially for Machine Learning and calibration of artificial neural networks, which is a high-dimensional non-convex optimization problem.

\medskip

The rates of convergence of Gibbs measures have been studied in \cite{hwang1980}, \cite{hwang1981} and \cite{athreya2010} under differentiability assumptions on $f$. It turns out to be of order $t^{1/2}$ as soon as the Hessian matrix $\nabla^2 f(x^\star)$ is positive definite. Furthermore, in the multiple well case i.e. if the minimum of $f$ is attained at finitely many points $x_1^\star$, $\ldots$, $x_m^\star$, \cite{hwang1980} proves that the limit distribution is a sum of Dirac masses $\delta_{x_i^\star}$ with coefficients proportional to $\det(\nabla^2 f(x_i^\star))^{-1/2}$ as soon as all the Hessian matrices are positive definite. If such is not the case, we can conjecture that the limit distribution is concentrated around the $x_i^\star$ where the degeneracy is of the highest order.

\medskip

The aim of this paper is to provide a rate of convergence in this degenerate setting, i.e. when $x^\star$ is still a strict global minimum but $\nabla^2 f(x^\star)$ is no longer definite, which extends the range of applications of Gibbs measure-based algorithms where positive definiteness is generally assumed.
A general framework is given in \cite{athreya2010}, which provides rates of convergence based on dominated convergence. However a strong and rather technical assumption on $f$ is needed and checking it seems, to some extent, more demanding than proving the result. To be more precise, the assumption reads as follows: there exists a function $g : \mathbb{R}^d \rightarrow \mathbb{R}$ with $e^{-g} \in L^1(\mathbb{R}^d)$ and $\alpha_1, \ \ldots, \ \alpha_d \in (0,+\infty)$ such that
\begin{equation}
\label{eq:intro}
\forall h \in \mathbb{R}^d, \ \ \frac{1}{t} \left[ f(x^\star + \left( t^{\alpha_1} h_1,\ldots, t^{\alpha_d} h_d \right) ) - f(x^\star) \right] \underset{t \rightarrow 0}{\longrightarrow} g(h_1,\ldots,h_d).
\end{equation}

Our objective is to give conditions on $f$ such that \eqref{eq:intro} is fulfilled and then to elucidate the expression of $g$ depending on $f$ and its derivatives by studying the behaviour of $f$ at $x^\star$ in every direction. Doing so we can apply the results from \cite{athreya2010} yielding the convergence rate of the corresponding Gibbs measures. The orders $\alpha_1$, $\ldots$, $\alpha_d$ must be chosen carefully and not too big, as the function $g$ needs to depend on every of its variables $h_1$, $\ldots$, $h_d$, which is a necessary condition for $e^{-g}$ to be integrable.
We also extend our results to the multiple well case.

We generally assume $f$ to be coercive, i.e. $f(x) \rightarrow + \infty$ as $||x|| \rightarrow + \infty$, $\mathcal{C}^{2p}$ in a neighbourhood of $x^\star$ for some $p \in \mathbb{N}$ and we assume that the minimum is polynomial strict, i.e. the function $f$ is bounded below in a neighbourhood of $x^\star$ by some non-negative polynomial function, null only at $x^\star$. Thus we can apply a multi-dimensional Taylor expansion to $f$ at $x^\star$, where the successive derivatives of $f :\mathbb{R}^d \rightarrow \mathbb{R}$ are seen as symmetric tensors of $\mathbb{R}^d$. The idea is then to consider the successive subspaces where the derivatives of $f$ are null up to some order ; using that the Taylor expansion of $f(x^\star+h)-f(x^\star)$ is non-negative, some cross derivative terms are null. However a difficulty arises at orders $6$ and higher, as the set where the derivatives of $f$ are null up to some order is no longer a vector subspace in general. This difficulty is linked with Hilbert's $17^{\text{th}}$ problem \cite{hilbert1888}, stating that a non-negative multivariate polynomial cannot be written as the sum of squares of polynomials in general. We thus need to change the definition of the subspaces we consider.
Following this, we give a recursive algorithm yielding an adapted decomposition of $\mathbb{R}^d$ into vector subspaces and a function $g$ satisfying \eqref{eq:intro} up to a change of basis, giving a canonical higher order nested decomposition of $f$ at $x^\star$ in degenerate cases. An interesting fact is that the case where the polynomial order of $x^\star$ is $10$ or higher fundamentally differs from those of orders $2$, $4$, $6$ and $8$, owing to the presence of even cross terms which may be not null. The algorithm we provide works at the orders $10$ or higher only under the assumption that all such even cross terms are null. In general, it is more difficult to get a general expression of $g$ for the orders $10$ and higher.
We then apply our results to \cite{athreya2010}, where we give conditions such that the hypotheses of \cite{athreya2010}, especially \eqref{eq:intro}, are satisfied so as to infer rates of convergence of Gibbs measures in the degenerate case where $\nabla^2 f(x^\star)$ is not necessarily positive definite.
The function $g$ given by our algorithm is a non-negative polynomial function and non-constant in any of its variables, however it needs to be assumed to be coercive to be applied to \cite{athreya2010}. We study the case where $g$ is not coercive and give a method to deal with simple generic non-coercive cases, where our algorithm seems to be a first step to a more general procedure. However, we do not give a general method in this case.

Our results are applied to Gibbs measures but they can also be applied to more general contexts, as we give a canonical higher order nested expansion of $f$ at a minimum, in the case where some derivatives are degenerate.

For general properties of symmetric tensors we refer to \cite{comon2008}. In the framework of stochastic approximation, \cite{fort1999} Section 3.1 introduced the notion of strict polynomial local extremum and investigated their properties as higher order "noisy traps".

\medskip

The paper is organized as follows. In Section \ref{section:gibbs_measures}, we recall convergence properties of Gibbs measures and revisit the main theorem from \cite{athreya2010}. This theorem requires, as an hypothesis, to find an expansion of $f$ at its global minimum ; we properly state this problem in Section \ref{subsection:statement_of_problem} under the assumption of strict polynomial minimum. In Section \ref{section:main_result}, we state our main result for both single well and multiple well cases, as well as our algorithm.
In Section \ref{section:expansion}, we detail the expansion of $f$ at its minimum for each order and provide the proof. We give the general expression of the canonical higher order nested expansion at any order in Section \ref{sec:expansion_any_order}, where we distinguish the orders $10$ and higher from the lower ones. We then provide the proof for each order $2$, $4$, $6$ and $8$ in Sections \ref{section:order_2}, \ref{section:order_4}, \ref{section:order_6} and \ref{section:order_8} respectively. We need to prove that, with the exponents $\alpha_1$, $\ldots$, $\alpha_d$ we specify, the convergence in \eqref{eq:intro} holds ; we do so by proving that, using the non-negativity of the Taylor expansion, some cross derivative terms are zero. Because of Hilbert's $17^{\text{th}}$ problem, we need to distinguish the orders $6$ and $8$ from the orders $2$ and $4$, as emphasized in Section \ref{section:hilbert}. For orders $10$ and higher, such terms are not necessarily zero and must then be assumed to be zero. In Section \ref{section:order_10}, we give a counter-example if this assumption is not satisfied before proving the result.
In Section \ref{subsec:unif_non_constant}, we prove that for every order the resulting function $g$ is constant in none of its variables and that the convergence in \eqref{eq:intro} is uniform on every compact set.
In Section \ref{section:non_coercive}, we study the case where the function $g$ is not coercive and give a method to deal with the simple generic case.
In Section \ref{section:proofs_athreya}, we prove our main theorems stated in Section \ref{section:main_result} using the expansion of $f$ established in Section \ref{section:expansion}. Finally, in Section \ref{section:flat}, we deal with a "flat" example where all the derivatives in the local minimum are zero and where we cannot apply our main theorems.

\section{Definitions and notations}

We give a brief list of notations that are used throughout the paper.

We endow $\mathbb{R}^d$ with its canonical basis $(e_1,\ldots,e_d)$ and the Euclidean norm denoted by $|| \boldsymbol{\cdot} ||$. For $x \in \mathbb{R}^d$ and $r >0$ we denote by $\mathcal{B}(x,r)$ the Euclidean ball of $\mathbb{R}^d$ of center $x$ and radius $r$.
For $E$ a vector subspace of $\mathbb{R}^d$, we denote by $p_{_E} : \mathbb{R}^d \rightarrow E$ the orthogonal projection on $E$. For a decomposition of $\mathbb{R}^d$ into orthogonal subspaces, $\mathbb{R}^d = E_1 \oplus \cdots \oplus E_p$, we say that an orthogonal transformation $B \in \mathcal{O}_d(\mathbb{R})$ is adapted to this decomposition if for all $j \in \lbrace 1, \ldots, p \rbrace $,
$$ \forall	i \in \lbrace \dim(E_1)+\cdots+\dim(E_{j-1})+1, \ldots, \dim(E_1)+\cdots+\dim(E_j) \rbrace, \ B \cdot e_i \in E_j .$$
For $a, \ b \in \mathbb{R}^d$, we denote by $a \ast b$ the element-wise product, i.e.
$$ \forall i \in \lbrace 1, \ldots, d \rbrace, \ (a \ast b)_i = a_i b_i .$$
For $v^1$, $\ldots$, $v^k$ vectors in $\mathbb{R}^d$ and $T$ a tensor of order $k$ of $\mathbb{R}^d$, we denote the tensor product
$$ T \cdot (v^1 \otimes \cdots \otimes v^k) = \sum_{i_1,\ldots,i_k \in \{1,\ldots,d \}} T_{i_1 \cdots i_k} v^1_{i_1} \ldots v^k_{i_k} . $$
More generally, if $j \le k$ and $v^1, \ \ldots, \ v^j$ are $j$ vectors in $\mathbb{R}^d$, then  $T \cdot (v^1 \otimes \cdots \otimes v^j)$ is a tensor of order $k-j$ such that:
$$ T \cdot (v^1 \otimes \cdots \otimes v^j)_{i_{j+1}\ldots i_k} = \sum_{i_1,\ldots,i_j \in \{1,\ldots ,d \}} T_{i_1 \ldots i_k} v^1_{i_1} \ldots v^j_{i_j}. $$
For $h \in \mathbb{R}^d$, $h^{\otimes k}$ denotes the tensor of order $k$ such that
$$ h^{\otimes k} = (h_{i_1} \ldots h_{i_k} )_{i_1,\ldots,i_k \in \{1,\ldots,d \}}.$$

For a function $f \in \mathcal{C}^p\left(\mathbb{R}^d, \mathbb{R}\right)$, we denote $\nabla^k f(x)$ the differential of order $k \le p$ of $f$ at $x$, as $\nabla^k f(x)$ is the tensor of order $k$ defined by:
$$ \nabla^k f(x) = \left(\frac{\partial^k f(x)}{\partial x_{i_1} \cdots \partial x_{i_k}}\right)_{i_1,i_2,\ldots,i_k \in \{1,\ldots,d \}} .$$
By Schwarz's theorem, this tensor is symmetric, i.e. for all permutation $\sigma \in \mathfrak{S}_k$,
$$ \frac{\partial^k f(x)}{\partial x_{i_{\sigma(1)}} \cdots \partial x_{i_{\sigma(k)}}} = \frac{\partial^k f(x)}{\partial x_{i_1} \cdots \partial x_{i_k}} .$$
We recall the Taylor-Young formula in any dimension, and the Newton multinomial formula.
\begin{theorem}[Taylor-Young formula]
\label{theorem:taylor}
Let $f : \mathbb{R}^d \rightarrow \mathbb{R}$ be $\mathcal{C}^p$ and let $x \in \mathbb{R}^d$. Then:
$$ f(x + h) \underset{h \rightarrow 0}{=} \sum_{k=0}^{p} \frac{1}{k!} \nabla^k f(x) \cdot h^{\otimes k} + ||h||^p o(1) .$$
\end{theorem}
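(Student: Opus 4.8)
The statement to prove is the Taylor--Young formula in dimension $d$, so my plan is to reduce it to the one-dimensional case via a restriction to lines, combined with a uniform control of the remainder in the direction $h/\|h\|$.

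\medskip

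\textbf{Step 1: Reduction to a scalar function along a ray.} Fix $x \in \mathbb{R}^d$ and, for a unit vector $u \in \mathbb{R}^d$, define $\varphi_u(s) := f(x + su)$ for $s$ in a neighbourhood of $0$. Since $f$ is $\mathcal{C}^p$ near $x$, the chain rule gives that $\varphi_u$ is $\mathcal{C}^p$ near $0$ with
$$ \varphi_u^{(k)}(s) = \nabla^k f(x + su) \cdot u^{\otimes k}, \qquad 0 \le k \le p. $$
This identity is itself proved by induction on $k$: differentiating $\nabla^{k-1} f(x+su)\cdot u^{\otimes (k-1)}$ in $s$ brings down one more contraction against $u$, using that $\frac{d}{ds}\big(\partial_{i_1\cdots i_{k-1}} f(x+su)\big) = \sum_{i_k} \partial_{i_1\cdots i_k} f(x+su)\, u_{i_k}$. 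The one-dimensional Taylor--Young formula applied to $\varphi_u$ at $0$ then yields, for each fixed $u$,
$$ f(x + su) = \sum_{k=0}^p \frac{s^k}{k!} \nabla^k f(x) \cdot u^{\otimes k} + s^p \, \varepsilon_u(s), $$
where $\varepsilon_u(s) \to 0$ as $s \to 0$.

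\medskip

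\textbf{Step 2: Uniformity in the direction.} Writing a general small $h \neq 0$ as $h = \|h\| u$ with $u = h/\|h\|$, and using homogeneity $\nabla^k f(x) \cdot h^{\otimes k} = \|h\|^k \, \nabla^k f(x) \cdot u^{\otimes k}$, the previous display becomes exactly the claimed expansion with remainder $\|h\|^p \varepsilon_{h/\|h\|}(\|h\|)$. It remains to check that this remainder is $\|h\|^p o(1)$ as $h \to 0$, i.e. that $\varepsilon_u(s) \to 0$ uniformly in $u$ on the unit sphere. For this I would use the integral (or Lagrange) form of the one-dimensional remainder,
$$ \varphi_u(s) - \sum_{k=0}^p \frac{s^k}{k!}\varphi_u^{(k)}(0) = \frac{s^p}{(p-1)!} \int_0^1 (1-\tau)^{p-1}\big(\varphi_u^{(p)}(\tau s) - \varphi_u^{(p)}(0)\big)\, d\tau, $$
so that $|\varepsilon_u(s)| \le \frac{1}{p!} \sup_{|\sigma| \le |s|} \big| \nabla^p f(x + \sigma u) \cdot u^{\otimes p} - \nabla^p f(x) \cdot u^{\otimes p} \big|$. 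Since $\nabla^p f$ is continuous at $x$ — hence uniformly continuous on a compact ball $\overline{\mathcal{B}}(x,r)$ — and $\|u\| = 1$, the right-hand side is bounded by $\frac{1}{p!}\sup_{\|y - x\| \le |s|} \|\nabla^p f(y) - \nabla^p f(x)\|_{\mathrm{op}}$, which tends to $0$ as $s \to 0$, uniformly in $u$. This gives the desired $o(1)$.

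\medskip

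\textbf{Main obstacle.} The one-dimensional Taylor--Young formula and the chain-rule computation of $\varphi_u^{(k)}$ are routine; the only genuine point requiring care is the \emph{uniformity of the remainder over all directions} $u$, which is what licenses replacing the direction-dependent $\varepsilon_u(\|h\|)$ by a single $o(1)$. This is precisely where the hypothesis that $f$ is $\mathcal{C}^p$ (not merely $p$ times differentiable) in a \emph{neighbourhood} of $x$ is used, through uniform continuity of $\nabla^p f$ on a closed ball around $x$. Everything else is bookkeeping with the multilinear notation $\nabla^k f(x) \cdot h^{\otimes k}$ introduced above.
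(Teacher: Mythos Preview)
Your argument is correct: the reduction to the one-dimensional Taylor formula along rays, the chain-rule identity $\varphi_u^{(k)}(s)=\nabla^k f(x+su)\cdot u^{\otimes k}$, and the use of the integral remainder to obtain uniformity of $\varepsilon_u(s)\to 0$ over the unit sphere are all sound; the bound $|\varepsilon_u(s)|\le \tfrac{1}{p!}\sup_{\|y-x\|\le |s|}\|\nabla^p f(y)-\nabla^p f(x)\|_{\mathrm{op}}$ is exactly what is needed and relies only on continuity of $\nabla^p f$ at $x$.

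There is nothing to compare against, however: the paper does not prove Theorem~\ref{theorem:taylor}. It is merely \emph{recalled} as a standard prerequisite (alongside the multinomial formula) in the notation section, without any argument. Your write-up thus supplies a proof where the paper gives none.
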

\noindent We denote by $\binom{k}{i_1, \ldots, i_p}$ the $p$-nomial coefficient, defined as:
$$ \binom{k}{i_1,\ldots ,i_p} = \frac{k!}{i_1!\ldots i_p!} .$$
\begin{theorem}[Newton multinomial formula]
Let $h_1, \ \ldots, \ h_p \in \mathbb{R}^d$, then
\begin{equation}
\label{equation:multinomial}
(h_1 + h_2 + \cdots + h_p)^{\otimes k} = \sum_{\substack{i_1,\ldots,i_p \in \lbrace 0,\ldots,k \rbrace \\ i_1 + \cdots + i_p = k }} \binom{k}{i_1, \ldots, i_p} h_1^{\otimes i_1} \otimes \cdots \otimes h_p^{\otimes i_p} .
\end{equation}
\end{theorem}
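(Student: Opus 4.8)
The plan is to obtain \eqref{equation:multinomial} by expanding the left-hand side along the multilinearity of the tensor product and then collecting the rank-one terms according to how many factors equal each $h_j$. Writing $(h_1+\cdots+h_p)^{\otimes k}$ as the $k$-fold tensor product of $h_1+\cdots+h_p$ with itself and distributing over all $k$ factors, one gets
\[ (h_1+\cdots+h_p)^{\otimes k} = \sum_{\phi:\{1,\ldots,k\}\to\{1,\ldots,p\}} h_{\phi(1)}\otimes\cdots\otimes h_{\phi(k)}, \]
a sum of $p^k$ rank-one tensors indexed by the maps $\phi$. To each $\phi$ I attach the multi-index $(i_1,\ldots,i_p)$ with $i_j:=\#\,\phi^{-1}(\{j\})$; this is an element of $\{0,\ldots,k\}^p$ with $i_1+\cdots+i_p=k$, and every such multi-index is attained.

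Next I would count the maps with a prescribed multi-index $(i_1,\ldots,i_p)$: picking which $i_1$ of the $k$ tensor slots are sent to $1$, then which $i_2$ of the remaining ones to $2$, and so on, gives $\binom{k}{i_1}\binom{k-i_1}{i_2}\cdots\binom{i_p}{i_p} = k!/(i_1!\cdots i_p!) = \binom{k}{i_1,\ldots,i_p}$ maps, by the telescoping of the factorials. For each such $\phi$ the tensor $h_{\phi(1)}\otimes\cdots\otimes h_{\phi(k)}$ agrees, after reordering its slots, with $h_1^{\otimes i_1}\otimes\cdots\otimes h_p^{\otimes i_p}$; substituting this common value and summing over the $\binom{k}{i_1,\ldots,i_p}$ maps of that multi-index, then over all admissible multi-indices, produces exactly the right-hand side of \eqref{equation:multinomial}.

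If an inductive proof is preferred, one can instead induct on the number $p$ of summands. The case $p=1$ is trivial; for the step, the two-summand identity $(a+b)^{\otimes k}=\sum_{j=0}^k\binom{k}{j}\,a^{\otimes j}\otimes b^{\otimes(k-j)}$ (itself a special case of the expansion above, or a one-line induction on $k$ via Pascal's rule) applied with $a=h_1+\cdots+h_p$, $b=h_{p+1}$, followed by the induction hypothesis on $(h_1+\cdots+h_p)^{\otimes j}$ and the elementary identity $\binom{k}{j}\binom{j}{i_1,\ldots,i_p}=\binom{k}{i_1,\ldots,i_p,k-j}$, gives the formula for $p+1$ summands.

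The only point that needs a word of care is that $\otimes$ is not symmetric, so the identification of $h_{\phi(1)}\otimes\cdots\otimes h_{\phi(k)}$ with the sorted tensor $h_1^{\otimes i_1}\otimes\cdots\otimes h_p^{\otimes i_p}$ holds only up to the permutation of the $k$ tensor slots; accordingly \eqref{equation:multinomial} should be read in the space of symmetric tensors, or --- as it is always used in what follows, in tandem with Theorem \ref{theorem:taylor} and applied to the symmetric tensors $\nabla^k f(x)$ --- after contraction against a symmetric tensor, in which case all reorderings give the same contribution. Beyond this bookkeeping the argument is purely combinatorial and I expect no genuine difficulty.
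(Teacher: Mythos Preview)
The paper states this theorem without proof, treating it as a standard identity, so there is no ``paper's own proof'' to compare against. Your argument is correct: expanding by multilinearity and grouping by multi-index is exactly the right approach, and your count of maps with a given multi-index is the standard one.

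Your closing remark is the most valuable part. As literally written, \eqref{equation:multinomial} is false as an identity of plain tensors, since for instance $(h_1+h_2)^{\otimes 2}=h_1^{\otimes 2}+h_1\otimes h_2+h_2\otimes h_1+h_2^{\otimes 2}$, whereas the right-hand side reads $h_1^{\otimes 2}+2\,h_1\otimes h_2+h_2^{\otimes 2}$. You correctly diagnose that the identity is intended either in the symmetric tensor algebra or, equivalently, after contraction against a symmetric tensor --- and indeed every use of \eqref{equation:multinomial} in the paper is of the form $T_k\cdot(h_1+\cdots+h_p)^{\otimes k}$ with $T_k=\nabla^k f(x^\star)$ symmetric. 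So your caveat is not pedantry but the precise sense in which the statement holds.
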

\noindent For $T$ a tensor of order $k$, we say that $T$ is non-negative (resp. positive) if
\begin{equation}
\label{eq:tensor_positive_def}
\forall h \in \mathbb{R}^d, \ T \cdot h^{\otimes k} \ge 0 \text{ (resp. } T \cdot h^{\otimes k} > 0 \text{)}.
\end{equation}
We denote $L^1(\mathbb{R}^d)$ the set of measurable functions $f:\mathbb{R}^d \rightarrow \mathbb{R}$ that are integrable with respect to the Lebesgue measure on $\mathbb{R}^d$. We denote by $\lambda_d$ the Lebesgue measure on $\mathbb{R}^d$. For $f : \mathbb{R}^d \rightarrow \mathbb{R}$ such that $e^{-f} \in L^1(\mathbb{R}^d)$, we define for $t > 0$, $C_t := \left( \int_{\mathbb{R}^d} e^{-f/t} \right)^{-1}$ and $\pi_t$ the Gibbs measure
$$ \pi_t(x)dx := C_t e^{-f(x)/t} dx .$$
For a family of random variables $(Y_t)_{t \in (0,1]}$ and $Y$ a random variable, we write $Y_t \underset{t \rightarrow 0}{\overset{\mathscr{L}}{\longrightarrow}} Y$ meaning that $(Y_t)$ weakly converges to $Y$.

We give the following definition of a strict polynomial local minimum of $f$:
\begin{definition}
Let $f : \mathbb{R}^d \rightarrow \mathbb{R}$ be $\mathcal{C}^{2p}$ for $p \in \mathbb{N}$ and let $x^\star$ be a local minimum of $f$. We say that $f$ has a strict polynomial local minimum at $x^\star$ of order $2p$ if $p$ is the smallest integer such that:
\begin{equation}
\label{eq:polynomial_strict}
\exists r >0, \ \forall h \in \mathcal{B}(x^\star, r) \setminus \{0 \} , \ \sum_{k=2}^{2p} \frac{1}{k!} \nabla^k f (x^\star) \cdot h^{\otimes k} > 0 .
\end{equation}
\end{definition}
\textbf{Remarks :}
\begin{enumerate}
	\item A local minimum $x^\star$ of $f$ is not necessarily strictly polynomial, for example, $f : x \mapsto e^{-||x||^{-2}}$ and $x^\star = 0$.
	\item If $x^\star$ is polynomial strict, then the order is necessarily even, because if $x^\star$ is not polynomial strict of order $2l$ for some $l \in \mathbb{N}$, then we have $h_n \rightarrow 0$ such that the Taylor expansion in $h_n$ up to order $2l$ is zero ; by the minimum condition, the Taylor expansion in $h_n$ up to order $2l+1$ must be non-negative, so we also have $\nabla^{2l+1}f(x^\star) \cdot h_n^{\otimes 2l+1} = 0$.
\end{enumerate}

\medskip

For $f :\mathbb{R}^d \rightarrow \mathbb{R}$ such that $\min_{\mathbb{R}^d}(f)$ exists, we denote by $\text{argmin}(f)$ the arguments of the minima of $f$, i.e.
$$ \text{argmin}(f) = \left\lbrace x \in \mathbb{R}^d : \ f(x) = \min_{\mathbb{R}^d}(f) \right\rbrace .$$
Without ambiguity, we write "minimum" or "local minimum" to designate $f(x^\star)$ as well as $x^\star$.
Finally, we define, for $x^\star \in \mathbb{R}^d$ and $p \in \mathbb{N}$:
\begin{align*}
\mathscr{A}_p(x^\star) & := \left\lbrace f \in \mathcal{C}^{2p}(\mathbb{R}^d, \mathbb{R}) : \ f \text{ admits a local minimum at } x^\star \right\rbrace . \\
\mathscr{A}_p^\star(x^\star) & := \left\lbrace f \in \mathcal{C}^{2p}(\mathbb{R}^d, \mathbb{R}) : \ f \text{ admits a strict polynomial local minimum at } x^\star \text{ of order } 2p \right\rbrace .
\end{align*}

\section{Convergence of Gibbs measures}
\label{section:gibbs_measures}

\subsection{Properties of Gibbs measures}
Let us consider a Borel function $f : \mathbb{R}^d \rightarrow \mathbb{R}$ with $e^{-f} \in L^1(\mathbb{R}^d)$. We study the asymptotic behaviour of the probability measures of density for $t \in (0,\infty)$:
$$ \pi_t(x) dx = C_t e^{-\frac{f(x)}{t}} dx $$
when $t \rightarrow 0$. When $t$ is small, the measure $\pi_t$ tends to the set $\argmin(f)$. The following proposition makes this statement precise.

\begin{proposition}
\label{proposition:gibbs}
Let $f : \mathbb{R}^d \rightarrow \mathbb{R}$ be a Borel function such that
$$ f^\star := \textup{essinf}(f) = \inf \{y : \ \lambda_d\{f \le y \} >0 \} > - \infty ,$$
and $e^{-f} \in L^1(\mathbb{R}^d)$. Then
$$ \forall \varepsilon >0, \ \pi_t(\{ f \ge f^\star + \varepsilon\} ) \underset{t \rightarrow 0}{\longrightarrow} 0 .$$
\end{proposition}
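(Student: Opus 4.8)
The plan is to write, for each $\varepsilon>0$ and $t>0$,
$$ \pi_t(\{f \ge f^\star + \varepsilon\}) = \frac{\int_{\{f \ge f^\star+\varepsilon\}} e^{-f(x)/t}\,dx}{\int_{\mathbb{R}^d} e^{-f(x)/t}\,dx}, $$
to bound the numerator from above and the denominator from below, and to observe that the resulting quotient decays like $e^{-c/t}$ for some $c>0$.

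First I would unpack the hypothesis $f^\star = \mathrm{essinf}(f) \in \mathbb{R}$ into the two consequences it provides: that $f \ge f^\star$ holds $\lambda_d$-almost everywhere (since $\lambda_d\{f \le y\} = 0$ for every $y < f^\star$), and that $\lambda_d\{f \le f^\star + \delta\} > 0$ for every $\delta > 0$. Applying the second fact with $\delta = \varepsilon/2$ and using that $\lambda_d$ is $\sigma$-finite, I fix a measurable set $A \subset \{f \le f^\star + \varepsilon/2\}$ with $m := \lambda_d(A) \in (0,\infty)$.

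For the denominator, on $A$ one has $e^{-f/t} \ge e^{-(f^\star+\varepsilon/2)/t}$, hence $\int_{\mathbb{R}^d} e^{-f/t}\,d\lambda_d \ge m\,e^{-(f^\star+\varepsilon/2)/t}$. For the numerator — the step where the assumption $e^{-f}\in L^1(\mathbb{R}^d)$ is used — I factor, for $t \in (0,1)$, $e^{-f/t} = e^{-f}\,e^{-(1-t)f/t}$, and bound the second factor on $\{f \ge f^\star+\varepsilon\}$: since $(1-t)/t > 0$ it is at most $e^{-(1-t)(f^\star+\varepsilon)/t}$, so that $\int_{\{f\ge f^\star+\varepsilon\}} e^{-f/t}\,d\lambda_d \le e^{-(1-t)(f^\star+\varepsilon)/t}\,\|e^{-f}\|_{L^1}$. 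Dividing and simplifying the exponent, namely $-(1-t)(f^\star+\varepsilon)/t + (f^\star+\varepsilon/2)/t = (f^\star+\varepsilon) - \varepsilon/(2t)$, gives $\pi_t(\{f\ge f^\star+\varepsilon\}) \le (\|e^{-f}\|_{L^1}/m)\,e^{f^\star+\varepsilon}\,e^{-\varepsilon/(2t)}$, which tends to $0$ as $t \to 0$.

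The only point requiring a little care is that the sublevel sets of $f$ may have infinite Lebesgue measure, so one cannot control the numerator by (measure of the set) $\times$ (supremum of the integrand); this is precisely why one integrates against the finite measure $e^{-f}\,d\lambda_d$ in the numerator and passes to a finite-measure subset $A$ for the denominator. No regularity of $f$ beyond Borel measurability is needed, and I do not expect any genuine obstacle: the argument is the classical Laplace-type estimate.
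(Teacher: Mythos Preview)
Your proof is correct and follows essentially the same Laplace-type argument as the paper: bound the denominator below via a sublevel set of positive finite measure, bound the numerator above by splitting the exponent and using $e^{-f}\in L^1$, and combine to obtain exponential decay in $1/t$. The only cosmetic differences are that the paper splits $e^{-f/t}=e^{-2f/(3t)}e^{-f/(3t)}$ (bounding the integral of the second factor by $C_{3t}^{-1}\le C_1^{-1}$) rather than your $e^{-f/t}=e^{-f}e^{-(1-t)f/t}$, and that the paper observes directly that $\lambda_d\{f\le f^\star+\varepsilon/3\}\le e^{\varepsilon/3}\|e^{-f}\|_{L^1}<\infty$ instead of invoking $\sigma$-finiteness to extract a finite-measure subset.
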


\begin{proof}
As $f^\star > -\infty$, we may assume without loss of generality that $f^\star = 0$ by replacing $f$ by $f-f^\star$. Let $\varepsilon>0$. It follows from the assumptions that $f\ge 0$ $\lambda_d$-$a.e.$ and $\lambda_d \lbrace f \le \varepsilon \rbrace >0$ for every $\varepsilon>0$. As $e^-f  \in L^1(\mathbb{R}^d)$, we have
$$ \lambda_d \lbrace f \le \varepsilon/3 \rbrace \le e^{\varepsilon/3} \int_{\mathbb{R}^d} e^{-f} d\lambda_d<+\infty. $$
Moreover by dominated convergence, it is clear that 
$$ C_t^{-1} \downarrow  \lambda_d\lbrace f=0 \rbrace<+\infty. $$

We have
$$C_t \le \left(\int_{f \le \varepsilon/3} e^{-\frac{f(x)}{t}}dx \right)^{-1} \le \left( e^{-\frac{\varepsilon}{3t}} \underbrace{\lambda_d \{ f \le \frac{\varepsilon}{3} \} }_{>0} \right)^{-1}. $$
Then
\begin{align*}
\pi_t\{f \ge \varepsilon\} = C_t \int_{f \ge \varepsilon} e^{-\frac{f(x)}{t}}dx \le \frac{e^{\varepsilon/3t} \int_{f \ge \varepsilon} e^{-f(x) / t}dx }{\lambda_d \{ f \le \frac{\varepsilon}{3} \}} \le \frac{e^{-\varepsilon/3t}C_{3t}^{-1}}{\lambda_d \{ f \le \frac{\varepsilon}{3} \}} \underset{t \rightarrow 0}{\longrightarrow} 0,
\end{align*}
because if $f(x) \ge \varepsilon$, then $e^{-\frac{f(x)}{t}} \le e^{-\frac{2\varepsilon}{3t}}e^{-\frac{f(x)}{3t} }$, and where we used that $C_{3t}^{-1}\le C_1^{-1}$ if $t\le 1/3$
\end{proof}

Now, let us assume that $f : \mathbb{R}^d \rightarrow \mathbb{R}$ is continuous, $e^{-f} \in L^1(\mathbb{R}^d)$ and $f$ admits a unique global minimum at $x^\star$ so that $\text{argmin}(f) = \{ x^\star \}$.
In \cite{athreya2010} is proved the weak convergence of $\pi_t$ to $\delta_{x^\star}$ and a rate of convergence depending on the behaviour of $f(x^\star + h)-f(x^\star)$ for small enough $h$. Let us recall this result in detail ; we may assume without loss of generality that $x^\star=0$ and $f(x^\star) = 0$.

\begin{theorem}[Athreya-Hwang, 2010]
\label{theorem:athreya:1}
Let $f : \mathbb{R}^d \rightarrow [0,\infty)$ be a Borel function such that :
\begin{enumerate}
\item $e^{-f} \in L^1(\mathbb{R}^d)$.
\item For all $\delta > 0$, $\inf \{ f(x), \ ||x|| > \delta \} > 0$.
\item There exist $\alpha_1, \ \ldots, \ \alpha_d > 0$ such that for all $(h_1,\ldots,h_d) \in \mathbb{R}^d$,
$$ \frac{1}{t} f(t^{\alpha_1} h_1,\ldots, t^{\alpha_d} h_d) \underset{t \rightarrow 0}{\longrightarrow} g(h_1,\ldots,h_d) \in \mathbb{R}. $$
\item $ \displaystyle\int_{\mathbb{R}^d} \sup_{0<t<1} e^{-\frac{f\left(t^{\alpha_1}h_1,\ldots ,t^{\alpha_d}h_d\right)}{t}} dh_1\ldots dh_d < \infty$.
\end{enumerate}
For $0<t<1$, let $X_t$ be a random vector with distribution $\pi_t$. Then $e^{-g} \in L^1(\mathbb{R}^d)$ and
\begin{equation}
\label{equation:athreya_assumption:3}
\left( \frac{(X_t)_1}{t^{\alpha_1}}, \ldots, \frac{(X_t)_d}{t^{\alpha_d}} \right) \overset{\mathscr{L}}{\longrightarrow} X  \ \text{ as } t \rightarrow 0
\end{equation}
where the distribution of $X$ has a density proportional to $e^{-g(x_1,\ldots,x_d)}$.
\end{theorem}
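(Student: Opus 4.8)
The plan is to perform the natural rescaling change of variables and then run a dominated-convergence argument on the resulting densities. Since $f \ge 0$, hypothesis~1 gives $e^{-f/t} \le e^{-f} \in L^1(\mathbb{R}^d)$ for every $t \in (0,1)$, so $\pi_t$ is a well-defined probability measure with density $C_t e^{-f/t}$. For $t \in (0,1)$ set $Y_t := \big( (X_t)_1 t^{-\alpha_1}, \ldots, (X_t)_d t^{-\alpha_d} \big)$; a linear change of variables (the Jacobian $t^{\alpha_1+\cdots+\alpha_d}$ cancelling against the normalising constant) shows that $Y_t$ has density
$$ q_t(h) = \frac{1}{Z_t}\, e^{-f(t^{\alpha_1}h_1,\ldots,t^{\alpha_d}h_d)/t}, \qquad Z_t := \int_{\mathbb{R}^d} e^{-f(t^{\alpha_1}u_1,\ldots,t^{\alpha_d}u_d)/t}\, du . $$
Here $0 < Z_t < \infty$: positivity is clear since the integrand is everywhere positive, and finiteness holds because, by hypothesis~4, the integrand is dominated — uniformly in $t \in (0,1)$ — by the fixed $L^1$ function $\Phi(h) := \sup_{0<t<1} e^{-f(t^{\alpha_1}h_1,\ldots,t^{\alpha_d}h_d)/t}$.

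Before passing to the limit I would record that $e^{-g} \in L^1(\mathbb{R}^d)$. Indeed, by hypothesis~3, for every $h$ we have $e^{-g(h)} = \lim_{t\to 0^+} e^{-f(t^{\alpha_1}h_1,\ldots,t^{\alpha_d}h_d)/t} \le \Phi(h)$, and $\Phi \in L^1$; since moreover $g$ is real-valued, $e^{-g}$ is everywhere positive, so $Z := \int_{\mathbb{R}^d} e^{-g} \in (0,\infty)$. Note also that $g$, being a pointwise limit of Borel functions, is Borel, so $Z^{-1}e^{-g}$ is a genuine probability density.

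The core step is then to show that $q_t \to q := Z^{-1}e^{-g}$ pointwise and to upgrade this to weak convergence of laws. Fix an arbitrary sequence $t_n \downarrow 0$: the integrand in $Z_{t_n}$ converges pointwise to $e^{-g}$ by hypothesis~3 and is dominated by $\Phi$, so dominated convergence gives $Z_{t_n} \to Z$; as the sequence was arbitrary, $Z_t \to Z$ as $t\to 0^+$, whence $q_t(h) \to q(h)$ for every $h$. Since $q_t$ and $q$ are probability densities and $q_t \to q$ pointwise, Scheffé's lemma gives $\|q_t - q\|_{L^1} \to 0$, i.e. the law of $Y_t$ converges in total variation, a fortiori weakly, to the law with density $q$, which is exactly \eqref{equation:athreya_assumption:3}. (One can bypass Scheffé: for bounded continuous $\varphi$ one has $\int \varphi\, q_t = Z_t^{-1}\int \varphi(h)\, e^{-f(t^{\alpha_1}h_1,\ldots,t^{\alpha_d}h_d)/t}\,dh$, and the majorant $\|\varphi\|_\infty \Phi$ lets dominated convergence send the integral to $\int \varphi\, e^{-g}$, while $Z_t \to Z$.)

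I do not expect a genuine obstacle here: the proof is essentially bookkeeping around two applications of dominated convergence, and the only point deserving care is that hypothesis~4 is precisely what supplies a single $t$-independent $L^1$ majorant ($\Phi$) valid along every sequence $t_n \to 0$, which is what licenses all the interchanges of limit and integral as well as the conclusions $e^{-g}\in L^1$ and $Z<\infty$. (Hypothesis~2 is not used in this rescaled statement; together with hypothesis~1 it secures the separate fact that $\pi_t \Rightarrow \delta_{x^\star}$, i.e. that the rescaling is centred at the right point, but it plays no role in identifying the rescaled limit.)
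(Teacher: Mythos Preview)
Your proof is correct and is essentially the standard argument (change of variables, dominated convergence on the rescaled densities, then Scheff\'e or a direct dominated-convergence computation of $\int\varphi\,q_t$). Note, however, that the paper does not itself prove this theorem: it is quoted from \cite{athreya2010} and only \emph{used} later, so there is no in-paper proof to compare against. Your observation that hypothesis~2 plays no role in the rescaled-limit statement is accurate; in the paper that hypothesis is what ensures $\pi_t\Rightarrow\delta_0$ (via Proposition~\ref{proposition:gibbs}), not what drives the identification of the rescaled limit.
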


\noindent \textbf{Remark:} Hypothesis 2. is verified as soon as $f$ is continuous, coercive (i.e. $f(x) \longrightarrow + \infty$ when $||x|| \rightarrow + \infty$) and that $\text{argmin}(f) = \lbrace 0 \rbrace$.

\medskip

To study the rate of convergence of the measure $\pi_t$ when $t \rightarrow 0$ using Theorem \ref{theorem:athreya:1}, we need to identify $\alpha_1,\ldots, \ \alpha_d$ and $g$ such that the condition \eqref{equation:athreya_assumption:3} holds, up to a possible change of basis.
Since $x^\star$ is a local minimum, the Hessian $\nabla^2 f(x^\star)$ is positive semi-definite. Moreover, if $\nabla^2 f(x^\star)$ is positive definite, then choosing $\alpha_1=\cdots=\alpha_d=\frac{1}{2}$, we have:
$$ \frac{1}{t} f(t^{1/2} h) \underset{t \rightarrow 0}{\longrightarrow} \frac{1}{2} h^T \cdot \nabla^2f(x^\star) \cdot h :=g(x) .$$
And using an orthogonal change of variable:
$$ \int_{\mathbb{R}^d} e^{-g(x)}dx = \int_{\mathbb{R}^d} e^{-\frac{1}{2} \sum_{i=1}^d \beta_i y_i^2} dy_1\ldots dy_d < \infty ,$$
where the eigenvalues $\beta_i$ are positive. However, if $\nabla^2f(x^\star)$ is not positive definite, then some of the $\beta_i$ are zero and the integral does not converge.

\subsection{Statement of the problem}
\label{subsection:statement_of_problem}

We still consider the function $f : \mathbb{R}^d \rightarrow \mathbb{R}$ and assume that $f \in \mathscr{A}_p^\star(x^\star)$ for some $x^\star \in \mathbb{R}^d$ and some integer $p \ge 1$. Then our objective is to find $\alpha_1 \ge \cdots \ge \alpha_d \in (0,+\infty)$ and an orthogonal transformation $B \in \mathcal{O}_d(\mathbb{R})$ such that:
\begin{equation}
\label{eq:alpha_developpement}
\forall h \in \mathbb{R}^d, \ \ \frac{1}{t} \left[ f(x^\star + B \cdot (t^\alpha \ast h)) - f(x^\star) \right] \underset{t \rightarrow 0}{\longrightarrow} g(h_1,\ldots,h_d),
\end{equation}
where $t^\alpha$ denotes the vector $(t^{\alpha_1},\ldots,t^{\alpha_d})$ and where $g : \mathbb{R}^d \rightarrow \mathbb{R}$ is a measurable function which is not constant in any $h_1, \ \ldots, \ h_d$, i.e. for all $i \in \lbrace 1,\ldots,d \rbrace$, there exist $h_1, \ \ldots, \ h_{i-1}, \ h_{i+1},\ \ldots, \ h_d \in \mathbb{R}^d$ such that
\begin{equation}
\label{eq:def_non_constant}
h_i \mapsto g(h_1,\ldots,h_d) \text{ is not constant.}
\end{equation}
Then we say that $\alpha_1, \ \ldots, \ \alpha_d$, $B$ and $g$ are a solution of the problem \eqref{eq:alpha_developpement}.
The hypothesis that $g$ is not constant in any of its variables is important ; otherwise, we could simply take $\alpha_1 = \cdots = \alpha_d = 1$ and obtain, by the first order condition:
$$ \frac{1}{t} \left[ f(x^\star + t(h_1,\ldots,h_d)) - f(x^\star) \right] \underset{t \rightarrow 0}{\longrightarrow} 0 .$$

\subsection{Main results : rate of convergence of Gibbs measures}
\label{section:main_result}

\begin{theorem}[Single well case]
\label{theorem:single_well}
Let $f : \mathbb{R}^d \rightarrow \mathbb{R}$ be $\mathcal{C}^{2p}$ with $p \in \mathbb{N}$ and such that:
\begin{enumerate}
\item $f$ is coercive, i.e. $f(x) \longrightarrow + \infty$ when $||x|| \rightarrow + \infty$.
\item $\textup{argmin}(f) = {0}$.
\item $f \in \mathscr{A}_p^\star(0)$ and $f(0)=0$.
\item $e^{-f} \in L^1(\mathbb{R}^d)$.
\end{enumerate}
Let $(E_k)_k$, $(\alpha_i)_i$, $B$ and $g$ to be defined as in Algorithm \ref{algo:algorithm} stated right after, so that for all $h \in \mathbb{R}^d$,
\begin{equation*}
\frac{1}{t} \left[ f\left( x^\star + B \cdot (t^\alpha \ast h) \right) - f(x^\star) \right] \underset{t \rightarrow 0}{\longrightarrow} g(h) ,
\end{equation*}
and where $g$ is not constant in any of its variables. Moreover, assume that $g$ is coercive and the following technical hypothesis if $p \ge 5$:
\begin{align}
\label{equation:even_terms_null}
&\forall h \in \mathbb{R}^d, \ \forall (i_1,\ldots,i_p) \in \lbrace 0,2,\cdots,2p \rbrace^p, \\
& \frac{i_1}{2} + \cdots + \frac{i_p}{2p} < 1 \implies \ \nabla^{i_1+\cdots+i_p}f(x^\star) \cdot p_{_{E_1}}(h)^{\otimes i_1} \otimes \cdots \otimes p_{_{E_{p}}}(h)^{\otimes i_{p}} = 0 . \nonumber
\end{align}
Then the conclusion of Theorem \ref{theorem:athreya:1} holds, with:
$$ \left(\frac{1}{t^{\alpha_1}}, \ldots, \frac{1}{t^{\alpha_d}}\right) \ast (B^{-1} \cdot X_t) \overset{\mathscr{L}}{\longrightarrow} X  \ \text{ as } t \rightarrow 0,$$
where $X$ has a density proportional to $e^{-g(x)}$.
\end{theorem}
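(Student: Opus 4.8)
The strategy is to verify that the four hypotheses of Theorem~\ref{theorem:athreya:1} are satisfied by the function $\tilde f(h) := f(x^\star + B\cdot(t^\alpha \ast h))$ — or rather, after absorbing the orthogonal change of basis $B$, by $h \mapsto f(x^\star + B\cdot h)$ — with the exponents $\alpha_1,\ldots,\alpha_d$ produced by Algorithm~\ref{algo:algorithm}. Hypotheses~1 and~2 of Theorem~\ref{theorem:athreya:1} are immediate: an orthogonal change of variables preserves $L^1$-membership of $e^{-f}$, and the coercivity of $f$ together with $\mathrm{argmin}(f)=\{0\}$ gives, via the Remark after Theorem~\ref{theorem:athreya:1}, that $\inf\{f(x):\|x\|>\delta\}>0$ for every $\delta>0$. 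So the real content is to establish Hypothesis~3 (the pointwise convergence to $g$) and Hypothesis~4 (the uniform integrability / domination bound), and then to translate the conclusion \eqref{equation:athreya_assumption:3} back through $B$ and the rescaling, which is bookkeeping.

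Hypothesis~3 is exactly the statement \eqref{eq:alpha_developpement} that Algorithm~\ref{algo:algorithm} is designed to produce, so by the hypotheses of the theorem we already have the pointwise convergence
$$ \frac{1}{t}\bigl[ f(x^\star + B\cdot(t^\alpha\ast h)) - f(x^\star)\bigr] \xrightarrow[t\to 0]{} g(h); $$
this rests on the expansion results of Section~\ref{section:expansion}, and when $p\ge 5$ it uses precisely the extra assumption \eqref{equation:even_terms_null} that the offending even cross terms vanish (the phenomenon flagged in Section~\ref{section:order_10}). That $g$ is not constant in any of its variables is guaranteed by Section~\ref{subsec:unif_non_constant}, and hence $e^{-g}\in L^1(\mathbb{R}^d)$ by the conclusion of Theorem~\ref{theorem:athreya:1} itself, provided Hypothesis~4 holds. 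So the crux is Hypothesis~4.

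For the domination bound I would proceed as follows. Write $f(x^\star + B\cdot(t^\alpha\ast h)) = P(t^\alpha\ast h) + R(t^\alpha\ast h)$ where $P$ is the order-$2p$ Taylor polynomial of $f\circ(x^\star + B\,\cdot)$ at $0$ and $R$ is the Taylor remainder, $R(u) = \|u\|^{2p}o(1)$ as $u\to 0$. By the nested structure of the $\alpha_i$ and the non-negativity of the Taylor polynomial (the strict polynomial minimum condition \eqref{eq:polynomial_strict}), one shows that on a fixed ball $\|h\|\le \rho$ one has $\frac{1}{t}P(t^\alpha\ast h) \ge c\, g_0(h)$ for small $t$, where $g_0$ is a coercive non-negative polynomial dominating $g$ from below up to a constant — here the coercivity assumption on $g$ is essential — while the remainder contributes $\frac{1}{t}|R(t^\alpha\ast h)| \le \tfrac12 c\, g_0(h)$ for $\|h\|\le \rho$ and $t$ small, since each monomial of $R$ of degree $>2p$ picks up a strictly positive power of $t$ after the substitution $h\mapsto t^\alpha\ast h$ (by minimality of $p$ in the definition of the order and the choice $\alpha_i\le 1/2$). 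Thus $\sup_{0<t<1} e^{-f(x^\star+B\cdot(t^\alpha\ast h))/t} \le e^{-\frac{c}{2} g_0(h)}$ on $\{\|h\|\le\rho\}$, which is integrable there; outside the ball, for $\|h\|>\rho$ one uses Hypothesis~2 — namely $f(x^\star + B\cdot(t^\alpha\ast h)) \ge m(t) := \inf\{f(x):\|x\|\ge \delta(t)\}$ for an appropriate $\delta(t)$ — together with coercivity of $f$ to get an integrable dominating function on the complement as well. Assembling the two pieces gives Hypothesis~4.

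\textbf{Main obstacle.} The delicate point is the passage from the algebraic output of Algorithm~\ref{algo:algorithm} to the \emph{analytic} domination estimate on $\{\|h\|\le\rho\}$: one must control the Taylor remainder uniformly in $t$ after the anisotropic rescaling $h\mapsto t^\alpha\ast h$ and show that it is genuinely negligible against the leading coercive polynomial $g_0$, for \emph{all} $h$ in a fixed ball (not just near $0$). This requires carefully tracking which monomials of the full Taylor polynomial survive the rescaling with exponent exactly $1$ (these assemble into $g$) versus those that survive with a strictly positive exponent (these form the vanishing remainder), and for the latter exploiting that the nested exponents $\alpha_i$ from the algorithm are chosen minimally so that no cancellation pushes a would-be-negligible term down to exponent $1$. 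Coercivity of $g$ cannot be dropped here, which is why the theorem assumes it; the non-coercive situation is exactly what Section~\ref{section:non_coercive} has to treat separately.
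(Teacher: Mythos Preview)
Your overall plan---reduce to Theorem~\ref{theorem:athreya:1} after the orthogonal change of basis $B$---is the right one, and your treatment of Hypotheses~1--3 is fine. The gap is in Hypothesis~4, specifically in the region $\{\|h\|>\rho\}$. Your bound there reads $f(x^\star+B\cdot(t^\alpha\ast h))\ge m(t):=\inf\{f(x):\|x\|\ge\delta(t)\}$; but for $\|h\|>\rho$ and $t$ small one only has $\|t^\alpha\ast h\|\ge t^{1/2}\|h\|$, so $\delta(t)\asymp t^{1/2}\rho\to 0$, and since $f(0)=0$ the infimum $m(t)$ tends to $0$. Near the origin $f$ behaves like its order-$2p$ Taylor polynomial, so $m(t)\asymp t^{p}$ and $m(t)/t\to 0$ whenever $p\ge 2$: the lower bound $e^{-m(t)/t}$ is then close to $1$ and cannot serve as an integrable majorant on the unbounded set $\{\|h\|>\rho\}$. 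The partition you want is not on $\|h\|$ but on $\|t^\alpha\ast h\|$, and even then the tail of $f$ at infinity is not controlled by the hypotheses (only $e^{-f}\in L^1$ and coercivity are assumed, with no growth rate), so Hypothesis~4 may genuinely fail for $f$ itself.

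The paper sidesteps this by \emph{not} verifying Hypothesis~4 for $f$. Instead it replaces $f$ by
\[
\tilde f(x)=\begin{cases} f(x), & x\in\mathcal{B}(0,\delta),\\ \|x\|^2, & x\notin\mathcal{B}(0,\delta),\end{cases}
\]
and checks Hypothesis~4 for $\tilde f$: on $\{\|t^\alpha\ast h\|\ge\delta\}$ one has $\tilde f(t^\alpha\ast h)/t=\|t^\alpha\ast h\|^2/t\ge\|h\|^2$ (using $\alpha_i\le 1/2$), while on $\{\|t^\alpha\ast h\|<\delta\}$ the uniform estimate from Section~\ref{subsec:unif_non_constant} gives $\tilde f(t^\alpha\ast h)/t\ge g(h)-\varepsilon$; both majorants are integrable (the second via Proposition~\ref{prop:coercive}). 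Theorem~\ref{theorem:athreya:1} then applies to $\tilde f$, yielding the limit law for $\tilde X_t\sim \tilde C_t e^{-\tilde f/t}$. The final step is a separate comparison argument, using Proposition~\ref{proposition:gibbs}, to show that $X_t$ and $\tilde X_t$ have the same weak limit after rescaling: one writes $\mathbb{E}\varphi(\cdot)$ as an integral, splits over $\mathcal{B}(0,\delta)$ and its complement, and uses $\pi_t(\mathcal{B}(0,\delta)^c)\to 0$ together with $\tilde C_t/C_t\to 1$. This truncation-plus-comparison device is the missing idea in your proposal.
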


\begin{algorithm}
\label{algo:algorithm}
Let $f \in \mathscr{A}_p^\star(x^\star)$ for $p \in \mathbb{N}$.
\begin{enumerate}
\item Define $(F_k)_{0 \le k \le p-1}$ recursively as:
$$ \left\lbrace \begin{array}{l}
F_0 = \mathbb{R}^d \\
F_k = \lbrace h \in F_{k-1} : \ \forall h' \in F_{k-1}, \ \nabla^{2k} f(x^\star) \cdot h \otimes h'^{\otimes 2k-1} = 0 \rbrace.
\end{array} \right. $$

\item  For $1 \le k \le p-1$, define the subspace $E_k$ as the orthogonal complement of $F_k$ in $F_{k-1}$. By abuse of notation, define $E_p := F_{p-1}$.

\item Define $B \in \mathcal{O}_d(\mathbb{R})$ as an orthogonal transformation adapted to the decomposition
$$ \mathbb{R}^d = E_1 \oplus \cdots \oplus E_p .$$
\item Define for $1 \le i \le d$,
\begin{equation}
\alpha_i := \frac{1}{2j} \ \text{ for } i \in \lbrace \dim(E_1)+\cdots+\dim(E_{j-1})+1, \ldots, \dim(E_1)+\cdots+\dim(E_j) \rbrace .
\end{equation}
\item Define $g : \mathbb{R}^d \rightarrow \mathbb{R}$ as
\begin{equation}
\label{eq:def_g}
g(h) = \sum_{k=2}^{2p} \frac{1}{k!} \sum_{\substack{i_1,\ldots,i_p \in \lbrace 0,\ldots,k \rbrace \\ i_1 + \cdots + i_{p} = k \\ \frac{i_1}{2} + \cdots + \frac{i_p}{2p}=1 }} \binom{k}{i_1,\ldots,i_p} \nabla^k f(x^\star) \cdot p_{_{E_1}}(B \cdot h)^{\otimes i_1} \otimes \cdots \otimes p_{_{E_p}}(B \cdot h)^{\otimes i_p} .
\end{equation}
\end{enumerate}
\end{algorithm}

\textbf{Remarks :}
\begin{enumerate}
	\item The function $g$ is not unique, as we can choose any base $B$ adapted to the decomposition $\mathbb{R}^d = E_1 \oplus \cdots \oplus E_p$.
	\item The case $p \ge 5$ is fundamentally different from the case $p \le 4$, since Algorithm \ref{algo:algorithm} may fail to provide such $(E_k)_k$, $(\alpha_i)_i$, $B$ and $g$ if the technical hypothesis \eqref{equation:even_terms_null} is not fulfilled, as explained in Section \ref{section:order_10}. This yields fewer results for the case $p \ge 5$.
	\item For $p \in \lbrace 1,2,3,4 \rbrace$, the detail the expression of $g$ in \eqref{equation:order_2}, \eqref{equation:order_4}, \eqref{equation:order_6} and \eqref{equation:order_8:2} respectively.
	\item The function $g$ has the following general properties : $g$ is a non-negative polynomial of order $2p$; $g(0)=0$ and $\nabla g(0) = 0$.
	\item The condition on $g$ to be coercive may seem not natural. We give more details about the case where $g$ is not coercive in Section \ref{section:non_coercive} and give a way to deal with the simple generic case of non-coercivity. However dealing with the general case where $g$ is not coercive goes beyond the scope of our work.
	\item The hypothesis that $g$ is coercive is a necessary condition for $e^{-g} \in L^1(\mathbb{R}^d)$. We actually prove in Proposition \ref{prop:coercive} that it is a sufficient condition.
\end{enumerate}

\medskip

Still following \cite{athreya2010}, we study the multiple well case, i.e. the global minimum is attained in a finite number of points in $\mathbb{R}^d$, say $\lbrace x_1^\star,\ldots,x_m^\star \rbrace$ for some $m \in \mathbb{N}$. In this case, the limiting measure of $\pi_t$ will have its support in $\lbrace x_1^\star,\ldots,x_m^\star \rbrace$, with different weights.

\begin{theorem}[Athreya-Hwang, 2010]
\label{theorem:athreya:2}
Let $f:\mathbb{R}^d \rightarrow [0,\infty)$ measurable such that:
\begin{enumerate}
	\item $e^{-f} \in L^1(\mathbb{R}^d)$.
	\item For all $\delta > 0$, $\inf \lbrace f(x), \ ||x - x_i^\star|| > \delta, \ 1 \le i \le m \rbrace > 0$.
	\item There exist $(\alpha_{ij})_{\substack{1\le i \le m \\ 1 \le j \le d}}$ such that for all $i$, $j$, $\alpha_{ij} \ge 0$ and for all $i$:
	$$ \frac{1}{t} f(x_i^\star + (t^{\alpha_{i1}}h_1,\ldots,t^{\alpha_{id}}h_d)) \underset{t \rightarrow 0}{\longrightarrow} g_i(h_1,\ldots,h_d) \in [0,\infty) .$$
	\item For all $i \in \lbrace 1, \ldots, m \rbrace$,
	$$ \int_{\mathbb{R}^d} \sup_{0<t<1} e^{-\frac{f(x_i^\star + (t^{\alpha_{i1}}h_1,\ldots,t^{\alpha_{id}}h_d))}{t}}dh_1\ldots dh_d < \infty .$$
\end{enumerate}
Then, let $\alpha := \min_{1 \le i \le m} \left\lbrace \sum_{j=1}^d \alpha_{ij} \right\rbrace $ and let $J := \left\lbrace i \in \lbrace 1,\ldots,m \rbrace : \ \sum_{j=1}^d \alpha_{ij} = \alpha \right\rbrace$. For $0 < t < 1$, let $X_t$ be a random vector with distribution $\pi_t$. Then:
$$ X_t \overset{\mathscr{L}}{\underset{t \rightarrow 0}{\longrightarrow}} \frac{1}{\sum_{j \in J} \int_{\mathbb{R}^d} e^{-g_j(x)}dx} \sum_{i \in J} \int_{\mathbb{R}^d} e^{-g_i(x)}dx \cdot \delta_{x_i^\star} .$$
\end{theorem}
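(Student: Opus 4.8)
The plan is to prove a Laplace-type asymptotic for the normalising constant $C_t^{-1}=\int_{\mathbb{R}^d}e^{-f/t}\,dx$ and then transfer it to the integral $C_t\int\phi\,e^{-f/t}\,dx$ against an arbitrary bounded continuous test function $\phi$. Fix $\delta_0>0$ smaller than half the least distance between two distinct wells, so that the balls $\mathcal{B}(x_i^\star,\delta)$, $1\le i\le m$, are pairwise disjoint for every $\delta\le\delta_0$, and write, for such a $\delta$,
$$ C_t^{-1}=\sum_{i=1}^m \int_{\mathcal{B}(x_i^\star,\delta)}e^{-f/t}\,dx + \int_{R_\delta}e^{-f/t}\,dx, \qquad R_\delta:=\mathbb{R}^d\setminus\bigcup_{i=1}^m\mathcal{B}(x_i^\star,\delta). $$
By hypothesis 2 there is $c_\delta>0$ with $f\ge c_\delta$ on $R_\delta$, so for $0<t<1$ we have $e^{-f/t}=e^{-f}e^{-f(1/t-1)}\le e^{-f}e^{-c_\delta(1/t-1)}$ on $R_\delta$, whence $\int_{R_\delta}e^{-f/t}\,dx=O\!\big(e^{-c_\delta/t}\big)$ by hypothesis 1; this is negligible compared with any power of $t$.

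For each well, apply the change of variables $x=x_i^\star+\big(t^{\alpha_{i1}}h_1,\dots,t^{\alpha_{id}}h_d\big)$, which is a linear bijection of $\mathbb{R}^d$ with Jacobian $t^{\beta_i}$, where $\beta_i:=\sum_{j=1}^d\alpha_{ij}$, to get
$$ \int_{\mathcal{B}(x_i^\star,\delta)}e^{-f/t}\,dx = t^{\beta_i}\int_{\mathbb{R}^d} \mathds{1}_{D_{i,t,\delta}}(h)\, e^{-f(x_i^\star+(t^{\alpha_{ij}}h_j)_j)/t}\,dh, $$
with $D_{i,t,\delta}=\{h:\|(t^{\alpha_{ij}}h_j)_j\|<\delta\}$ increasing to $\mathbb{R}^d$ as $t\downarrow0$ (one uses $\alpha_{ij}>0$; a zero exponent would only localise the corresponding coordinate and change the bookkeeping). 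By hypothesis 3 the integrand converges pointwise to $e^{-g_i(h)}$, and by hypothesis 4 it is dominated, uniformly in $t\in(0,1)$, by the fixed integrable function $h\mapsto\sup_{0<t<1}e^{-f(x_i^\star+(t^{\alpha_{ij}}h_j)_j)/t}$; the same bound taken pointwise gives $\int_{\mathbb{R}^d}e^{-g_i}<\infty$, while $\int_{\mathbb{R}^d}e^{-g_i}>0$ since $g_i$ is finite-valued. Dominated convergence then yields $\int_{\mathcal{B}(x_i^\star,\delta)}e^{-f/t}\,dx=t^{\beta_i}\big(\int_{\mathbb{R}^d}e^{-g_i}+o(1)\big)$. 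Summing over $i$ and isolating the smallest exponent $\alpha=\min_i\beta_i$, with $J=\{i:\beta_i=\alpha\}$, we obtain $C_t^{-1}=t^{\alpha}\big(\sum_{j\in J}\int_{\mathbb{R}^d}e^{-g_j}+o(1)\big)$; in particular $C_t\,t^{\beta_i}\to0$ for $i\notin J$ and $C_t\,t^{\alpha}\to\big(\sum_{j\in J}\int_{\mathbb{R}^d}e^{-g_j}\big)^{-1}$.

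Finally, fix $\phi$ bounded and continuous and run the same decomposition on $\mathbb{E}[\phi(X_t)]=C_t\int_{\mathbb{R}^d}\phi\,e^{-f/t}\,dx$. The $R_\delta$-contribution is $O(\|\phi\|_\infty e^{-c_\delta/t})$, hence vanishes after multiplication by $C_t$. In the ball around $x_i^\star$, after the same rescaling, $\phi(x_i^\star+(t^{\alpha_{ij}}h_j)_j)\to\phi(x_i^\star)$ pointwise in $h$, and the product is still dominated by $\|\phi\|_\infty\sup_{0<t<1}e^{-f(\cdots)/t}\in L^1$, so dominated convergence gives $\int_{\mathcal{B}(x_i^\star,\delta)}\phi\,e^{-f/t}\,dx=t^{\beta_i}\big(\phi(x_i^\star)\int_{\mathbb{R}^d}e^{-g_i}+o(1)\big)$. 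Multiplying by $C_t$ and letting $t\to0$, the wells with $\beta_i>\alpha$ drop out and
$$ \mathbb{E}[\phi(X_t)]\ \longrightarrow\ \frac{\sum_{i\in J}\phi(x_i^\star)\int_{\mathbb{R}^d}e^{-g_i}}{\sum_{j\in J}\int_{\mathbb{R}^d}e^{-g_j}} = \int_{\mathbb{R}^d}\phi\, d\Big(\tfrac{1}{\sum_{j\in J}\int_{\mathbb{R}^d}e^{-g_j}}\sum_{i\in J}\int_{\mathbb{R}^d}e^{-g_i}\,\delta_{x_i^\star}\Big), $$
which is exactly the claimed weak convergence. The main obstacle is not the bookkeeping of exponents but the justification of the interchange of limit and integral near each well, uniformly in the (arbitrarily small but fixed) radius $\delta$: this is precisely what hypothesis 4 is designed for, since it furnishes a single $t$-independent integrable majorant, and one must also carefully verify that the rescaled domains $D_{i,t,\delta}$ exhaust $\mathbb{R}^d$ and that the leftover region $R_\delta$ is exponentially negligible so that the value of $\delta$ never enters the limit.
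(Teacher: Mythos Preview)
The paper does not supply its own proof of this theorem: it is quoted verbatim from \cite{athreya2010} and then invoked as a black box in the proof of Theorem \ref{theorem:multiple_well}. There is therefore nothing in the paper to compare against.

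Your argument is the standard Laplace-type proof and is correct. You localise near the wells, control the remainder on $R_\delta$ via $e^{-f/t}\le e^{-c_\delta(1/t-1)}e^{-f}$ (hypotheses 1 and 2), rescale in each ball to pull out the Jacobian factor $t^{\beta_i}$, and pass to the limit by dominated convergence using the $t$-uniform majorant of hypothesis 4 together with the pointwise limit of hypothesis 3. Sorting the contributions by the smallest exponent $\alpha=\min_i\beta_i$ then gives the asymptotic of $C_t^{-1}$, and the same computation against a bounded continuous $\phi$ yields the weak limit. This is exactly the route taken in the original reference.

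The only point you leave slightly open is the boundary case $\alpha_{ij}=0$: you correctly observe that then $D_{i,t,\delta}$ does not exhaust $\mathbb{R}^d$ in the $j$th coordinate, so the dominated-convergence step as written does not literally produce $\int_{\mathbb{R}^d}e^{-g_i}$. In the paper's applications all exponents are of the form $1/(2k)>0$ (see Algorithm \ref{algo:algorithm}), so this edge case never arises there; for the general statement one would need an extra line of argument (or simply assume $\alpha_{ij}>0$, which is how the single-well Theorem \ref{theorem:athreya:1} is stated).
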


\begin{theorem}[Multiple well case]
\label{theorem:multiple_well}
Let $f : \mathbb{R}^d \rightarrow \mathbb{R}$ be $\mathcal{C}^{2p}$ for $p \in \mathbb{N}$ and such that:
\begin{enumerate}
	\item $f$ is coercive i.e. $f(x) \longrightarrow + \infty$ when $||x||\rightarrow +\infty$.
	\item $\argmin(f) = \lbrace x_1^\star, \ldots, x_m^\star \rbrace$ and for all $i$, $f(x_i^\star)=0$.
	\item For all $i \in \lbrace 1, \ldots, m \rbrace$, $f \in \mathscr{A}^\star_{p_i}(x_i^\star)$ for some $p_i \le p$.
	\item $e^{-f} \in L^1(\mathbb{R}^d)$.	
\end{enumerate}
Then, for every $i \in \lbrace 1, \ldots, m \rbrace$, we consider $(E_{ik})_k$, $(\alpha_{ij})_j$, $B_i$ and $g_i$ as defined in Algorithm \ref{algo:algorithm}, where we consider $f$ to be in $\mathscr{A}_{p_i}^\star(x_i^\star)$, so that for every $h \in \mathbb{R}^d$:
$$ \frac{1}{t} f(x_i^\star + B_i \cdot (t^{\alpha_i} \ast h)) \underset{t \rightarrow 0}{\longrightarrow} g_i(h_1,\ldots,h_d) \in [0,\infty) ,$$
where $t^{\alpha_i}$ is the vector $(t^{\alpha_{i1}},\ldots, t^{\alpha_{id}})$ and where $g_i$ is not constant in any of its variables. Furthermore, we assume that for all $i$, $g_i$ is coercive and the following technical hypothesis for every $i$ such that $p_i \ge 5$:
\begin{align*}
& \forall h \in \mathbb{R}^d, \ \forall (i_1,\ldots,i_{p_i}) \in \lbrace 0,2,\ldots,2{p_i} \rbrace^{p_i}, \\
& \frac{i_1}{2} + \cdots + \frac{i_{p_i}}{2p} < 1 \implies \ \nabla^{i_1+\cdots+i_{p_i}}f(x_i^\star) \cdot p_{_{E_{i1}}}(h)^{\otimes i_1} \otimes \cdots \otimes p_{_{E_{ip_i}}}(h)^{\otimes i_{p_i}} = 0 . \nonumber
\end{align*}
Let $\alpha := \min_{1 \le i \le m} \left\lbrace \sum_{j=1}^d \alpha_{ij} \right\rbrace $ and let $J := \left\lbrace i \in \lbrace 1,\ldots,m \rbrace : \ \sum_{j=1}^d \alpha_{ij} = \alpha \right\rbrace$. Then:
$$ X_t \underset{t \rightarrow 0}{\longrightarrow} \frac{1}{\sum_{j \in J} \int_{\mathbb{R}^d} e^{-g_j(x)}dx} \sum_{i \in J} \int_{\mathbb{R}^d} e^{-g_i(x)}dx \cdot \delta_{x_i^\star} .$$
Moreover, let $\delta > 0$ be small enough so that the balls $\mathcal{B}(x_i^\star,\delta)$ are disjoint, and define the random vector $X_{it}$ to have the law of $X_t$ conditionally to the event $||X_t - x_i^\star||< \delta$. Then:
$$\left(\frac{1}{t^{\alpha_{i1}}}, \ldots,\frac{1}{t^{\alpha_{id}}}\right) \ast (B_i^{-1} \cdot X_{it}) \overset{\mathscr{L}}{\longrightarrow} X_i  \ \text{ as } t \rightarrow 0 ,$$
where $X_i$ has a density proportional to $e^{-g_i(x)}$.
\end{theorem}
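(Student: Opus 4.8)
The plan is to deduce the statement from the single--well analysis already carried out (Theorem~\ref{theorem:single_well}, Algorithm~\ref{algo:algorithm} and the expansion computations of Section~\ref{section:expansion}) together with a localization argument around the wells. Without loss of generality assume $f(x_i^\star)=0$ for all $i$; since $\operatorname{argmin}(f)=\{x_1^\star,\ldots,x_m^\star\}$ this forces $f\ge 0$. Fix $\delta>0$ small enough that the closed balls $\overline{\mathcal B(x_i^\star,\delta)}$ are pairwise disjoint, put $B_\delta:=\bigcup_{i=1}^m\mathcal B(x_i^\star,\delta)$ and $m_\delta:=\inf_{B_\delta^c}f$. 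Coercivity and continuity of $f$, together with $\operatorname{argmin}(f)\subset B_\delta$, give $m_\delta>0$, so Proposition~\ref{proposition:gibbs} yields $\pi_t(B_\delta^c)\to 0$; moreover, for $t\le 1/2$ one has the crude bound $\int_{B_\delta^c}e^{-f/t}\le e^{-m_\delta/(2t)}\int_{\mathbb R^d}e^{-f/(2t)}\le e^{-m_\delta/(2t)}\int_{\mathbb R^d}e^{-f}$, which makes the complement of the wells exponentially negligible against the polynomial orders produced below.

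On each ball I would run the single--well machinery. The pointwise convergence $\frac1t f(x_i^\star+B_i\cdot(t^{\alpha_i}\ast h))\to g_i(h)$ is exactly what Algorithm~\ref{algo:algorithm} and Section~\ref{section:expansion} provide under hypotheses~1--4 and the technical assumptions of the theorem (and it is recorded in the statement), $e^{-g_i}\in L^1(\mathbb R^d)$ follows from coercivity of $g_i$ via Proposition~\ref{prop:coercive}, and $I_i:=\int_{\mathbb R^d}e^{-g_i}\in(0,\infty)$. Since $B_i$ is orthogonal, the change of variables $x=x_i^\star+B_i\cdot(t^{\alpha_i}\ast h)$ has Jacobian $\prod_{j=1}^d t^{\alpha_{ij}}=t^{S_i}$ with $S_i:=\sum_{j=1}^d\alpha_{ij}$, so for any $\varphi\in C_b(\mathbb R^d)$,
\[
\int_{\mathcal B(x_i^\star,\delta)}\varphi(x)\,e^{-f(x)/t}\,dx=t^{S_i}\int_{\{\|t^{\alpha_i}\ast h\|<\delta\}}\varphi\bigl(x_i^\star+B_i\cdot(t^{\alpha_i}\ast h)\bigr)\,e^{-f(x_i^\star+B_i\cdot(t^{\alpha_i}\ast h))/t}\,dh .
\]
As $t\to 0$ the integrand converges pointwise to $\varphi(x_i^\star)e^{-g_i(h)}$ and is dominated by the integrable majorant furnished by hypothesis~4 of Theorem~\ref{theorem:athreya:1} in the single--well proof (the ball restriction is convenient here: on $\mathcal B(x_i^\star,\delta)$ the function $f$ is bounded below by the non-negative polynomial witnessing the strict polynomial minimum). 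Dominated convergence gives $\int_{\mathcal B(x_i^\star,\delta)}\varphi\,e^{-f/t}=t^{S_i}\bigl(\varphi(x_i^\star)I_i+o(1)\bigr)$.

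Assembling the pieces with $\alpha=\min_i S_i$ and $J=\{i:S_i=\alpha\}$,
\[
\int_{\mathbb R^d}\varphi\,e^{-f/t}=\sum_{i=1}^m t^{S_i}\bigl(\varphi(x_i^\star)I_i+o(1)\bigr)+O\bigl(e^{-m_\delta/(2t)}\bigr)=t^{\alpha}\Bigl(\sum_{i\in J}\varphi(x_i^\star)I_i+o(1)\Bigr),
\]
and $\varphi\equiv 1$ gives $\int e^{-f/t}=t^{\alpha}\bigl(\sum_{i\in J}I_i+o(1)\bigr)$; dividing yields $\int\varphi\,d\pi_t\to\bigl(\sum_{i\in J}I_i\bigr)^{-1}\sum_{i\in J}\varphi(x_i^\star)I_i$ for every $\varphi\in C_b(\mathbb R^d)$, which is the asserted weak convergence of $X_t$ (one could alternatively invoke Theorem~\ref{theorem:athreya:2} directly, but verifying its global domination hypothesis is essentially this same computation). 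For the refined statement, observe that the law of $X_{it}$ is the Gibbs measure of the Borel function $f_i^\delta$ equal to $f$ on $\mathcal B(x_i^\star,\delta)$ and to $+\infty$ elsewhere; this $f_i^\delta$ meets the hypotheses of Theorem~\ref{theorem:athreya:1} with data $(\alpha_{ij})_j$, $B_i$, $g_i$ (hypothesis~2 because $x_i^\star$ is the only zero of $f$ in $\overline{\mathcal B(x_i^\star,\delta)}$, hypothesis~3 because $x_i^\star+B_i\cdot(t^{\alpha_i}\ast h)$ lies in the ball for $t$ small, hypothesis~4 as above), so applying it exactly as Theorem~\ref{theorem:single_well} is deduced from Theorem~\ref{theorem:athreya:1} (i.e. with the orthogonal change of variables $B_i$) gives $\bigl(t^{-\alpha_{i1}},\ldots,t^{-\alpha_{id}}\bigr)\ast\bigl(B_i^{-1}\cdot(X_{it}-x_i^\star)\bigr)\overset{\mathscr L}{\longrightarrow}X_i$ with density proportional to $e^{-g_i}$.

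The main obstacle is the uniform domination needed to push the limit through the rescaled integrals, i.e. hypothesis~4 of Theorem~\ref{theorem:athreya:1} for the localized problems; but after $f$ is cut down to a small ball around $x_i^\star$ this is no harder than in the single--well case and is supplied by the proof of Theorem~\ref{theorem:single_well} in Section~\ref{section:proofs_athreya}. Everything else is the bookkeeping of the competing polynomial scales $t^{S_i}$ and the exponential smallness of $B_\delta^c$.
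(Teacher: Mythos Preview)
Your proposal is correct and follows the same localization-plus-single-well strategy as the paper, but the execution differs in two places worth flagging.

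For the first conclusion (the weighted Dirac limit), the paper simply writes ``direct application of Theorem~\ref{theorem:athreya:2}''. You instead carry out the Laplace asymptotics by hand: change variables on each ball, dominate, sum, divide. You correctly observe that invoking Theorem~\ref{theorem:athreya:2} would still require checking its global domination hypothesis~4 at each well, and that this check is the very computation you perform; so your route is more self-contained while the paper's is terser.

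For the conditional statement, both you and the paper start by noting that $X_{it}$ has Gibbs density for the function $f_i^\delta$ equal to $f$ on $\mathcal B(x_i^\star,\delta)$ and $+\infty$ outside. The paper then replaces $f_i^\delta$ by an auxiliary $\widetilde f_i$ equal to $\|x\|^2$ outside the ball, applies Theorem~\ref{theorem:athreya:1} to $\widetilde f_i$, and compares the two Gibbs laws exactly as in Section~\ref{subsec:single_well_proof}. You skip the auxiliary function and apply Theorem~\ref{theorem:athreya:1} directly to $f_i^\delta$; this shortcut is legitimate because $e^{-f_i^\delta/t}$ vanishes identically off the ball, so the supremum in hypothesis~4 is bounded by the same majorant $e^{-g_i+\varepsilon}$ produced in Section~\ref{subsec:single_well_proof} (the $e^{-\|h\|^2}$ piece is simply not needed). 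The paper's extra step buys a function with finite values everywhere, matching the literal statement $f:\mathbb R^d\to[0,\infty)$ of Theorem~\ref{theorem:athreya:1}, whereas your $f_i^\delta$ takes the value $+\infty$; this is cosmetic but worth noting if one is being careful about the codomain.

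One small slip: your final display reads $B_i^{-1}\cdot(X_{it}-x_i^\star)$, which is the correct centering; the theorem as stated omits the $-\,x_i^\star$, presumably a typo inherited from the single-well statement where $x^\star=0$.
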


\section{Expansion of $f$ at a local minimum with degenerate derivatives}
\label{section:expansion}

In this section, we aim at answering to the problem stated in \eqref{eq:alpha_developpement} in order to devise conditions to apply Theorem \ref{theorem:athreya:1}. This problem can also be considered in a more general setting, independently of the study of the convergence of Gibbs measures. It provides a non degenerate higher order nested expansion of $f$ at a local minimum when some of the derivatives of $f$ are degenerate. Note here that we only need $x^\star$ to be a local minimum instead of a global minimum, since we only give local properties.

For $k \le p$, we define the tensor of order $k$, $T_k := \nabla^k f(x^\star)$.

\subsection{Expansion of $f$ for any order $p$}
\label{sec:expansion_any_order}

In this section, we state our result in a synthetic form. The proofs of the cases $p =1,2,3,4$ are individually detailled in Sections \ref{section:order_2}, \ref{section:order_4}, \ref{section:order_6} and \ref{section:order_8} respectively.

\begin{theorem}
\label{theorem:main}
Let $f : \mathbb{R}^d \rightarrow \mathbb{R}$ be $\mathcal{C}^{2p}$ for some $p \in \mathbb{N}$ and assume that $f \in \mathscr{A}_p^\star(x^\star)$ for some $x^\star \in \mathbb{R}^d$.
\begin{enumerate}
\item If $p \in \lbrace 1,2,3,4 \rbrace$, then there exists orthogonal subspaces of $\mathbb{R}^d$, $E_1, \ \ldots, \ E_{p}$ such that
$$ \mathbb{R}^d = E_1 \oplus \cdots \oplus E_{p},$$
and satisfying for every $h \in \mathbb{R}^d$:
\begin{align}
\label{equation:order_p:1}
& \frac{1}{t} \left[ f\left(x^\star + t^{1/2}p_{_{E_1}}(h) + \cdots + t^{1/(2p)}p_{_{E_{p}}}(h) \right) - f(x^\star) \right] \\
\label{equation:order_p:2}
\underset{t \rightarrow 0}{\longrightarrow} & \sum_{k=2}^{2p} \frac{1}{k!} \sum_{\substack{i_1,\ldots,i_p \in \lbrace 0,\cdots,k \rbrace \\ i_1 + \cdots + i_{p} = k \\ \frac{i_1}{2} + \cdots + \frac{i_p}{2p}=1 }} \binom{k}{i_1,\ldots,i_{p}} T_k \cdot p_{_{E_1}}(h)^{\otimes i_1} \otimes \cdots \otimes p_{_{E_{p}}}(h)^{\otimes i_{p}}. 
\end{align}
The convergence is uniform with respect to $h$ on every compact set. Moreover, let $B \in \mathcal{O}_d(\mathbb{R})$ be an orthogonal transformation adapted to the decomposition $E_1 \oplus \cdots \oplus E_{p}$, then
\begin{equation}
\label{equation:order_p:3}
\frac{1}{t} \left[ f\left( x^\star + B \cdot (t^\alpha \ast h) \right) - f(x^\star) \right] \underset{t \rightarrow 0}{\longrightarrow} g(h) ,
\end{equation}
where 
\begin{equation}
\label{eq:def_g:2}
g(h) = \sum_{k=2}^{2p} \frac{1}{k!} \sum_{\substack{i_1,\ldots,i_p \in \lbrace 0,\ldots,k \rbrace \\ i_1 + \cdots + i_{p} = k \\ \frac{i_1}{2} + \cdots + \frac{i_p}{2p}=1 }} \binom{k}{i_1,\ldots,i_{p}} T_k \cdot p_{_{E_1}}(B \cdot h)^{\otimes i_1} \otimes \cdots \otimes p_{_{E_{p}}}(B \cdot h)^{\otimes i_p}
\end{equation}
is not constant in any of its variables $h_1, \ \ldots, \ h_d$ and
\begin{align}
\label{eq:def_alpha:2}
\alpha_i & := \frac{1}{2j} \ \text{ for } i \in \lbrace \dim(E_1)+\cdots+\dim(E_{j-1})+1, \ldots, \dim(E_1)+\cdots+\dim(E_j) \rbrace .
\end{align}

\item If $p \ge 5$ and if there exist orthogonal subspaces of $\mathbb{R}^d$, $E_1, \ \ldots, \ E_{p}$ such that
$$ \mathbb{R}^d = E_1 \oplus \cdots \oplus E_{p}$$
and satisfying the following additional assumption
\begin{align}
\label{equation:even_terms_null:2}
& \forall h \in \mathbb{R}^d, \ \forall (i_1,\ldots,i_p) \in \lbrace 0,2,\ldots,2p \rbrace^{p}, \\
& \frac{i_1}{2} + \cdots + \frac{i_p}{2p} < 1 \implies \ T_{i_1+\cdots+i_p} \cdot p_{_{E_1}}(h)^{\otimes i_1} \otimes \cdots \otimes p_{_{E_{p}}}(h)^{\otimes i_{p}} = 0 , \nonumber
\end{align}
then \eqref{equation:order_p:2} stills holds true, as well as the uniform convergence on every compact set. Moreover, if $B \in \mathcal{O}_d(\mathbb{R})$ is an orthogonal transformation adapted to the previous decomposition, then \eqref{equation:order_p:3} still hold true. However, depending on the function $f$, such subspaces do not necessarily exist.
\end{enumerate}

\end{theorem}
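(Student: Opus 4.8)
The plan is to establish the limit \eqref{equation:order_p:2} by plugging the scaled increment $h_t := t^{1/2}p_{E_1}(h) + \cdots + t^{1/(2p)}p_{E_p}(h)$ into the Taylor--Young expansion of $f$ at $x^\star$ (Theorem \ref{theorem:taylor}) to order $2p$, expanding each term $T_k \cdot h_t^{\otimes k}$ via the Newton multinomial formula \eqref{equation:multinomial}, and then tracking the power of $t$ carried by each resulting monomial. A monomial $T_k \cdot p_{E_1}(h)^{\otimes i_1} \otimes \cdots \otimes p_{E_p}(h)^{\otimes i_p}$ with $i_1 + \cdots + i_p = k$ is multiplied by $t^{i_1/2 + \cdots + i_p/(2p)}$; after dividing by $t$, the contribution is $t^{(i_1/2+\cdots+i_p/(2p)) - 1}$. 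Thus terms with exponent sum $>1$ vanish in the limit, terms with exponent sum $=1$ survive and produce exactly the right-hand side of \eqref{equation:order_p:2}, and the task reduces to showing that every term with exponent sum $<1$ is \emph{zero}, not merely small. The Peano remainder $\|h_t\|^{2p}o(1)$ contributes $t^{\,2p \cdot (1/(2p))}o(1) = o(1)$ after division by $t$ only on the slowest subspace $E_p$; more carefully one bounds $\|h_t\| \le C t^{1/(2p)}$, so the remainder is $t^{2p/(2p)}o(1)/t = o(1)$, which is harmless.

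The heart of the matter is the vanishing of the sub-threshold terms, and here the two cases diverge. For $p \le 4$ (the case treated in the individual Sections \ref{section:order_2}--\ref{section:order_8}, whose conclusions we may invoke), one uses that $F_k = \{h \in F_{k-1} : \nabla^{2k}f(x^\star)\cdot h \otimes h'^{\otimes 2k-1} = 0 \ \forall h' \in F_{k-1}\}$ is genuinely a vector subspace, so $E_k = F_{k-1} \ominus F_k$ is well-defined, and the non-negativity of the order-$2p$ Taylor polynomial of $f$ forces the relevant cross-derivative tensors to annihilate the appropriate products of projections — this is exactly the content of the order-by-order arguments. The key structural input is: if $T_{2k}$ vanishes on $F_{k-1}^{\otimes 2k}$ in the "diagonal" sense defining $F_k$, then by symmetry and a polarization/positivity argument all lower cross terms mixing $E_1, \ldots, E_{k-1}$ with higher subspaces that fall below the threshold must vanish. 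For $p \ge 5$ this mechanism breaks down (Hilbert's $17^{\text{th}}$ problem: a non-negative polynomial need not be a sum of squares, so the zero-set of the low-order derivatives need not be linear and the forced-vanishing of odd cross terms no longer propagates to all even cross terms), which is precisely why we must \emph{assume} \eqref{equation:even_terms_null:2}; under that hypothesis, the very same power-counting argument goes through verbatim, since \eqref{equation:even_terms_null:2} is exactly the statement that all sub-threshold terms (odd ones vanish automatically from the minimum condition as in the Remark after the definition of strict polynomial minimum, even ones by assumption) are zero.

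Having the limit \eqref{equation:order_p:2}, the passage to \eqref{equation:order_p:3} is a change of variables: writing $h = B^{-1} \cdot (\text{something})$, one checks that $B \cdot (t^\alpha \ast h) = t^{1/2} p_{E_1}(B\cdot h) + \cdots + t^{1/(2p)} p_{E_p}(B \cdot h)$ because $B$ is adapted to the decomposition and $\alpha_i = 1/(2j)$ on the block of coordinates mapped into $E_j$; substituting into \eqref{equation:order_p:2} and relabelling gives \eqref{eq:def_g:2}. The uniform convergence on compact sets follows by making the $o(1)$ in Taylor--Young uniform: on a compact $K$, $\|h\|$ is bounded, so $\|h_t\|$ is uniformly $O(t^{1/(2p)})$ and the error estimates are uniform. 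The non-constancy of $g$ in each variable is a separate claim deferred to Section \ref{subsec:unif_non_constant} and need not be re-proved here. The main obstacle, and the place where all the real work sits, is the vanishing of the sub-threshold cross terms for $p \le 4$ — this is genuinely order-dependent and is why the paper devotes a separate section to each of $p = 1,2,3,4$ — together with the honest identification, for $p \ge 5$, that \eqref{equation:even_terms_null:2} is both necessary (via the counterexample of Section \ref{section:order_10}) and sufficient for the argument to close.
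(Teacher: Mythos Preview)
Your plan matches the paper's approach: Taylor--Young to order $2p$, multinomial expansion of $T_k \cdot h_t^{\otimes k}$, power-counting, and then the real work of killing the sub-threshold terms --- deferred for $p\le 4$ to the order-by-order Sections \ref{section:order_2}--\ref{section:order_8}, and for $p\ge 5$ to the hypothesis \eqref{equation:even_terms_null:2}. The remainder estimate, the passage to \eqref{equation:order_p:3} via the adapted basis $B$, and the deferral of uniform convergence and non-constancy to Section \ref{subsec:unif_non_constant} are all exactly as in the paper.

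One point needs correction. For $p\ge 5$ you write that ``odd ones vanish automatically from the minimum condition as in the Remark after the definition of strict polynomial minimum''. That Remark only says that if the full Taylor polynomial of order $2l$ vanishes at some $h$, then so does the order-$(2l{+}1)$ term at the \emph{same} $h$; it does not tell you that an individual cross term $T_k\cdot p_{E_1}(h)^{\otimes i_1}\otimes\cdots\otimes p_{E_p}(h)^{\otimes i_p}$ with some odd $i_j$ vanishes. The paper's actual argument (Section \ref{section:order_10}) is different and more delicate: one lets $\alpha$ be the smallest exponent-sum at which the aggregated coefficient is not identically zero, uses non-negativity of $f$ near $x^\star$ to conclude that the resulting polynomial in the scaling variables $\lambda_1,\ldots,\lambda_p$ is $\ge 0$, then iteratively isolates the leading exponent in each $\lambda_l$ to force every surviving exponent to be even, producing an all-even sub-threshold tuple that contradicts \eqref{equation:even_terms_null:2}. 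So \eqref{equation:even_terms_null:2} is not ``exactly the statement that all sub-threshold terms are zero'' --- it is only the even part, and the odd part is deduced from it via this positivity-plus-scaling contradiction, not from the Remark you cite.
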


\medskip

\textbf{Remarks:}
\begin{enumerate}
	\item The limit \eqref{equation:order_p:2} can be rewritten as:
	$$\sum_{k=2}^{2p} \sum_{\substack{i_1,\ldots,i_p \in \lbrace 0,\cdots,k \rbrace \\ i_1 + \cdots + i_{p} = k \\ \frac{i_1}{2} + \cdots + \frac{i_p}{2p}=1 }} T_k \cdot \frac{p_{_{E_1}}(h)^{\otimes i_1}}{i_1!} \otimes \cdots \otimes \frac{p_{_{E_{p}}}(h)^{\otimes i_{p}}}{i_p!}. $$
	\item For $p \in \lbrace 1,2,3,4 \rbrace$, we explicitly give the expression of the sum \eqref{equation:order_p:2} and the $p$-tuples $(i_1,\ldots,i_p)$ such that $\frac{i_1}{2}+\cdots+\frac{i_p}{2p}=1$, in \eqref{equation:order_2}, \eqref{equation:order_4}, \eqref{equation:order_6} and \eqref{equation:order_8:2} respectively.
	\item For $p \in \lbrace 1,2,3,4 \rbrace$, we give in Algorithm \ref{algo:algorithm} an explicit construction of the orthogonal subspaces $E_1, \ \ldots, \ E_{p}$ as complementaries of annulation sets of some derivatives of $f$.
	\item The case $p \ge 5$ is fundamentally different from the case $p \in \lbrace 1,2,3,4 \rbrace$. The strategy of proof developed for $p \in \lbrace 1,2,3,4 \rbrace$ fails if the assumption \eqref{equation:even_terms_null:2} is not satisfied. In \ref{section:order_10} a counter-example is detailed. The case $p \ge 5$ yields fewer results than for $p \le 4$, as the assumption \eqref{equation:even_terms_null:2} is strong.
	\item For $p \ge 5$, such subspaces $E_1$, $\ldots$, $E_p$ may also be obtained from Algorithm \ref{algo:algorithm}, however \eqref{equation:even_terms_null:2} is not necessarily true in this case.
\end{enumerate}

\medskip

The proof of Theorem \ref{theorem:main} is given first individually for each $p \in \lbrace 1,2,3,4 \rbrace$, in Sections \ref{section:order_2}, \ref{section:order_4}, \ref{section:order_6}, \ref{section:order_8} respectively. The proof for $p \ge 5$ is given in Section \ref{section:order_10}. The proof of the uniform convergence and of the fact that $g$ is not constant is given in Section \ref{subsec:unif_non_constant}.

\subsection{Review of the one dimensional case}

We review the case $d=1$, as it guides us for the proof in the case $d \ge 2$. The strategy is to find the first derivative $f^{(m)}(x^\star)$ which is non zero and then to choose $\alpha_1 = 1/m$.

\begin{proposition}
Let $f : \mathbb{R} \rightarrow \mathbb{R}$ be $\mathcal{C}^p$ for some $p \in \mathbb{N}$ and let $x^\star$ be a strict polynomial local minimum of $f$. Then :
\begin{enumerate}
\item The order of the local minimum $m$ is an even number and $f^{(m)}(x^\star) > 0$.
\item $ f(x^\star + h) \underset{h \rightarrow 0}{=} f(x^\star) + \frac{f^{(m)}(x^\star)}{m!}h^p + o(h^m) $
\end{enumerate}
\end{proposition}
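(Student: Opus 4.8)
The plan is to reduce to $x^\star = 0$ (replacing $f$ by $f(\cdot + x^\star)$) and to read off everything from the lowest-order nonvanishing coefficient of the Taylor polynomial of $f$ at $0$. Write $m \le p$ for the order of the strict polynomial minimum, that is, the least integer such that $P_m(h) := \sum_{k=2}^{m} \frac{f^{(k)}(0)}{k!} h^k$ is $>0$ on some punctured interval $\mathcal{B}(0,r) \setminus \{0\}$, and set $m_0 := \min\{ k \ge 2 : f^{(k)}(0) \ne 0 \}$. Note $m_0$ is well defined with $m_0 \le m$: indeed, for any $l < m_0$ one has $P_l \equiv 0$ (all its coefficients vanish), which is not $>0$ on a punctured neighbourhood, so such an $l$ cannot be the order.

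First I would establish item 1 by a sign argument. The elementary fact I will use is that for a one-variable polynomial $Q$ with $Q(0)=0$, whether $Q>0$ on a punctured neighbourhood of $0$ is decided by its lowest-degree term $c\,h^j$: if $j$ is odd, or if $c<0$, then $Q$ takes negative values arbitrarily close to $0$. For every $l \ge m_0$ the polynomial $P_l$ has lowest-degree term $\frac{f^{(m_0)}(0)}{m_0!} h^{m_0}$; hence if $m_0$ were odd, or if $f^{(m_0)}(0)<0$, then no $P_l$ with $l \ge m_0$ would be positive on a punctured neighbourhood, and neither would any $P_l$ with $l < m_0$ (those are identically zero), contradicting the existence of the strict polynomial minimum. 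Therefore $m_0$ is even and $f^{(m_0)}(0)>0$. Consequently $P_{m_0}(h) = \frac{f^{(m_0)}(0)}{m_0!} h^{m_0} > 0$ for $h \ne 0$, so $m_0$ is an admissible order; since $m$ is the least admissible one and no $l < m_0$ qualifies, $m = m_0$. This proves that the order $m$ is even and $f^{(m)}(x^\star) > 0$.

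For item 2 I would simply apply the Taylor--Young formula (Theorem \ref{theorem:taylor}) in dimension one at order $m$: since $f$ is $\mathcal{C}^m$, $f(x^\star + h) = \sum_{k=0}^{m} \frac{f^{(k)}(x^\star)}{k!} h^k + o(h^m)$ as $h \to 0$. Because $x^\star$ is a local minimum we have $f'(x^\star) = 0$, and by definition of $m = m_0$ the derivatives $f^{(k)}(x^\star)$ vanish for $2 \le k \le m-1$ as well; the sum therefore collapses to $f(x^\star + h) = f(x^\star) + \frac{f^{(m)}(x^\star)}{m!} h^m + o(h^m)$, which is the asserted expansion (the exponent in the displayed statement should read $h^m$).

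I do not expect a substantial obstacle. The only points deserving attention are the sign lemma for one-variable polynomials near $0$ invoked above, and the observation that the degenerate case in which all derivatives of $f$ at $x^\star$ up to order $p$ vanish is automatically excluded by the hypothesis (then every $P_l$ is the zero polynomial, so no truncated Taylor sum is positive on a punctured neighbourhood and $x^\star$ cannot be a strict polynomial minimum); everything else is a direct use of Taylor--Young.
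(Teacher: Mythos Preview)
Your proposal is correct and is precisely the direct Taylor argument the paper has in mind: the paper does not actually give a proof here, stating only that ``the direct proof using the Taylor formula is left to the reader.'' Your identification $m=m_0$ via the lowest nonvanishing Taylor coefficient, the parity/sign argument, and the application of Taylor--Young are exactly the expected steps, and you are right to flag the typo $h^p\to h^m$ in the displayed expansion.
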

Then $\alpha_1 := 1/m$ is the solution of \eqref{eq:alpha_developpement} and
$$\frac{1}{t} (f(x^\star + t^{1/m}h) - f(x^\star)) \underset{t \rightarrow 0}{\longrightarrow} \frac{f^{(m)}(x^\star)}{m!} h^m $$
which is a non-constant function of $h$, since $f^{(m)}(x^\star) \ne 0$.
The direct proof using the Taylor formula is left to the reader.

\subsection{Proof of Theorem \ref{theorem:main} for $p=1$}
\label{section:order_2}

Let $f \in \mathscr{A}^\star_1(x^\star)$. The assumption that $x^\star$ is a strict polynomial local minimum at order $2$ implies that $\nabla^2 f(x^\star)$ is positive definite. Let us denote $(\beta_i)_{1 \le i \le d}$ its positive eigenvalues. By the spectral theorem, let us write $\nabla^2 f(x^\star ) = B {\rm Diag}(\beta_{1:d})B^T$ for some $B \in \mathcal{O}_d(\mathbb{R})$. Then:
\begin{equation}
\label{equation:order_2}
\frac{1}{t} (f(x^\star + t^{1/2}B \cdot h) - f(x^\star)) \underset{t \rightarrow 0}{\longrightarrow} \frac{1}{2}\sum_{i=1}^{d} \beta_i h_i^2.
\end{equation}
Thus, a solution of \eqref{eq:alpha_developpement} is $\alpha_1=\cdots=\alpha_d=\frac{1}{2}$, $B$, and $g(h_1,\ldots,h_d)=\frac{1}{2} \sum_{i=1}^d \beta_i h_i^2$, which is a non-constant function of every $h_1, \ \ldots, \ h_d$, since for all $i$, $\beta_i$ is positive.

In the following, our objective is to establish a similar result when $\nabla^2 f(x^\star)$ is not necessarily positive definite.

\subsection{Proof of Theorem \ref{theorem:main} for $p=2$}
\label{section:order_4}

\begin{theorem}
\label{theorem:order_4}
Let $f \in \mathscr{A}_2(x^\star)$. Then there exist orthogonal subspaces $E$ and $F$ such that $\mathbb{R}^d = E \oplus F$, and that for all $h \in \mathbb{R}^d$:
\begin{align}
\frac{1}{t} & \left[ f(x^\star + t^{1/2}p_{_{E}}(h) + t^{1/4}p_{_F}(h)) - f(x^\star)\right] \nonumber \\
\label{equation:order_4}
\underset{t \rightarrow 0}{\longrightarrow} & \ \frac{1}{2} \nabla^2 f(x^\star) \cdot p_{_E}(h)^{\otimes 2} + \frac{1}{2} \nabla^3 f(x^\star) \cdot p_{_E}(h) \otimes p_{_F}(h)^{\otimes 2} + \frac{1}{4!} \nabla^4 f(x^\star)\cdot p_{_F}(h)^{\otimes 4} .
\end{align}
Moreover, if $f \in \mathscr{A}_2^\star (x^\star)$, then this is a solution to the problem \eqref{eq:alpha_developpement}, with $E_1 = E$, $E_2=F$, $\alpha$ defined in \eqref{eq:def_alpha:2}, $B$ adapted to the previous decomposition and $g$ defined in \eqref{eq:def_g:2}.
\end{theorem}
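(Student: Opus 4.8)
The plan is to place $x^\star$ at the origin (i.e. work with $f(x^\star+\cdot)$), so $f(x^\star)=0$ and $\nabla f(x^\star)=0$, and to take $F:=\ker\nabla^2 f(x^\star)$ and $E:=F^\perp$; since the tensor $T_2=\nabla^2 f(x^\star)$ is symmetric, $E$ is its image, and this is exactly the pair $(E_1,E_2)$ produced by Algorithm \ref{algo:algorithm} at $p=2$. Writing $u:=p_{_E}(h)$, $v:=p_{_F}(h)$ and $h_t:=t^{1/2}u+t^{1/4}v$, I would first apply Taylor--Young (Theorem \ref{theorem:taylor}) to get $f(x^\star+h_t)=\sum_{k=2}^{4}\frac{1}{k!}T_k\cdot h_t^{\otimes k}+\|h_t\|^4 o(1)$; since $u\perp v$, $\|h_t\|^2=t\|u\|^2+t^{1/2}\|v\|^2=O(t^{1/2})$, hence $\|h_t\|^4=O(t)$ and the remainder divided by $t$ vanishes. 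Expanding each $h_t^{\otimes k}$ with the multinomial formula \eqref{equation:multinomial} and contracting against the symmetric tensor $T_k$ gives
$$\frac1t f(x^\star+h_t)=\sum_{k=2}^4\frac1{k!}\sum_{i+j=k}\binom{k}{i,j}t^{\,i/2+j/4-1}\,T_k\cdot u^{\otimes i}\otimes v^{\otimes j}+o(1).$$

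Then I would sort the monomials by the exponent $i/2+j/4-1$: it is negative precisely for $(k,i,j)\in\{(2,1,1),(2,0,2),(3,0,3)\}$, zero precisely for $(k,i,j)\in\{(2,2,0),(3,1,2),(4,0,4)\}$, and positive otherwise. The positive-exponent monomials tend to $0$, and evaluating the three relevant multinomial coefficients shows that the zero-exponent monomials sum to exactly the right-hand side of \eqref{equation:order_4} (in particular the cross term $\tfrac12\nabla^3 f(x^\star)\cdot p_{_E}(h)\otimes p_{_F}(h)^{\otimes 2}$ is kept; it is not required to vanish). So the whole content reduces to showing that the three negative-exponent monomials vanish identically. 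Two of them, $\nabla^2 f(x^\star)\cdot u\otimes v$ and $\nabla^2 f(x^\star)\cdot v^{\otimes 2}$, are zero because $v\in F=\ker\nabla^2 f(x^\star)$.

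The one nontrivial point --- the heart of the proof --- is that the last one, $\nabla^3 f(x^\star)\cdot v^{\otimes 3}$, vanishes for every $v\in F$. I would derive this from local minimality: fix $v\in F$, and for $|s|$ small enough that $x^\star+sv$ lies in a neighbourhood on which $f\ge f(x^\star)$, Taylor--Young gives $f(x^\star+sv)-f(x^\star)=\frac{s^3}{6}\nabla^3 f(x^\star)\cdot v^{\otimes 3}+O(s^4)\ge 0$, the quadratic term having disappeared because $v\in F$; dividing by $s^3$ and letting $s\downarrow 0$, then $s\uparrow 0$, forces $\nabla^3 f(x^\star)\cdot v^{\otimes 3}=0$. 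This is the order-$4$ instance of the general mechanism; at higher even orders the analogous cancellation is obstructed by Hilbert's $17^{\text{th}}$ problem (Section \ref{section:hilbert}), but here the cubic form is odd and the sign argument is immediate. This establishes \eqref{equation:order_4}; the convergence is uniform in $h$ on compact sets because the Taylor remainder $o(\|h_t\|^4)$ is uniform there and the surviving terms are polynomials in $(u,v)$ times non-negative powers of $t$ --- this uniformity is proved in the general setting in Section \ref{subsec:unif_non_constant}.

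For the final claim, suppose moreover $f\in\mathscr{A}_2^\star(x^\star)$ and let $B\in\mathcal{O}_d(\mathbb{R})$ be adapted to $E\oplus F$ and $\alpha$ as in \eqref{eq:def_alpha:2}; then $B\cdot(t^\alpha\ast h)=t^{1/2}p_{_E}(B\cdot h)+t^{1/4}p_{_F}(B\cdot h)$, so \eqref{equation:order_4} becomes \eqref{equation:order_p:3} with $g$ given by \eqref{eq:def_g:2}. That this $g$ is not constant in any of its variables --- which is where the strict polynomiality of the minimum is genuinely used --- is proved in Section \ref{subsec:unif_non_constant}, and together with the above this exhibits $(\alpha,B,g)$ as a solution of \eqref{eq:alpha_developpement}. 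I expect the Key Lemma of the third paragraph to be the only real obstacle; the rest is bookkeeping with the multinomial expansion.
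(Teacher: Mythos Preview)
Your proof is correct and follows essentially the same route as the paper: both take $F=\ker\nabla^2 f(x^\star)$, $E=F^\perp$, expand by Taylor--Young and the multinomial formula, kill the two diverging quadratic terms because $v\in\ker T_2$, and eliminate $T_3\cdot v^{\otimes 3}$ by the local-minimum sign argument. The only cosmetic difference is that the paper proves non-constancy directly here (from $T_2>0$ on $E$ and $T_4>0$ on $F$ under strict polynomiality), while you defer to Section~\ref{subsec:unif_non_constant}; either is fine.
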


\noindent \textbf{Remark:} The set of $2$-tuples $(i_1,i_2)$ such that $\frac{i_1}{2} + \frac{i_2}{4} = 1$, are $(2,0)$, $(1,2)$ and $(0,4)$, which gives the terms appearing in the sum in \eqref{equation:order_p:2}.

\begin{proof}
Let $F := \{ h \in \mathbb{R}^d : \ \nabla^2 f(x^\star) \cdot h^{\otimes 2} = 0 \}$. By the spectral theorem and since $\nabla^2 f(x^\star)$ is positive semi-definite, $F = \{ h \in \mathbb{R}^d : \ \nabla^2 f(x^\star) \cdot h = 0^{\otimes 1} \}$ is a vector subspace of $\mathbb{R}^d$. Let $E$ be the orthogonal complement of $F$ in $\mathbb{R}^d$.

For $h \in \mathbb{R}^d$ we expand the left term of \eqref{equation:order_4} using the Taylor formula up to order $4$ and the multinomial formula \eqref{equation:multinomial}, giving
$$ \sum_{k=2}^4 \frac{1}{k!} \sum_{\substack{i_1,i_2 \in \lbrace 0,\ldots,k \rbrace \\ i_1+i_2=k}} \binom{k}{i_1,i_2} t^{\frac{i_1}{2} + \frac{i_2}{4}-1} T_k \cdot p_{_E}(h)^{\otimes i_1} \otimes p_{_F}(h)^{\otimes i_2} + o(1).$$
The terms with coefficient $t^a$, $a>0$, are $o(1)$ as $t \rightarrow 0$. 
By definition of $F$ we have $\nabla^2 f(x^\star) \cdot p_{_F}(h) = 0^{\otimes 1}$, so we also have
$$\nabla ^3 f(x^\star) \cdot p_{_F}(h)^{\otimes 3} = 0$$ by the local minimum condition. This yields the convergence stated in \eqref{equation:order_4}.

Moreover, if $x^\star$ is a local minimum of polynomial order 4, then by the local minimum condition, $\nabla^4 f(x^\star) > 0$ on $F$ in the sense of \eqref{eq:tensor_positive_def}. Moreover, since $\nabla^2 f(x^\star) > 0$ on $E$, then the limit is not constant in any $h_1, \ \ldots, \ h_d$.
\end{proof}

\noindent \textbf{Remark:} The cross odd term is not necessarily null. For example, consider
$$ \begin{array}{rrl}
f : & \mathbb{R}^2 & \longrightarrow \mathbb{R} \\
& (x,y) & \longmapsto x^2 + y^4 + xy^2.
\end{array} $$
Then $f$ admits a global minimum at $x^\star=0$ since $|xy^2| \le \frac{1}{2}(x^2+y^4)$. We have $E_1 = \mathbb{R}(1,0)$, $E_2 = \mathbb{R}(0,1)$ and for all $(x,y) \in \mathbb{R}^2$, $T_3 \cdot (xe_1) \otimes (ye_2)^{\otimes 2} = 2xy^2$ is not identically null.

\subsection{Difficulties beyond the 4th order and Hilbert's $17^{\text{th}}$ problem}
\label{section:hilbert}

If we do not assume as in the previous section that $\nabla^4 f(x^\star)$ is not positive on $F$, then we carry on the development of $f(x^\star + h)$ up to higher orders.
A first idea is to consider $F_2 := \{ h \in F: \ \nabla^4 f(x^\star) \cdot h^{\otimes 4}=0 \} \subseteq F$ and $E_2$ a complement subspace of $F_2$ in $F$, and to continue this process by induction as in Section \ref{section:order_4}. However, $F_2$ is not necessarily a subspace of $F$.

Indeed, let $T$ be a symmetric tensor defined on $\mathbb{R}^{d'}$ of order $2k$ with $k \in \mathbb{N}$. As $T$ is symmetric, there exist vectors $v^1, \ \ldots, \ v^q \in \mathbb{R}^{d'}$, and scalars $\lambda_1, \ \ldots, \ \lambda_q \in \mathbb{R}$ such that $T = \sum_i \lambda_i (v^i)^{\otimes 2k}$ (see \cite{comon2008}, Lemma 4.2.), so
$$ \forall h \in \mathbb{R}^{d'}, \ T \cdot h^{\otimes 2k} = \sum_{i=1}^q \lambda_i (v^i)^{\otimes 2k} \cdot h^{\otimes 2k} = \sum_{i=1}^q \lambda_i \langle v^i , h \rangle^{2k} .$$
For $k = 2$ and $T = \nabla^{2k} f(x^\star)_{|_F}$, since $x^\star$ is a local minimum, we have, identifying $F$ and $\mathbb{R}^{d'}$,
$$ \forall h \in \mathbb{R}^{d'}, \ T \cdot h^{\otimes 2k} \ge 0 $$
Then, we could think it implies that for all $i$, $\lambda_i \ge 0$, and then
$$ T \cdot h^{\otimes 2k} = 0 \ \implies \forall i, \ \langle v^i, h \rangle = 0$$
which would give a linear caracterization of $\{h \in \mathbb{R}^{d'} : \ T \cdot h^{\otimes 2k} = 0 \}$ and in this case, $F_2$ would be a subspace of $F$.
However this reasoning is not correct in general as we do not have necessarily that for all $i$, $\lambda_i \ge 0$.

We can build counter-examples as follows. Since $T$ is a non-negative symmetric tensor, $T$ can be seen as a non-negative homogeneous polynomial of degree $2k$ with $d'$ variables. A counter-example at order $2k=4$ is $T(X,Y,Z) = ((X-Y)(X-Z))^2$, which is a non-negative polynomial of order 4, but $\{T=0\} = \{ X=Y \text{ or } X=Z \}$, which is not a vector space.

Another counterexample given in \cite{motzkin1967} at order $2k = 6$ is the following. We define
$$ T(X,Y,Z) = Z^6 + X^4 Y^2 + X^2 Y^4 - 3 X^2 Y^2 Z^2 $$
By the arithmetic-geometric mean inequality and its equality case, $T$ is non-negative and $T(x,y,z) = 0$ if and only if $z^6 = x^4 y^2 = x^2 y^4 $, so that
$$\{ T = 0 \} =
\mathbb{R} \begin{pmatrix}
1 \\ 
1 \\ 
1
\end{pmatrix} \
\cup \ \mathbb{R} \begin{pmatrix}
- 1 \\ 
1 \\ 
1
\end{pmatrix} \ 
\cup \ \mathbb{R} \begin{pmatrix}
1 \\ 
- 1 \\ 
1
\end{pmatrix} \
\cup \ \mathbb{R} \begin{pmatrix}
1 \\ 
1 \\ 
- 1
\end{pmatrix}. $$
Hence, $\{ T = 0 \}$ is not a subspace of $\mathbb{R}^{3}$. In particular $T$ cannot be written as $\sum_i \lambda_i (v^i)^{\otimes 2k}$ with $\lambda_i \ge 0$.

In fact, this problem is linked with the Hilbert's seventeenth problem that we recall below.
\begin{problem}[Hilbert's seventeeth problem]
Let $P$ be a non-negative polynomial with $d'$ variables, homogeneous of even degree $2k$. Find polynomials $P_1, \ \ldots, \ P_r$ with $d'$ variables, homogeneous of degree $k$, such that $P = \sum_{i=1}^r P_i^2$
\end{problem}

Hilbert proved in 1888 \cite{hilbert1888} that there does not always exist a solution. In general $\{T=0\}$ is not even a submanifold of $\mathbb{R}^{d'}$. Indeed, taking $T :h \mapsto \nabla^{2k} f(x^\star) \cdot h^{\otimes 2k}$, we have $\partial_h T\cdot h = 2k \nabla^{2k} f(x^\star) \cdot h^{\otimes 2k-1} $ is not surjective in $h=0$, so the surjectivity condition for $\lbrace T=0 \rbrace$ to be a submanifold is not fulfilled.

\subsection{Proof of Theorem \ref{theorem:main} for $p=3$}
\label{section:order_6}

We slightly change our strategy of proof developed in Section \ref{section:order_4}. For $k \ge 2$, we define $F_k$ recursively as
\begin{equation}
\label{eq:F_k_def}
F_k := \lbrace h \in F_{k-1} : \ \forall h' \in F_{k-1}, \ \nabla^{2k} f(x^\star) \cdot h \otimes h'^{\otimes 2k-1} = 0 \rbrace,
\end{equation}
instead of $\lbrace h \in F_{k-1} : \ \nabla^{2k} f(x^\star) \cdot h^{\otimes 2k} = 0 \rbrace$. Then, by construction, $F_k$ is a vector subspace of $\mathbb{R}^d$.

\begin{theorem}
\label{theorem:order_6}
Let $f \in \mathscr{A}_3(x^\star)$. Then there exist orthogonal subspaces of $\mathbb{R}^d$, $E_1$, $E_2$ and $F_2$, such that
$$ \mathbb{R}^d = E_1 \oplus E_2 \oplus F_2 ,$$
and such that for all $h \in \mathbb{R}^d$,
\begin{align}
\label{equation:order_6}
\frac{1}{t} & \left[ f(x^\star + t^{1/2}p_{_{E_1}}(h) + t^{1/4}p_{_{E_2}}(h) + t^{1/6}p_{_{F_2}}(h)) - f(x^\star) \right] \\
\underset{t \rightarrow 0}{\longrightarrow} & \ \frac{1}{2} \nabla^2 f(x^\star) \cdot p_{_{E_1}}(h)^{\otimes 2} + \frac{1}{2} \nabla^3 f(x^\star) \cdot p_{_{E_1}}(h) \otimes p_{_{E_2}}(h)^{\otimes 2} + \frac{1}{4!} \nabla^4 f(x^\star)\cdot p_{_{E_2}}(h)^{\otimes 4} \nonumber \\
& + \frac{4}{4!}\nabla^4 f(x^\star)\cdot p_{_{E_1}}(h)\otimes p_{_{F_2}}(h)^{\otimes 3} + \frac{10}{5!}\nabla^5 f(x^\star) \cdot p_{_{E_2}}(h)^{\otimes 2} \otimes p_{_{F_2}}(h)^{\otimes 3} + \frac{1}{6!}\nabla^6 f(x^\star) \cdot p_{_{F_2}}(h)^{\otimes 6}. \nonumber
\end{align}
Moreover, if $f \in \mathscr{A}_3^\star(x^\star)$, then this is a solution to the problem \eqref{eq:alpha_developpement}, with $E_3 = F_2$, $\alpha$ defined in \eqref{eq:def_alpha:2}, $B$ adapted to the previous decomposition and $g$ defined in \eqref{eq:def_g:2}.
\end{theorem}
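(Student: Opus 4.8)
The plan is to run the scheme of Section~\ref{section:order_4} one step further, with the subspace $F_2$ of \eqref{eq:F_k_def} in the role that $F$ played there; the whole point of defining $F_2$ via \eqref{eq:F_k_def} rather than as the diagonal zero set $\{h\in F_1:\ T_4\cdot h^{\otimes 4}=0\}$ is, as recalled in Section~\ref{section:hilbert}, that $F_2$ is then a genuine vector subspace. So I would set $F_1:=\ker\nabla^2 f(x^\star)$, $E_1:=F_1^{\perp}$, let $F_2$ be as in \eqref{eq:F_k_def}, $E_2:=$ the orthogonal complement of $F_2$ in $F_1$, and $E_3:=F_2$, so that $\mathbb{R}^d=E_1\oplus E_2\oplus F_2$. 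Fix $h\in\mathbb{R}^d$ and abbreviate $a:=p_{_{E_1}}(h)$, $b:=p_{_{E_2}}(h)$, $c:=p_{_{F_2}}(h)$. Expanding the left-hand side of \eqref{equation:order_6} by Taylor--Young (Theorem~\ref{theorem:taylor}) to order $6$, then by the multinomial formula \eqref{equation:multinomial}, and using $\nabla f(x^\star)=0$, one gets a finite sum of terms $\frac{1}{k!}\binom{k}{i_1,i_2,i_3}\,t^{\frac{i_1}{2}+\frac{i_2}{4}+\frac{i_3}{6}-1}\,T_k\cdot a^{\otimes i_1}\otimes b^{\otimes i_2}\otimes c^{\otimes i_3}$, $k=i_1+i_2+i_3$, plus $o(1)$. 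Terms with $\frac{i_1}{2}+\frac{i_2}{4}+\frac{i_3}{6}>1$ are $o(1)$; the terms with $\frac{i_1}{2}+\frac{i_2}{4}+\frac{i_3}{6}=1$ are exactly the six displayed on the right of \eqref{equation:order_6}; so everything reduces to showing that the terms with $\frac{i_1}{2}+\frac{i_2}{4}+\frac{i_3}{6}<1$ vanish identically. Listing those triples, they are: $T_2$ contracted with any pair among $a,b,c$ that involves $b$ or $c$; $T_3$ contracted with $a\otimes b\otimes c$, with $a\otimes c^{\otimes 2}$, or with any degree-$3$ monomial in $b,c$; $T_4$ contracted with any degree-$4$ monomial in $b,c$ containing at least one $c$; and $T_5\cdot c^{\otimes 5}$, $T_5\cdot b\otimes c^{\otimes 4}$.

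The first batch of these vanishings is soft. Since $b,c\in F_1=\ker\nabla^2 f(x^\star)$, all the $T_2$-terms vanish. Restricting $f$ to the line $s\mapsto f(x^\star+su)$ for $u\in F_1$ and using minimality gives $T_3\cdot u^{\otimes 3}=0$, and since $F_1$ is a subspace, polarizing yields $T_3\cdot u_1\otimes u_2\otimes u_3=0$ for all $u_1,u_2,u_3\in F_1$; this kills every $T_3$-term that is a degree-$3$ monomial in $b,c$. The defining relation of $F_2$ says $T_4\cdot c\otimes h'^{\otimes 3}=0$ for $c\in F_2$, $h'\in F_1$; polarizing the cubic form $h'\mapsto T_4\cdot c\otimes h'^{\otimes 3}$ over $F_1$ gives $T_4\cdot c\otimes u_1\otimes u_2\otimes u_3=0$ for all $u_i\in F_1$ --- in particular $T_4\cdot c^{\otimes 4}=0$ --- and this kills every $T_4$-term above. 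Finally, restricting $f$ to $s\mapsto f(x^\star+sc)$ for $c\in F_2$: the orders $s^2,s^3,s^4$ all vanish (because $T_2\cdot c=0^{\otimes 1}$, $T_3\cdot c^{\otimes 3}=0$, $T_4\cdot c^{\otimes 4}=0$), so minimality forces the $s^5$ coefficient $T_5\cdot c^{\otimes 5}$ to vanish.

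What remains are the three genuinely new terms $T_3\cdot a\otimes c^{\otimes 2}$, $T_3\cdot a\otimes b\otimes c$, and $T_5\cdot b\otimes c^{\otimes 4}$, all involving $E_1$, on which $\nabla^2 f(x^\star)$ is positive definite; here the ``stay inside $F_1$'' argument is useless and I would test minimality along curved paths. Along $s\mapsto f(x^\star+sc+s^2 w)$ with $c\in F_2$ and $w\in\mathbb{R}^d$ arbitrary, every contribution of order $<s^4$ cancels (by $\nabla f(x^\star)=0$, $T_2\cdot c=0^{\otimes 1}$, $T_3\cdot c^{\otimes 3}=0$), so dividing $f(x^\star+sc+s^2 w)\ge f(x^\star)$ by $s^4$ and letting $s\to 0^+$ gives $\tfrac12 T_2\cdot w^{\otimes 2}+\tfrac12 T_3\cdot c^{\otimes 2}\otimes w+\tfrac1{24}T_4\cdot c^{\otimes 4}\ge 0$; replacing $w$ by $\varepsilon w$ and reading this as a nonnegative quadratic in $\varepsilon\in\mathbb{R}$ forces $(T_3\cdot c^{\otimes 2}\otimes w)^2\le\tfrac13\,(T_2\cdot w^{\otimes 2})(T_4\cdot c^{\otimes 4})=0$, so $T_3\cdot c^{\otimes 2}\otimes w=0$ for all $w$, which disposes of $T_3\cdot a\otimes c^{\otimes 2}$. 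For $T_3\cdot a\otimes b\otimes c$ I would run the same discriminant inequality with $u:=b+\mu c\in F_1$ ($\mu\in\mathbb{R}$) in place of $c$ and $w:=a$: using $T_3\cdot c^{\otimes 2}\otimes a=0$ and the fact that every $T_4$-contraction carrying a $c$ and otherwise $F_1$-vectors vanishes (so $T_4\cdot(b+\mu c)^{\otimes 4}=T_4\cdot b^{\otimes 4}$), the inequality becomes $\big(T_3\cdot b^{\otimes 2}\otimes a+2\mu\,T_3\cdot b\otimes c\otimes a\big)^2\le\tfrac13\,(T_2\cdot a^{\otimes 2})(T_4\cdot b^{\otimes 4})$ for every $\mu\in\mathbb{R}$; the left-hand side is a polynomial of degree $\le 2$ in $\mu$ bounded above uniformly in $\mu$, so its leading coefficient $(T_3\cdot b\otimes c\otimes a)^2$ must vanish. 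The last term $T_5\cdot b\otimes c^{\otimes 4}$ is handled the same way along $s\mapsto f(x^\star+sc+\mu s^2 b)$: all orders $<s^6$ vanish (now also using $T_4\cdot c^{\otimes 3}\otimes b=0$, $T_3\cdot c^{\otimes 2}\otimes b=0$, $T_5\cdot c^{\otimes 5}=0$), the $s^6$ coefficient equals $\tfrac1{720}T_6\cdot c^{\otimes 6}+\tfrac{\mu}{24}T_5\cdot c^{\otimes 4}\otimes b$, and nonnegativity of this affine function of $\mu\in\mathbb{R}$ forces $T_5\cdot c^{\otimes 4}\otimes b=0$. This establishes \eqref{equation:order_6}.

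For the ``moreover'' part, with $f\in\mathscr{A}_3^\star(x^\star)$: if $B\in\mathcal{O}_d(\mathbb{R})$ is adapted to $E_1\oplus E_2\oplus F_2$ then $B\cdot(t^\alpha\ast h)=t^{1/2}p_{_{E_1}}(B\cdot h)+t^{1/4}p_{_{E_2}}(B\cdot h)+t^{1/6}p_{_{F_2}}(B\cdot h)$ with $\alpha$ as in \eqref{eq:def_alpha:2}, so applying \eqref{equation:order_6} with $B\cdot h$ in place of $h$ gives \eqref{equation:order_p:3} with $g$ as in \eqref{eq:def_g:2}; the uniformity of the convergence on compact sets and the non-constancy of $g$ in each of its variables (which rests on $\nabla^2 f(x^\star)$ being positive definite on $E_1$ and on the strict polynomiality of the minimum) are postponed to Section~\ref{subsec:unif_non_constant}, exactly as for $p\le 2$. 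I expect the main obstacle to be precisely the three curved-path arguments: one must verify with care that along the chosen paths every lower-order coefficient really does vanish --- which is exactly where the definition \eqref{eq:F_k_def} of $F_2$ (rather than the diagonal zero set, cf.\ Hilbert's $17^{\text{th}}$ problem) is indispensable --- and that the resulting polynomials in $\varepsilon$ or $\mu$ are bounded above, not merely pointwise nonnegative, so that forcing the top-degree coefficient to be $0$ is legitimate.
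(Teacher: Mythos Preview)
Your proof is correct and follows essentially the same architecture as the paper's: the same subspaces $F_1,E_1,F_2,E_2$, the same Taylor--multinomial bookkeeping, and the same list of potentially divergent terms to kill. The only real difference is in how you establish the three ``hard'' vanishings $T_3\cdot a\otimes c^{\otimes 2}$, $T_3\cdot a\otimes b\otimes c$, $T_5\cdot b\otimes c^{\otimes 4}$. The paper recycles the already-proved order-$4$ limit (inequality \eqref{equation:order_4_positive}), specializes it to $h\in E_1\oplus F_2$ or $h\in E_1\oplus E_2\oplus F_2$, and then scales one component by $\lambda\to\pm\infty$; for $T_5\cdot b\otimes c^{\otimes 4}$ it looks at $t^{-11/12}[f(x^\star+t^{1/4}b+t^{1/6}c)-f(x^\star)]$ directly. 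You instead test the local minimum along explicit curves $s\mapsto x^\star+sc+s^2w$ (respectively $s\mapsto x^\star+sc+\mu s^2 b$) and read off the first nonvanishing Taylor coefficient, then use a discriminant/boundedness argument in $\varepsilon$ or $\mu$. These two routes are equivalent --- your curved paths are just reparametrizations of the scaled paths the paper uses --- and both exploit precisely the same consequence of the definition \eqref{eq:F_k_def}: that $T_4\cdot c\otimes u_1\otimes u_2\otimes u_3=0$ for all $u_i\in F_1$, so in particular $T_4\cdot c^{\otimes 4}=0$ and $T_4\cdot(b+\mu c)^{\otimes 4}=T_4\cdot b^{\otimes 4}$. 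Your version is slightly more self-contained (it does not invoke Theorem~\ref{theorem:order_4}); the paper's is slightly more modular. Either way the argument goes through, and your handling of the ``moreover'' clause and the deferral to Section~\ref{subsec:unif_non_constant} matches the paper exactly.
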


\noindent \textbf{Remark:} The set of $3$-tuples $(i_1,i_2,i_3)$ such that $\frac{i_1}{2} + \frac{i_2}{4} + \frac{i_3}{6} = 1$, are $(2,0,0)$, $(1,2,0)$, $(0,4,0)$, $(1,0,3)$, $(0,2,3)$, $(0,0,6)$,  which gives the terms appearing in \eqref{equation:order_p:2}.

\begin{proof}
We consider the subspace
$$ F_1 := \lbrace h \in \mathbb{R}^d : \ T_2 \cdot h^{\otimes 2} = 0 \rbrace  = \lbrace h \in \mathbb{R}^d : \ T_2 \cdot h = 0^{\otimes 1} \rbrace, $$
since $T_2 \ge 0$.
Then, let $E_1$ be the orthogonal complement of $F_1$ in $\mathbb{R}^d$ and consider the vector subspace of $F_1$ defined by
$$ F_2 = \lbrace h \in F_1 : \ \forall h' \in F_1, \ T_4 \cdot h \otimes h'^{\otimes 3} = 0 \rbrace .$$
Let $E_2$ be the orthogonal complement of $F_2$ in $F_1$. Then we have
$$ \mathbb{R}^d = E_1 \oplus F_1 = E_1 \oplus E_2 \oplus F_2 .$$
For $h \in \mathbb{R}^d$ we expand the left term of \eqref{equation:order_6} using the Taylor formula up to order $6$ and the multinomial formula \eqref{equation:multinomial}, giving
$$ \sum_{k=2}^6 \frac{1}{k!} \sum_{\substack{i_1,i_2,i_3 \in \lbrace 0,\ldots,k \rbrace \\ i_1+i_2+i_3=k}} \binom{k}{i_1,i_2,i_3} t^{\frac{i_1}{2} + \frac{i_2}{4} + \frac{i_3}{6}-1} T_k \cdot p_{_{E_1}}(h)^{\otimes i_1} \otimes p_{_{E_2}}(h)^{\otimes i_2} \otimes p_{_{F_2}}(h)^{\otimes i_3} + o(1),$$
and we prove the convergence stated in \eqref{equation:order_6}.

\medskip

All the terms with coefficient $t^a$ where $a>0$ are $o(1)$ as $t \rightarrow 0$.

\textbf{Order 2:} we have $T_2 \cdot p_{_{E_2}}(h) = 0^{\otimes 1}$ and $T_2 \cdot p_{_{F_2}}(h) = 0^{\otimes 1}$ so the only term for $k=2$ is $\frac{1}{2} T_2 \cdot p_{_{E_1}} (h)^{\otimes 2}$.

\medskip

\textbf{Order 3:} $\triangleright$ Since $x^\star$ is a local minimum and $T_2 \cdot p_{_{F_1}}(h)^{\otimes 2} = 0$, we have $T_3 \cdot p_{_{F_1}}(h)^{\otimes 3} = 0$. Then, using property Proposition \ref{proposition:null_tensor:1}, if the factor $p_{_{E_1}}(h)$ does not appear as an argument in $T_3$, then the corresponding term is zero.

$\triangleright$ Let us prove that
\begin{equation*}
T_3 \cdot p_{_{E_1}}(h) \otimes p_{_{F_2}}(h)^{\otimes 2} = 0 .
\end{equation*}
Using Theorem \ref{theorem:order_4} with $E = E_1$, $F = E_2 \oplus F_2$, we have in particular that for all $h \in \mathbb{R}^d$,
\begin{equation}
\label{equation:order_4_positive}
\frac{1}{2} T_2 \cdot p_{_E}(h)^{\otimes 2} + \frac{1}{2} T_3 \cdot p_{_E}(h) \otimes p_{_F}(h)^{\otimes 2} + \frac{1}{4!} T_4 \cdot p_{_F}(h)^{\otimes 4} \ge 0 .
\end{equation}
Then taking $h \in E_1 \oplus F_2$ so that $h= p_{_{E_1}}(h)+p_{_{F_2}}(h)$ and with
\begin{equation}
\label{eq:proof:T4_null}
\left[T_4\cdot p_{_{F_2}}(h)\right]_{|F_1} \equiv 0^{\otimes 3},
\end{equation}
we may rewrite \eqref{equation:order_4_positive} as
$$ \frac{1}{2} T_2 \cdot p_{_{E_1}}(h)^{\otimes 2} + \frac{1}{2} T_3 \cdot p_{_{E_1}}(h) \otimes p_{_{F_2}}(h)^{\otimes 2} \ge 0 .$$
Now, considering $h' = \lambda h$, we have that for all $\lambda \in \mathbb{R}$, 
$$ \lambda^2 \left(\frac{1}{2} T_2 \cdot p_{_{E_1}}(h)^{\otimes 2} + \frac{\lambda}{2} T_3 \cdot p_{_{E_1}}(h) \otimes p_{_{F_2}}(h)^{\otimes 2} \right) \ge 0 ,$$
so that necessarily $T_3 \cdot p_{_{E_1}}(h) \otimes p_{_{F_2}}(h)^{\otimes 2} = 0$.

$\triangleright$ Let us prove that
$$ T_3 \cdot p_{_{E_1}}(h) \otimes p_{_{E_2}}(h) \otimes p_{_{F_2}}(h) = 0 .$$
We use again \eqref{equation:order_4_positive}, with $p_{_F}(h) = p_{_{E_2}}(h) + p_{_{F_2}}(h)$, so that
$$ \frac{1}{2} T_2 \cdot p_{_{E_1}}(h)^{\otimes 2} + \frac{1}{2} T_3 \cdot p_{_{E_1}}(h) \otimes \left(p_{_{E_2}}(h) + p_{_{F_2}}(h)\right)^{\otimes 2} + \frac{1}{4!} T_4 \cdot \left(p_{_{E_2}}(h) + p_{_{F_2}}(h)\right)^{\otimes 4} \ge 0 . $$
But using \eqref{eq:proof:T4_null} and that $T_3 \cdot p_{_{E_1}}(h) \otimes p_{_{F_2}}(h)^{\otimes 2} = 0$, we obtain
$$ \frac{1}{2} T_2 \cdot p_{_{E_1}}(h)^{\otimes 2} + \frac{1}{2} T_3 \cdot p_{_{E_1}}(h) \otimes p_{_{E_2}}(h)^{\otimes 2} + T_3 \cdot p_{_{E_1}}(h) \otimes p_{_{E_2}}(h) \otimes p_{_{F_2}}(h) + \frac{1}{4!} T_4 \cdot p_{_{E_2}}(h)^{\otimes 4} \ge 0 . $$
Now, considering $h' = p_{_{E_1}}(h) + p_{_{E_2}}(h) + \lambda p_{_{F_2}}(h)$, we have that for all $\lambda \in \mathbb{R}$,
$$ \frac{1}{2} T_2 \cdot p_{_{E_1}}(h)^{\otimes 2} + \frac{1}{2} T_3 \cdot p_{_{E_1}}(h) \otimes p_{_{E_2}}(h)^{\otimes 2} + \lambda T_3 \cdot p_{_{E_1}}(h) \otimes p_{_{E_2}}(h) \otimes p_{_{F_2}}(h) + \frac{1}{4!} T_4 \cdot p_{_{E_2}}(h)^{\otimes 4} \ge 0 ,$$
so necessarily $T_3 \cdot p_{_{E_1}}(h) \otimes p_{_{E_2}}(h) \otimes p_{_{F_2}}(h) = 0$.

$\triangleright$ The last remaining term for $k=3$ is $\frac{1}{2} T_3 \cdot p_{_{E_1}}(h) \otimes p_{_{E_2}}(h)^{\otimes 2}$.

\medskip

\textbf{Order 4:} If the factor $p_{_{E_1}}(h)$ does not appear and if the factor $p_{_{F_2}}(h)$ appears at least once, then using \eqref{eq:proof:T4_null} the corresponding term is zero. If $p_{_{E_1}}(h)$ appears, the only term with a non-positive exponent of $t$ is $\frac{4}{4!} T_4 \cdot p_{_{E_1}}(h) \otimes p_{_{F_2}}(h)^{\otimes 3}$. So the only terms for $k=4$ are $\frac{1}{4!}T_4 \cdot p_{_{E_2}}(h)^{\otimes 4}$ and $\frac{4}{4!} T_4 \cdot p_{_{E_1}}(h)\otimes p_{_{F_2}}(h)^{\otimes 3}$.

\medskip

\textbf{Order 5:} $\triangleright$ The terms where $p_{_{E_1}}(h)$ appears at least once have a coefficient $t^a$ with $a>0$ so are $o(1)$ when $t \rightarrow 0$.

$\triangleright$ We have $T_2 \cdot p_{_{F_2}}(h)^{\otimes 2} = 0$, $T_3 \cdot p_{_{F_2}}(h)^{\otimes 3} = 0$, $T_4 \cdot p_{_{F_2}}(h)^{\otimes 4} = 0$ and since $x^\star$ is a local minimum, we have
$$ T_5 \cdot p_{_{F_2}}(h)^{\otimes 5} = 0 .$$

$\triangleright$ Let us prove that
$$ T_5 \cdot p_{_{E_2}}(h) \otimes p_{_{F_2}}(h)^{\otimes 4} = 0 .$$
Let $h \in \mathbb{R}^d$. We have
$$ \frac{1}{t^{11/12}} \left[ f(x^\star + t^{1/4}p_{_{E_2}}(h) + t^{1/6}p_{_{F_2}}(h)) - f(x^\star) \right] \underset{t \rightarrow 0}{\longrightarrow} \frac{1}{4!} T_5 \cdot p_{_{E_2}}(h) \otimes p_{_{F_2}}(h)^{\otimes 4} \ge 0 .$$
Hence, considering $h'=\lambda h$, we have for every $\lambda \in \mathbb{R}$,
$$ \lambda^5 T_5 \cdot p_{_{E_2}}(h) \otimes p_{_{F_2}}(h)^{\otimes 4} \ge 0 ,$$
which yields the desired result.

$\triangleright$ The only remaining term for $p=5$ is
$$\frac{10}{5!} T_5 \cdot p_{_{E_2}}(h)^{\otimes 2} \otimes p_{_{F_2}}(h)^{\otimes 3} .$$

\medskip

\textbf{Order 6:} The only term for $k=6$ is $\frac{1}{6!} T_6 \cdot p_{_{F_2}}(h)^{\otimes 6}$ ; the other terms have a coefficient $t^a$ with $a > 0$, so are $o(1)$ when $t \rightarrow 0$.
\end{proof}

\textbf{Remark :} As in Theorem \ref{theorem:order_4} and the remark that follows, the remaining odd cross-terms cannot be proved to be zero using the same method of proof, and may be actually not zero. For example, consider:
$$ \begin{array}{rrl}
f : & \mathbb{R}^2 & \longrightarrow \mathbb{R} \\
& (x,y) & \longmapsto x^4 + y^6 + x^2y^3,
\end{array} $$
which satisfies $h \mapsto \nabla^5 f(x^\star) \cdot p_{_{E_2}}(h)^{\otimes 2} \otimes p_{_{F_2}}(h)^{\otimes 3} \not\equiv 0$.

\subsection{Proof of Theorem \ref{theorem:main} for $p=4$}
\label{section:order_8}

\begin{theorem}
\label{theorem:order_8}
Let $f \in \mathscr{A}_4(x^\star)$. Then there exist orthogonal subspaces of $\mathbb{R}^d$, $E_1$, $E_2$, $E_3$ and $F_3$ such that
$$ \mathbb{R}^d = E_1 \oplus E_2 \oplus E_3 \oplus F_3 ,$$
and for all $h \in \mathbb{R}^d$,
\begin{align}
& \frac{1}{t} \left[ f(x^\star + t^{1/2}p_{_{E_1}}(h) + t^{1/4}p_{_{E_2}}(h) + t^{1/6}p_{_{E_3}}(h) + t^{1/8}p_{_{F_3}}(h)) - f(x^\star) \right] \nonumber \\
\label{equation:order_8:2}
\underset{t \rightarrow 0}{\longrightarrow} \ & \sum_{k=2}^{8} \frac{1}{k!} \sum_{\substack{i_1,\ldots,i_{4} \in \lbrace 0,\ldots,k \rbrace \\ i_1 + \cdots + i_{4} = k }} \binom{k}{i_1,\ldots,i_{4}} T_k \cdot p_{_{E_1}}(h)^{\otimes i_1} \otimes p_{_{E_2}}(h)^{\otimes i_2} \otimes p_{_{E_3}}(h)^{\otimes i_3} \otimes p_{_{F_3}}(h)^{\otimes i_4}.
%
\end{align}
These terms are summarized as tuples $(i_1,\ldots,i_4)$ in Table \ref{figure:terms_exponent_1}. Moreover, if $f \in \mathscr{A}_4^\star(x^\star)$, then this is a solution to \eqref{eq:alpha_developpement}, with $E_4 = F_3$, $\alpha$ defined in \eqref{eq:def_alpha:2}, $B$ adapted to the previous decomposition and $g$ defined in \eqref{eq:def_g:2}.
\end{theorem}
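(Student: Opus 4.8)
The plan is to follow the same inductive strategy developed in the proofs for $p=2$ and $p=3$, now carried one step further. First I would set up the nested family of subspaces: let $F_1 := \{h : T_2 \cdot h^{\otimes 2} = 0\} = \{h : T_2 \cdot h = 0^{\otimes 1}\}$ (using $T_2 \ge 0$), let $E_1$ be its orthogonal complement, then define $F_2 := \{h \in F_1 : \forall h' \in F_1, \ T_4 \cdot h \otimes h'^{\otimes 3} = 0\}$ with $E_2$ its orthogonal complement in $F_1$, and finally $F_3 := \{h \in F_2 : \forall h' \in F_2, \ T_6 \cdot h \otimes h'^{\otimes 5} = 0\}$ with $E_3$ its orthogonal complement in $F_2$. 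By construction each $F_k$ is a genuine vector subspace, so $\mathbb{R}^d = E_1 \oplus E_2 \oplus E_3 \oplus F_3$. Then I would Taylor-expand $f(x^\star + t^{1/2}p_{E_1}(h) + t^{1/4}p_{E_2}(h) + t^{1/6}p_{E_3}(h) + t^{1/8}p_{F_3}(h)) - f(x^\star)$ up to order $8$ and apply the multinomial formula, getting a sum over tuples $(i_1,i_2,i_3,i_4)$ with coefficient $t^{\frac{i_1}{2}+\frac{i_2}{4}+\frac{i_3}{6}+\frac{i_4}{8}-1}$; terms with positive exponent are $o(1)$, so only the tuples with $\frac{i_1}{2}+\frac{i_2}{4}+\frac{i_3}{6}+\frac{i_4}{8} \le 1$ can contribute, and among those the ones with $<1$ must be shown to vanish.

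The heart of the proof is showing that every surviving term with exponent exactly zero but which is a ``genuine cross term'' (not of the pure forms appearing in \eqref{eq:def_g:2}) is in fact zero, plus all the $<1$ terms. I would organize this order by order, $k=2,\dots,8$, exactly as in Theorem \ref{theorem:order_6}. The mechanical tools are: (i) $T_2$ kills $p_{F_1}(h)$, hence (local-minimum condition) $T_3 \cdot p_{F_1}(h)^{\otimes 3} = 0$; (ii) $[T_4 \cdot p_{F_2}(h)]_{|F_1} \equiv 0^{\otimes 3}$ by definition of $F_2$, and similarly $[T_6 \cdot p_{F_3}(h)]_{|F_2} \equiv 0^{\otimes 5}$ by definition of $F_3$; (iii) the lemma (``Proposition \ref{proposition:null_tensor:1}'') that if a symmetric tensor restricted to a subspace vanishes on the diagonal then the relevant partially-contracted tensors vanish, so that if a factor $p_{E_1}(h)$ is absent the term dies; (iv) the key ``$\lambda$-trick'': feeding $h' = p_{E_1}(h) + \cdots + \lambda p_{E_j}(h) + \cdots$ into an already-established lower-order non-negativity inequality (from Theorem \ref{theorem:order_4} and Theorem \ref{theorem:order_6}, applied with suitable groupings of the $E_i$'s) forces the linear-in-$\lambda$ coefficient — an odd cross term — to vanish; and (v) extracting an intermediate scaling limit $\frac{1}{t^{a}}[f(x^\star + t^{1/4}p_{E_2}(h)+\cdots) - f(x^\star)] \to (\text{single term}) \ge 0$ and then rescaling $h \mapsto \lambda h$ to kill odd terms like $T_5 \cdot p_{E_2}(h)\otimes p_{F_3}(h)^{\otimes 4}$. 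Once the list of nonzero terms is pinned down it matches \eqref{equation:order_8:2} and the tuple list in Table \ref{figure:terms_exponent_1}.

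For the final assertion — that if moreover $f \in \mathscr{A}_4^\star(x^\star)$ this is a solution of \eqref{eq:alpha_developpement} — I would argue as at the ends of Theorems \ref{theorem:order_4} and \ref{theorem:order_6}: the strict polynomial order $8$ condition forces $T_2 > 0$ on $E_1$, $T_4 > 0$ on $E_2$ (restricted appropriately), $T_6 > 0$ on $E_3$, and $T_8 > 0$ on $F_3$, each in the sense of \eqref{eq:tensor_positive_def}; hence $g$ defined by \eqref{eq:def_g:2} genuinely depends on each coordinate block, so it is not constant in any $h_i$ (the detailed non-constancy and the uniform-on-compacts convergence being deferred to Section \ref{subsec:unif_non_constant}). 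Choosing $B \in \mathcal{O}_d(\mathbb{R})$ adapted to $E_1 \oplus E_2 \oplus E_3 \oplus F_3$ and $\alpha$ as in \eqref{eq:def_alpha:2} then transports \eqref{equation:order_8:2} into the form \eqref{equation:order_p:3}.

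The main obstacle I expect is the bookkeeping at orders $5$, $6$ and $7$: this is the first $p$ for which three distinct intermediate subspaces $E_1,E_2,E_3$ all carry nonzero derivative information, so there are many mixed tuples (e.g. involving $T_5$, $T_6$, $T_7$ with two or three different projection factors) whose exponent equals exactly $1$ and which must be individually checked to be either legitimately present or forced to zero; getting the right grouping of the $E_i$'s when reusing the lower-order inequalities \eqref{equation:order_4_positive} and its order-$6$ analogue, and being careful that Hilbert-$17$-type obstructions do \emph{not} yet bite at $p=4$ (they only appear at $p \ge 5$, cf. Section \ref{section:order_10}), is the delicate part. The genuinely irreducible odd cross-terms (like the analogue of $T_5 \cdot p_{E_2}(h)^{\otimes 2}\otimes p_{F_2}(h)^{\otimes 3}$ in the $p=3$ case) simply remain in $g$ and are not claimed to vanish.
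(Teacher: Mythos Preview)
Your proposal is correct and follows essentially the same approach as the paper: the same nested subspace construction, the same Taylor/multinomial expansion, and the same positivity-plus-$\lambda$-scaling arguments to kill the sub-unit-exponent cross terms. The one organizational difference is that the paper does not go order by order in $k$; instead it first observes that any tuple with $\frac{i_1}{2}+\frac{i_2}{4}+\frac{i_3}{6}+\frac{i_4}{6}<1$ is already killed by Theorem~\ref{theorem:order_6} applied to $(i_1,i_2,i_3+i_4)$, which reduces the remaining work to a short explicit list of nine terms (three each at exponents $21/24$, $11/12$, $23/24$) handled by the $\lambda$-trick grouped by exponent value rather than by $k$.
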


\begin{table}
	\centering
	\begin{tabular}{|c|c|}
	\hline 
	\rule[-1ex]{0pt}{2.5ex} Order $2$ & $(2,0,0,0)$ \\ 
	\hline 
	\rule[-1ex]{0pt}{2.5ex} Order $3$ & $(2,1,0,0)$ \\ 
	\hline 
	\rule[-1ex]{0pt}{2.5ex} Order $4$ & $(0,4,0,0), \ (1,1,0,2), \ (1,0,3,0)$ \\
	\hline
	\rule[-1ex]{0pt}{2.5ex} Order $5$ & $(1,0,0,4), \ (0,2,3,0), \ (0,3,0,2)$ \\
	\hline
	\rule[-1ex]{0pt}{2.5ex} Order $6$ & $(0,1,3,2), \ (0,2,0,4), \ (0,0,6,0)$ \\
	\hline
	\rule[-1ex]{0pt}{2.5ex} Order $7$ & $(0,1,0,6), \ (0,0,3,4)$ \\
	\hline
	\rule[-1ex]{0pt}{2.5ex} Order $8$ & $(0,0,0,8)$ \\
	\hline
	\end{tabular}
	\caption{Terms expressed as $4$-tuples in the development \eqref{equation:order_8:2}}
	\label{figure:terms_exponent_1}
\end{table}

\begin{proof}
As before, we define the subspaces $F_0 := \mathbb{R}^d$ and by induction:
$$ F_k = \left\lbrace h \in F_{k-1} : \ \forall h' \in F_{k-1}, \  T_{2k} \cdot h \otimes h'^{\otimes 3} = 0 \right\rbrace $$
for $k=1,2,3$. We define $E_k$ as the orthogonal complement of $F_k$ in $F_{k-1}$ for $k=1,2,3$, so that
$$  \mathbb{R}^d = E_1 \oplus E_2 \oplus E_3 \oplus F_3 .$$

\begin{table}
  \centering
  \begin{tabular}{|c|c|c|c|}
    \hline
	$E_1$ & \multicolumn{3}{c|}{$F_1$} \\
	\hline
	\multirow{5}{*}{$T_2 \ge 0$} & \multicolumn{3}{c|}{$T_2=0$} \\
	\hhline{~---}
	& $E_2$ & \multicolumn{2}{c|}{$F_2$} \\
	\hhline{~---}
	& \multirow{3}{*}{$T_4 \ge 0$} & \multicolumn{2}{c|}{$T_4=0$} \\
	\hhline{~~--}
	& & $E_3$ & $F_3$ \\
	\hhline{~~--}
	& & $T_6 \ge 0$ & $T_6=0$ \\
	\hline 
  \end{tabular}
  \caption{Illustration of the subspaces}
  \label{figure:subspaces}
\end{table}

\noindent Then we apply a Taylor expansion up to order $8$ to the left side of \eqref{equation:order_8:2} and the multinomial formula \eqref{equation:multinomial}, which reads
$$ \sum_{k=2}^{8} \frac{1}{k!} \sum_{\substack{i_1,\ldots,i_{4} \in \lbrace 0,\ldots,k \rbrace \\ i_1 + \cdots + i_{4} = k }} \binom{k}{i_1,\ldots,i_{4}} t^{\frac{i_1}{2} + \cdots + \frac{i_4}{8} -1} T_k \cdot p_{_{E_1}}(h)^{\otimes i_1} \otimes p_{_{E_2}}(h)^{\otimes i_2} \otimes p_{_{E_3}}(h)^{\otimes i_3} \otimes p_{_{F_3}}(h)^{\otimes i_4} + o(1) .$$
$\triangleright$ If $\frac{i_1}{2} + \cdots + \frac{i_4}{8} > 1$ then the corresponding term is in $o(1)$ when $t \rightarrow 0$.

\noindent $\triangleright$ If $\frac{i_1}{2} + \cdots + \frac{i_4}{8} < 1$ then the corresponding term diverges when $t \rightarrow 0$, so we need to prove that actually
\begin{equation}
\label{eq:proof:order_8_null_terms}
T_k \cdot p_{_{E_1}}(h)^{\otimes i_1} \otimes p_{_{E_2}}(h)^{\otimes i_2} \otimes p_{_{E_3}}(h)^{\otimes i_3} \otimes p_{_{F_3}}(h)^{\otimes i_4} = 0 .
\end{equation}

-- If $\frac{i_1}{2} + \frac{i_2}{4} + \frac{i_3}{6} + \frac{i_4}{8} < 1$ but if we also have $\frac{i_1}{2} + \frac{i_2}{4} + \frac{i_3}{6} + \frac{i_4}{6} < 1$, then by applying the property at the order $6$ (Theorem \ref{theorem:order_6}) with the $3$-tuple $(i_1,i_2,i_3+i_4)$, we get \eqref{eq:proof:order_8_null_terms}.

-- So we only need to consider $4$-tuples such that $\frac{i_1}{2} + \frac{i_2}{4} + \frac{i_3}{6} + \frac{i_4}{8} < 1$ and $\frac{i_1}{2} + \frac{i_2}{4} + \frac{i_3}{6} + \frac{i_4}{6} \ge 1$. We can remove all the terms which are null by the definitions of the subspaces $E_1, \ E_2, \ E_3, \ F_3$. The remaining terms are:

For $k=4$: $\frac{t^{21/24}}{6} T_4 \cdot p_{_{E_1}}(h) \otimes p_{_{F_3}}(h)^{\otimes 3}$, $\frac{t^{11/12}}{2} T_4 \cdot p_{_{E_1}}(h) \otimes p_{_{E_3}}(h) \otimes p_{_{F_3}}(h)^{\otimes 2}$, $\frac{t^{23/24}}{2} T_4 \cdot p_{_{E_1}}(h) \otimes p_{_{E_3}}(h)^{\otimes 2} \otimes p_{_{F_3}}(h)$.

For $k=5$ : $\frac{t^{21/24}}{12} T_5 \cdot p_{_{E_2}}(h)^{\otimes 2} \otimes p_{_{F_3}}(h)^{\otimes 3}$, $\frac{t^{11/12}}{4} T_5 \cdot p_{_{E_2}}(h)^{\otimes 2} \otimes p_{_{E_3}}(h) \otimes p_{_{F_3}}(h)^{\otimes 2}$, $\frac{t^{23/24}}{4} T_5 \cdot p_{_{E_2}}(h)^{\otimes 2} \otimes p_{_{E_3}}(h)^{\otimes 2} \otimes p_{_{F_3}}(h)$.

For $k=6$ : $\frac{t^{21/24}}{5!} T_6 \cdot p_{_{E_2}}(h) \otimes p_{_{F_3}}(h)^{\otimes 5}$, $\frac{t^{11/12}}{4!} T_6 \cdot p_{_{E_2}}(h) \otimes p_{_{E_3}}(h) \otimes p_{_{F_3}}(h)^{\otimes 4}$, $\frac{t^{23/24}}{12} T_6 \cdot p_{_{E_2}}(h) \otimes p_{_{E_3}}(h)^{\otimes 2} \otimes p_{_{F_3}}(h)^{\otimes 3}$.

First, we note that
\begin{align*}
& \frac{1}{t^{21/24}} \left[ f(x^\star + t^{1/2}p_{_{E_1}}(h) + t^{1/4}p_{_{E_2}}(h) + t^{1/6}p_{_{E_3}}(h) + t^{1/8}p_{_{F_3}}(h)) - f(x^\star) \right] \\
\underset{t \rightarrow 0}{\longrightarrow} \ & \frac{1}{6} T_4 \cdot p_{_{E_1}}(h) \otimes p_{_{F_3}}(h)^{\otimes 3} + \frac{1}{12} T_5 \cdot p_{_{E_2}}(h)^{\otimes 2} \otimes p_{_{F_3}}(h)^{\otimes 3} + \frac{1}{5!} T_6 \cdot p_{_{E_2}}(h) \otimes p_{_{F_3}}(h)^{\otimes 5} \ge 0.
\end{align*}
Then, considering $h' = \lambda p_{_{E_1}}(h) + p_{_{E_2}}(h) + p_{_{E_3}}(h) + p_{_{F_3}}(h)$, we have that for all $\lambda \in \mathbb{R}$,
\begin{equation*}
\frac{\lambda}{6} T_4 \cdot p_{_{E_1}}(h) \otimes p_{_{F_3}}(h)^{\otimes 3} + \frac{1}{12} T_5 \cdot p_{_{E_2}}(h)^{\otimes 2} \otimes p_{_{F_3}}(h)^{\otimes 3} + \frac{1}{5!} T_6 \cdot p_{_{E_2}}(h) \otimes p_{_{F_3}}(h)^{\otimes 5} \ge 0 ,
\end{equation*}
so necessarily $$T_4 \cdot p_{_{E_1}}(h) \otimes p_{_{F_3}}(h)^{\otimes 3} = 0.$$
Then, considering $h' = p_{_{E_2}}(h) + \lambda p_{_{F_3}}(h)$ for $\lambda \in \mathbb{R}$, we get successively that the two other terms are null.

\medskip

Likewise, we prove successively that the terms in $t^{11/12}$ are null, and then that the terms in $t^{23/24}$ are null. This yields the convergence stated in \eqref{equation:order_8:2}.
\end{proof}

\subsection{Counter-example and proof of Theorem \ref{theorem:main} with $p\ge 5$ under the hypothesis \eqref{equation:even_terms_null:2}}
\label{section:order_10}

Algorithm \ref{algo:algorithm} may fail to yield such expansion of $f$ for orders no lower than $10$ if the hypothesis \eqref{equation:even_terms_null:2} is not fulfilled. Indeed for $p \ge 5$, there exist $p$-tuples $(i_1,\ldots,i_p)$ such that $\frac{i_1}{2}+\cdots+\frac{i_p}{2p} < 1$ and $i_1$, $\ldots$, $i_p$ are all even. Such tuples do not appear at orders $8$ and lower, but they do appear at orders $10$ and higher, for example $(0,2,0,0,4)$ for $k=6$.
In such a case, we cannot use the positiveness argument to prove that the corresponding term $T_k \cdot p_{_{E_1}}(h)^{\otimes i_1} \otimes \cdots \otimes p_{_{E_p}}(h)^{\otimes i_p}$ is zero, and in fact, it may be not zero.

Let us give a counter example. Consider
$$ \begin{array}{rrl}
f : & \mathbb{R}^2 & \longrightarrow \mathbb{R} \\
& (x,y) & \longmapsto x^4 + y^{10} + x^2 y^4.
\end{array} $$
Then $f \in \mathscr{A}_5^\star(0)$ and we have $E_1 = \lbrace 0 \rbrace$, $E_2 = \mathbb{R}\cdot(1,0)$, $E_3 = \lbrace 0 \rbrace$, $E_4 = \lbrace 0 \rbrace$, $F_4 = \mathbb{R}\cdot(0,1)$. But
$$ \frac{1}{t} f(t^{1/4}, t^{1/10}) = \frac{1}{t} \left(t + t + t^{9/10} \right) $$
goes to $+\infty$ when $t \rightarrow 0$.

\medskip

\textbf{Now, let us give the proof of Theorem \ref{theorem:main} for $p \ge 5$.} In this proof, we assume that the subspaces $E_1, \ \ldots, \ E_p$ given in Algorithm \ref{algo:algorithm} satisfy the hypothesis \eqref{equation:even_terms_null:2}.

\begin{proof}
We develop \eqref{equation:order_p:1}, which reads:
$$ \sum_{k=2}^{2p} \frac{1}{k!} \sum_{\substack{i_1,\ldots,i_{p} \in \lbrace 0,\ldots,k \rbrace \\ i_1 + \cdots + i_{p} = k }} \binom{k}{i_1,\ldots,i_{p}} t^{\frac{i_1}{2} + \cdots + \frac{i_p}{2p} -1} T_k \cdot p_{_{E_1}}(h)^{\otimes i_1} \otimes \cdots \otimes p_{_{E_p}}(h)^{\otimes i_p} + o(1) =: S .$$
The terms such that $\frac{i_1}{2} + \cdots + \frac{i_p}{2p} < 1$ may diverge when $t \rightarrow 0$, so let us prove that they are in fact null.
Let
$$ \alpha := \inf \left\lbrace \frac{i_1}{2}+\cdots+\frac{i_p}{2p} : \ h \longmapsto\sum_{k=2}^{2p} \frac{1}{k!} \sum_{\substack{i_1,\ldots,i_p \in \lbrace 0,\ldots,k \rbrace \\ i_1 + \cdots + i_{p} = k \\ \frac{i_1}{2} + \cdots + \frac{i_p}{2p}= \alpha }} \binom{k}{i_1,\ldots,i_{p}} T_k \cdot p_{_{E_1}}(h)^{\otimes i_1} \otimes \cdots \otimes p_{_{E_p}}(h)^{\otimes i_p} \not\equiv 0 \right\rbrace ,$$
and assume by contradiction that $\alpha < 1$. Then we have for all $h \in \mathbb{R}^d$:
$$ t^{1-\alpha} S \underset{t \rightarrow 0}{\longrightarrow} \left( \sum_{k=2}^{2p} \frac{1}{k!} \sum_{\substack{i_1,\ldots,i_p \in \lbrace 0,\ldots,k \rbrace \\ i_1 + \cdots + i_{p} = k \\ \frac{i_1}{2} + \cdots + \frac{i_p}{2p}= \alpha }} \binom{k}{i_1,\ldots,i_{p}} T_k \cdot p_{_{E_1}}(h)^{\otimes i_1} \otimes \cdots \otimes p_{_{E_p}}(h)^{\otimes i_p} \right) \ge 0,$$
by the local minimum property. Then, considering $h' = \lambda_1 p_{_{E_1}}(h) + \cdots + \lambda_p p_{_{E_p}}(h)$, we have, for all $h \in \mathbb{R}^d$ and $\lambda_1, \ \ldots, \ \lambda_d \in \mathbb{R}$,
\begin{equation}
\label{eq:order_5_polynomial}
\sum_{k=2}^{2p} \frac{1}{k!} \sum_{\substack{i_1,\ldots,i_p \in \lbrace 0,\ldots,k \rbrace \\ i_1 + \cdots + i_{p} = k \\ \frac{i_1}{2} + \cdots + \frac{i_p}{2p}= \alpha }} \lambda_1^{i_1}\ldots\lambda_p^{i_p} \binom{k}{i_1,\ldots,i_{p}} T_k \cdot p_{_{E_1}}(h)^{\otimes i_1} \otimes \cdots \otimes p_{_{E_p}}(h)^{\otimes i_p} \ge 0 .
\end{equation}

Now, we fix $h \in \mathbb{R}^d$ such that the polynomial in \eqref{eq:order_5_polynomial} in the variables $\lambda_1, \ \ldots, \ \lambda_p$ is not identically zero, and we consider $k_{\max}$ its highest homogeneous degree, so that we have
$$ \sum_{\substack{i_1,\ldots,i_p \in \lbrace 0,\ldots,k_{\max} \rbrace \\ i_1 + \cdots + i_{p} = k_{\max} \\ \frac{i_1}{2} + \cdots + \frac{i_p}{2p}= \alpha }} \lambda_1^{i_1}\ldots\lambda_p^{i_p} \binom{k_{\max}}{i_1,\ldots,i_{p}} T_{k_{\max}} \cdot p_{_{E_1}}(h)^{\otimes i_1} \otimes \cdots \otimes p_{_{E_p}}(h)^{\otimes i_p} \ge 0 .$$
If $k_{\max}$ is odd, this yields a contradiction, taking $\lambda_1 = \cdots = \lambda_p =: \lambda \rightarrow \pm \infty$. If $k_{\max}$ is even, we consider the index $l_1$ such that $i_{l_1} =: a_1$ is maximal and the coefficients in the above sum with $i_{l_1} = a_1$ are not all zero.
Then fixing all the $\lambda_l$ for $l \ne l_1$ and taking $\lambda_{l_1} \rightarrow \infty$, we have
$$ \sum_{\substack{i_1,\ldots,i_p \in \lbrace 0,\ldots,k_{\max} \rbrace \\ i_1 + \cdots + i_{p} = k_{\max} \\ \frac{i_1}{2} + \cdots + \frac{i_p}{2p}= \alpha \\ i_{l_1} = a_1 }} \lambda_1^{i_1}\ldots\lambda_p^{i_p} \binom{k_{\max}}{i_1,\cdots,i_{p}} T_{k_{\max}} \cdot p_{_{E_1}}(h)^{\otimes i_1} \otimes \cdots \otimes p_{_{E_p}}(h)^{\otimes i_p} \ge 0 .$$
Thus, if $a_1$ is odd, this yields a contradiction. If $a_1$ is even, we carry on this process by induction : knowing $l_1, \ \ldots, \ l_r$, we choose the index $l_{r+1}$ such that $l_{r+1} \notin \lbrace l_1,\ldots,l_r \rbrace$, the corresponding term
$$ \sum_{\substack{i_1,\ldots,i_p \in \lbrace 0,\ldots,k_{\max} \rbrace \\ i_1 + \cdots + i_{p} = k_{\max} \\ \frac{i_1}{2} + \cdots + \frac{i_p}{2p}= \alpha \\ i_{l_1}=a_1,\ldots,i_{l_{r+1}}=a_{r+1} }} \lambda_1^{i_1}\ldots\lambda_p^{i_p} \binom{k_{\max}}{i_1,\ldots,i_{p}} T_{k_{\max}} \cdot p_{_{E_1}}(h)^{\otimes i_1} \otimes \cdots \otimes p_{_{E_p}}(h)^{\otimes i_p} $$
is not identically null and such that $i_{l_{r+1}} =: a_{r+1}$ is maximal. Necessarily, $a_{r+1}$ is even.
In the end we will find a non-zero term whose exponents $i_{\ell}$ are all even which contradicts assumption \eqref{equation:even_terms_null:2}.
\end{proof}

\subsection{Proofs of the uniform convergence and of the non-constant property}
\label{subsec:unif_non_constant}

In this section we prove the additional properties claimed in Theorem \ref{theorem:main} : the uniform convergence with respect to $h$ on every compact set and the fact that the function $g$ is not constant in any of its variables $h_1, \ \ldots, \ h_d$.

\begin{proof}
First, let us prove that the convergence is uniform with respect to $h$ on every compact set. Let $\varepsilon >0$ and let $R>0$. By the Taylor formula at order $2p$, there exists $\delta>0$ such that for $||h|| < \delta$,
$$ \left| f(x^\star + h) - f(x^\star) - \sum_{k=2}^{2p} \frac{1}{k!} \sum_{i_1+\cdots+i_p=k} \binom{k}{i_1,\ldots,i_p} T_k \cdot p_{_{E_1}}(h)^{\otimes i_1} \otimes \cdots \otimes p_{_{E_p}}(h)^{\otimes i_p} \right| \le \varepsilon ||h||^{2p} .$$
Now, let us consider $t \rightarrow 0$ and $h \in \mathbb{R}^d$ with $||h|| \le R$. Then we have:
$$ \forall t \le \max\left(1, \left(\frac{\delta}{R}\right)^{1/(2p)}\right), \ ||t^{1/2}p_{_{E_1}}(h) + \cdots + t^{1/(2p)}p_{_{E_p}}(h) || \le \delta ,$$
so that
\begin{align*}
\left|\frac{1}{t} \left[ \right.\right. & \left. \left. f(x^\star + t^{1/2}p_{_{E_1}}(h) + \cdots + t^{1/(2p)}p_{_{E_p}}(h)) - f(x^\star)\right] - \sum_{k=2}^{2p} \frac{1}{k!} \sum_{i_1+\cdots+i_p=k} \binom{k}{i_1,\ldots,i_p} \right. \\
& \left. \cdot t^{\frac{i_1}{2}+\cdots+\frac{i_p}{2p}-1} T_k \cdot p_{_{E_1}}(h)^{\otimes i_1} \otimes \cdots \otimes p_{_{E_p}}(h)^{\otimes i_p} \right| \le \frac{\varepsilon}{t}||t^{1/2}p_{_{E_1}}(h) + \cdots + t^{1/(2p)}p_{_{E_p}}(h)||^{2p}.
\end{align*}
We proved or assumed that the terms such that $\frac{i_1}{2}+\cdots+\frac{i_p}{2p} < 1$ are zero. We denote by $g_1(h)$ the sum in the last equation with the terms such that $\frac{i_1}{2}+\cdots+\frac{i_p}{2p}=1$ and by $g_2(h)$ the sum with the terms such that $\frac{i_1}{2}+\cdots+\frac{i_p}{2p} > 1$. We also define $a$ as the smallest exponent of $t$ appearing in $g_2(h)$:
$$ a := \min \left\lbrace \frac{i_1}{2}+\cdots+\frac{i_p}{2p} \ : \ i_1,\ldots,i_p \in \lbrace 0,\ldots,2p\rbrace, \ i_1+\cdots+i_p \le 2p,  \ \frac{i_1}{2}+\cdots+\frac{i_p}{2p} > 1 \right\rbrace > 1 .$$
So that:
\begin{align}
\label{eq:uniform_proof:1}
& \left| \frac{1}{t} \left[ f(x^\star + t^{1/2}p_{E_1}(h) +\cdots + t^{1/(2p)}p_{_{E_p}}(h)) - f(x^\star) \right] - g_1(h) \right| \\
& \le t^{a-1}|g_2(h)| + \frac{\varepsilon}{t}||t^{1/2}p_{_{E_1}}(h) + \cdots + t^{1/(2p)}p_{_{E_p}}(h)||^{2p}. \nonumber
\end{align}
We remark that $h \mapsto g_2(h)$ is a polynomial function so is bounded on every compact set. We also have:
$$ \frac{\varepsilon}{t}||t^{1/2}p_{_{E_1}}(h) + \cdots + t^{1/(2p)}p_{_{E_p}}(h)||^{2p} \le \frac{\varepsilon (t^{1/(2p)})^{2p}}{t} ||h||^{2p} = \varepsilon ||h||^{2p} .$$
So \eqref{eq:uniform_proof:1} converges to $0$ as $t \rightarrow 0$, uniformly with respect to $h$ on every compact set.

\bigskip

Now let us assume that $f \in \mathscr{A}_p^\star(x^\star)$ ; we prove that the function $g$ defined in \eqref{eq:def_g} is not constant in any of its variables in the sense of \eqref{eq:def_non_constant}. Let $B \in \mathcal{O}_d(\mathbb{R})$ adapted to the decomposition $\mathbb{R}^d = E_1 \oplus \cdots \oplus E_p$. We have:
$$ \frac{1}{t} \left[ f\left( x^\star + B \cdot (t^\alpha \ast h)\right) - f(x^\star) \right] \underset{t \rightarrow 0}{\longrightarrow} g(h) .$$
Let $i \in \lbrace 1, \ldots, p \rbrace$ and $k$ such that $v_i := B \cdot e_i \in E_k$. Let us assume by contradiction that $g$ does not depend on the $i^{\text{th}}$ coordinate. Considering the expression of $g$ in \eqref{eq:def_g} and setting all the variables outside $E_k$ to $0$, we have:
$$ \forall h \in E_k, \ \lambda \in \mathbb{R} \mapsto T_{2k} \cdot (h + \lambda v_i)^{\otimes 2k} $$
is constant. Then applying \eqref{equation:multinomial}, we have:
$$ \forall h \in E_k, \ T_{2k} \cdot v_i \otimes h^{\otimes 2k-1} = 0 .$$
Moreover, for $h \in F_{k-1}$, let us write $h = h' + h''$ where $h' \in E_k$ and $h'' \in F_k$, so that
$$T_{2k} \cdot v_i \otimes h^{\otimes 2k-1} = T_{2k} \cdot v_i \otimes h'^{\otimes 2k-1} = 0,$$
where we used that $$ \forall h^{(3)} \in F_{k-1}, \ T_{2k} \cdot h'' \otimes \left(h^{(3)}\right)^{\otimes 2k-1} = 0$$
following \eqref{eq:F_k_def}, and Proposition \ref{proposition:null_tensor:1}. 
Considering the definition of $E_k$ as the orthogonal complement of $F_k$, which is defined in \eqref{eq:F_k_def}, the last equation contradicts that $v_i \in E_k$.
\end{proof}

\subsection{Non coercive case}
\label{section:non_coercive}

The function $g$ we obtain in Algorithm \ref{algo:algorithm} is a non-negative polynomial function which is constant in none of its variables. However, this does not always guarantee that $e^{-g} \in L^1(\mathbb{R}^d)$, or even that $g$ is coercive. Indeed, $g$ can be null on an unbounded continuous polynomial curve, while the polynomial degree of the minimum $x^\star$ of $f$ is higher than the degree of $g$ in these variables. For example, let us consider
\begin{align}
	\label{eq:non_coercive_f}
	f \colon \ \mathbb{R}^2 &\to \mathbb{R}\\
	(x,y) &\mapsto (x-y^2)^2 + x^6. \nonumber
\end{align}
Then $f \in \mathscr{A}_3^\star(0)$ and using Algorithm \ref{algo:algorithm}, we get
$$ g(x,y) = (x-y^2)^2 ,$$
which does not satisfy $e^{-g} \in L^1(\mathbb{R}^d)$.
In fact this case is highly degenerate, as, with
\begin{equation*}
f_\varepsilon(x,y) := f(x,y) + \varepsilon xy^2 = x^2 + y^4 - (2-\varepsilon)xy^2 + x^6 ,
\end{equation*}
we have that $g_\varepsilon(x,y) = x^2 + y^4 - (2-\varepsilon)xy^2$ satisfies $e^{-g_\varepsilon} \in L^1(\mathbb{R}^d)$ for every $\varepsilon \in (0,4)$ and that $x^\star$ is not the global minimum of $f_\varepsilon$ for every $\varepsilon \in (-\infty, 0) \cup (4, \infty)$.

We now prove that instead of assuming $e^{-g} \in L^1(\mathbb{R}^d)$, we can only assume that $g$ is coercive, which is justified in the following proposition. More specific conditions for $g$ to be coercive can be found in \cite{bajbar2015} and \cite{bajbar2019}.

\begin{proposition}
\label{prop:coercive}
Let $g : \mathbb{R}^d \rightarrow \mathbb{R}$ be the polynomial function obtained from Algorithm \ref{algo:algorithm}. If $g$ is coercive, then $e^{-g} \in L^1(\mathbb{R}^d)$.
\end{proposition}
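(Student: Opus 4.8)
The plan is to exploit the quasi-homogeneity of the polynomial produced by Algorithm~\ref{algo:algorithm}. Write $g$ as in~\eqref{eq:def_g} and set $G(y):=g(B^{-1}y)$, which is exactly the expression~\eqref{eq:def_g:2} with $h$ replaced by $y$ (the factor $B\cdot h$ becomes $y$). Since $B\in\mathcal O_d(\mathbb R)$, the orthogonal change of variables $h=B^{-1}y$ gives $\int_{\mathbb R^d}e^{-g}=\int_{\mathbb R^d}e^{-G}$, and $G$ is coercive if and only if $g$ is, because $\|B^{-1}y\|=\|y\|$; moreover $G\ge 0$ by the general properties of $g$ recalled after Theorem~\ref{theorem:main}. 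So it suffices to show $e^{-G}\in L^1(\mathbb R^d)$. For $s>0$ and $y\in\mathbb R^d$ define $y^{[s]}:=\sum_{j=1}^{p}s^{1/(2j)}p_{_{E_j}}(y)$; then $p_{_{E_j}}(y^{[s]})=s^{1/(2j)}p_{_{E_j}}(y)$ since the $E_j$ are pairwise orthogonal, so $\|y^{[s]}\|^2=\sum_{j=1}^{p}s^{1/j}\|p_{_{E_j}}(y)\|^2$, and because in~\eqref{eq:def_g:2} every surviving $p$-tuple satisfies $\frac{i_1}{2}+\cdots+\frac{i_p}{2p}=1$ one has the quasi-homogeneity relation $G(y^{[s]})=s\,G(y)$ for all $s>0$.

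Next I would show that coercivity forces $m:=\min_{\|y\|=1}G(y)>0$. Suppose instead that $G(y_0)=0$ for some $y_0\neq 0$. Then $G(y_0^{[s]})=s\,G(y_0)=0$ for every $s\ge 1$, whereas $\|y_0^{[s]}\|^2=\sum_{j=1}^{p}s^{1/j}\|p_{_{E_j}}(y_0)\|^2\ge s^{1/p}\|y_0\|^2\to+\infty$ as $s\to+\infty$; thus $G$ vanishes on a set of points of arbitrarily large norm, contradicting coercivity. Hence $G>0$ on $\mathbb R^d\setminus\{0\}$, and $m>0$ by compactness of the unit sphere and continuity of $G$.

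The quantitative step is as follows. Fix $y$ with $\|y\|\ge 1$. The map $s\mapsto\|y^{[s]}\|^2$ is continuous on $(0,1]$, tends to $0$ as $s\to0^+$, and equals $\|y\|^2\ge 1$ at $s=1$, so by the intermediate value theorem there is $s_*\in(0,1]$ with $\|y^{[s_*]}\|=1$. Set $u:=y^{[s_*]}$ and $\sigma:=1/s_*\ge 1$; a direct computation gives $u^{[\sigma]}=y$, hence $G(y)=\sigma\,G(u)\ge \sigma m$. Since $\sigma\ge 1$ and $1/j\le 1$ for $j\ge 1$, $\|y\|^2=\|u^{[\sigma]}\|^2=\sum_{j=1}^{p}\sigma^{1/j}\|p_{_{E_j}}(u)\|^2\le \sigma\|u\|^2=\sigma$, i.e. $\sigma\ge\|y\|^2$. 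Therefore $G(y)\ge m\|y\|^2$ for all $\|y\|\ge 1$, so $e^{-G(y)}\le e^{-m\|y\|^2}$ on $\{\|y\|\ge 1\}$, while $0\le e^{-G}\le 1$ on $\mathcal B(0,1)$. Integrating, $\int_{\mathbb R^d}e^{-G}\le \lambda_d(\mathcal B(0,1))+\int_{\mathbb R^d}e^{-m\|y\|^2}\,dy<+\infty$, which proves $e^{-G}\in L^1(\mathbb R^d)$ and hence $e^{-g}\in L^1(\mathbb R^d)$.

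The conceptual obstacle is that for a general polynomial coercivity is far weaker than exponential integrability (for instance $y\mapsto\log(1+\|y\|^2)$ is coercive in every dimension but $(1+\|y\|^2)^{-1}\notin L^1(\mathbb R^d)$ for $d\ge 2$); what rescues the statement here is the quasi-homogeneous structure of $g$ inherited from Algorithm~\ref{algo:algorithm}, and the technical heart is the anisotropic rescaling of an arbitrary point onto the unit sphere, yielding the comparison $\sigma\ge\|y\|^2$ and thus a genuine polynomial lower bound on $G$ at infinity.
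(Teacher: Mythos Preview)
Your proof is correct and follows essentially the same strategy as the paper: both arguments exploit the quasi-homogeneity $G(y^{[s]})=sG(y)$ built into the output of Algorithm~\ref{algo:algorithm} to produce a polynomial lower bound for $g$ at infinity, from which integrability of $e^{-g}$ is immediate.

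The differences are in the execution rather than the idea. The paper fixes $R\ge 1$ with $g\ge 1$ on $\{\|h\|\ge R\}$ and, by an explicit rescaling with parameter $s=\|h\|/R$, obtains the linear lower bound $g(h)\ge \|h\|/R$ for $\|h\|\ge R$. You instead first prove $m:=\min_{\|y\|=1}G(y)>0$ (via a neat contradiction using the same scaling), then rescale an arbitrary $y$ onto the unit sphere by the intermediate value theorem and extract the sharper quadratic bound $G(y)\ge m\|y\|^2$ for $\|y\|\ge 1$. Your route adds one preliminary step (positivity on the sphere) but yields a stronger inequality; the paper's route is slightly more direct but gives only linear growth. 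Either bound is of course sufficient for $e^{-g}\in L^1(\mathbb R^d)$.
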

\begin{proof}
Let
$$ A_k := \text{Span}\left(e_i: \ i \in \lbrace \dim(E_1) + \cdots + \dim(E_{k-1}) +1, \ldots, \dim(E_1)+\cdots+\dim(E_k) \rbrace \right) $$
for $k \in \lbrace 1,\ldots, p \rbrace$.
By construction of $g$, note that for all $t \in [0,+\infty)$,
$$ g\left(\sum_{k=1}^p t^{1/2k}p_{_{A_k}}(h)\right) = tg(h) .$$
Since $g$ is coercive, there exists $R \ge 1$ such that for every $h$ with $||h|| \ge R$, $g(h) \ge 1$.
Then, for every $h \in \mathbb{R}^d$, we have:
\begin{align*}
g(h) & = g\left( \sum_{k=1}^p p_{_{A_k}}(h) \right) = g\left( \sum_{k=1}^p \frac{||h||^{1/2k}}{R^{1/2k}}p_{_{A_k}}\left(R^{1/2k}\frac{h}{||h||^{1/2k}}\right) \right) \\
& = \frac{||h||}{R} g\left( \sum_{k=1}^p p_{_{A_k}}\left(R^{1/2k}\frac{h}{||h||^{1/2k}}\right) \right).
\end{align*}
Then, for $||h|| \ge R$,
\begin{align*}
\left|\left|  \sum_{k=1}^p p_{_{A_k}}\left(R^{1/2k}\frac{h}{||h||^{1/2k}}\right) \right| \right|^2 = \sum_{k=1}^p \frac{R^{1/k}}{||h||^{1/k}} ||p_{_{A_k}}(h)||^2 \ge \frac{R}{||h||} ||h||^2 = R ||h|| \ge R^2 \ge R,
\end{align*}
so that $g(h) \ge \frac{||h||}{R}$ which in turn implies $e^{-g} \in L^1(\mathbb{R}^d)$.
\end{proof}

We now deal with the simplest configuration where the function $g$ is not coercive, as described in \eqref{eq:non_coercive_sum}, by dealing with the case where $f$ is given by \eqref{eq:non_coercive_f}, which is an archetype of such configuration. However, dealing with the general case is more complicated and to give a general formula for the rate of convergence of the measure $\pi_t$ in this case is not our current objective.

\begin{proposition}
\label{prop:x_y2_convergence}
Let the function $f$ be given by \eqref{eq:non_coercive_f}. Then, if $(X_t,Y_t) \sim C_t e^{-f(x,y)/t} dx dy$, we have:
\begin{equation*}
\left( \frac{X_t}{t^{1/6}}, \frac{Y_t^2-X_t}{t^{1/2}} \right) \underset{t \rightarrow 0}{\longrightarrow} C\frac{e^{-x^6}}{\sqrt{x}} \frac{e^{-y^2}}{\sqrt{\pi}} \mathds{1}_{x \ge 0} dx dy ,
\end{equation*}
where $C = \left( \int_0^\infty \frac{e^{-x^6}}{\sqrt{x}}dx \right)^{-1}$.
\end{proposition}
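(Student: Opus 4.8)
The plan is to perform the change of variables $(x,y) \mapsto (u,v)$ where $u = x$ and $v = y^2 - x$ is natural here, but since $y \mapsto y^2$ is two-to-one it is cleaner to keep $(x,y)$ and rescale. First I would substitute $x = t^{1/6}\tilde x$ and introduce the auxiliary coordinate along the degenerate curve: write $y = (\tilde x \, t^{1/6} + t^{1/2}\tilde v)^{1/2}$ on $\{y > 0\}$ (and symmetrically on $\{y<0\}$), i.e. parametrize the mass near the parabola $x = y^2$. Under this substitution $f(x,y) = (x - y^2)^2 + x^6 = t\,\tilde v^2 + t\,\tilde x^6$, so that $f/t \to \tilde v^2 + \tilde x^6$ exactly (not just asymptotically), and the exponent is already separated. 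The Jacobian of $(x,y)\mapsto(\tilde x,\tilde v)$ on each half-plane $\{y>0\}$, $\{y<0\}$ contributes a factor; computing $\partial(x,y)/\partial(\tilde x,\tilde v)$ gives $dx\,dy = \tfrac{t^{1/6}\cdot t^{1/2}}{2\,|y|}\,d\tilde x\,d\tilde v$, and on the relevant region $|y| = (t^{1/6}\tilde x + t^{1/2}\tilde v)^{1/2} \approx t^{1/12}\sqrt{\tilde x}$ for $\tilde x > 0$, so the Jacobian behaves like $\tfrac12 t^{1/6+1/2-1/12}\,\tilde x^{-1/2}\,d\tilde x\,d\tilde v = \tfrac12 t^{7/12}\,\tilde x^{-1/2}\,d\tilde x\,d\tilde v$, and the two half-planes double this.

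Next I would make this rigorous by bounding the density uniformly so as to apply dominated convergence. The key estimates: on $\{x < 0\}$ or $\{|v|$ large$\}$ the integrand $e^{-f/t}$ is exponentially negligible because $(x-y^2)^2 + x^6$ is coercive away from the parabola's right half; more precisely one shows $f(x,y) \ge c(|x|^6 + (x-y^2)^2)$ and that the contribution of $\{x \le -\eta\}$ or of $\{|y^2 - x| \ge \eta\}$ to $\int e^{-f/t}$ is $o$ of the contribution of a neighbourhood of the curve. This is the same mechanism as Proposition \ref{proposition:gibbs}. Then in the good region the computation above shows
$$
C_t^{-1} = \int_{\mathbb R^2} e^{-f/t}\,dx\,dy \sim t^{7/12}\int_0^\infty \frac{e^{-\tilde x^6}}{\sqrt{\tilde x}}\,d\tilde x \int_{\mathbb R} e^{-\tilde v^2}\,d\tilde v = t^{7/12}\,\sqrt\pi\,C^{-1},
$$
and the same substitution applied to $\mathbb E[\varphi(X_t/t^{1/6}, (Y_t^2 - X_t)/t^{1/2})]$ for bounded continuous $\varphi$ yields, after passing to the limit, $\int \varphi(\tilde x,\tilde v)\, C\,\tilde x^{-1/2}e^{-\tilde x^6}\mathds 1_{\tilde x\ge 0}\,\pi^{-1/2}e^{-\tilde v^2}\,d\tilde x\,d\tilde v$, which is the claimed limiting law. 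Note here that $v = y^2 - x$ has the opposite sign to the $(x - y^2)$ appearing in $f$, but since it is squared this is immaterial.

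The main obstacle I expect is controlling the change of variables near $\tilde x = 0$, where $|y| \to 0$ and the Jacobian $1/|y|$ blows up: one must check that the singularity $\tilde x^{-1/2}$ is integrable (it is) and that no extra mass escapes through the region where $t^{1/6}\tilde x$ and $t^{1/2}\tilde v$ are comparable, i.e. where the linearization of $y^2$ around the curve breaks down. The cleanest way around this is probably to integrate out $y$ first for fixed $x > 0$: $\int_{\mathbb R} e^{-(x - y^2)^2/t}\,dy$; for $x \gg t^{1/2}$ a Laplace/stationary-phase argument around $y = \pm\sqrt x$ gives $\sim 2\cdot\tfrac{1}{2\sqrt x}\sqrt{\pi t}\cdot\tfrac12 = \sqrt{\pi t}/(2\sqrt x)$ — wait, more carefully $\int e^{-(x-y^2)^2/t}dy = \sqrt{\pi t}\,/\sqrt{x}\cdot(1+o(1))$ uniformly for $x \ge t^{1/2-\epsilon}$ — and then the remaining one-dimensional integral $\int_0^\infty x^{-1/2}e^{-x^6/t}\sqrt{\pi t}\,dx = \sqrt{\pi t}\cdot t^{(1/2)(1/6)\cdot\frac{1}{?}}\cdots$ recovers the $t^{7/12}$ scaling and the stated marginals, the $\{x \le 0\}$ and small-$x$ regions being shown negligible by crude bounds. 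I would present the proof in this order: (i) reduce to $x > \delta t^{1/6}$ region; (ii) integrate out $y$ by Laplace's method; (iii) rescale $x = t^{1/6}\tilde x$ and identify the limit; (iv) assemble via Slutsky/portmanteau.
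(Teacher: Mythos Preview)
Your approach is correct and essentially the same as the paper's: both perform the change of variables $(x,y)\mapsto(t^{-1/6}x,\,t^{-1/2}(y^2-x))$, compute the Jacobian to extract the $t^{7/12}$ scaling and the $\tilde x^{-1/2}$ factor, and pass to the limit by dominated convergence. The paper routes the substitution through an intermediate step $(x,y)\mapsto(t^{-1/2}x,\,t^{-1/4}y)$ before introducing $u=y^2-x$ and then rescaling $x$ once more, but the composite is exactly your map and the remaining justification is the same.
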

\begin{proof}
First, let us consider the normalizing constant $C_t$. We have :
\begin{align*}
C_t^{-1} & = \int_{\mathbb{R}^2} e^{-\frac{(x-y^2)^2+x^6}{t}} dx dy = 2t^{3/4} \int_{-\infty}^{\infty} e^{-t^2 x^6} \int_0^\infty e^{-(y^2-x)^2}dy \ dx \\
& = t^{3/4} \int_{-\infty}^{\infty} e^{-t^2 x^6} \int_{-x}^\infty \frac{e^{-u^2}}{\sqrt{u+x}}dy \ dx
= t^{7/12} \int_{-\infty}^{\infty} e^{-x^6} \int_{-t^{-1/3}x}^{\infty} \frac{e^{-u^2}}{\sqrt{t^{1/3}u+x}} du \ dx \\
& \underset{t \rightarrow 0}{\sim} t^{7/12} \int_{0}^{\infty} \frac{e^{-x^6}}{\sqrt{x}} \int_{-\infty}^{\infty} e^{-u^2} du \ dx,
\end{align*}
where the convergence is obtained by dominated convergence and where we performed the change of variables $x' = t^{-1/6}x$ and $u = t^{-1/2}(y^2-x)$. Then we consider, for $a_1 < b_1$ and $a_2 < b_2$,
$$ \mathbb{P}\left( \frac{X_t}{t^{1/6}} \in [a_1,b_1], \ \frac{Y^2-X}{t^{1/2}} \in [a_2,b_2] \right) .$$
Performing the same changes of variables and using the above equivalent of $C_t$ completes the proof.
\end{proof}

More generally, if the function $g$ is not coercive and if we can write, up to a change of basis,
\begin{equation}
\label{eq:non_coercive_sum}
g(h_1,\ldots,h_d) = Q_1(h_1,h_2)^2 + Q_2(h_3,h_4)^2 + \cdots + Q_r(h_{2r-1},h_{2r})^2 + \widetilde{g}(h_{2r+1},\ldots,h_d) ,
\end{equation}
where the $Q_i$ are polynomials with two variables null on an unbounded curve (for example, $Q_i(x,y) = (x-y^2)$, $Q_i(x,y) = (x^2-y^3)$, $Q_i(x,y)=x^2y^2$), and where $\widetilde{g}$ is a non-negative coercive polynomial, then
\begin{align*}
& \left( a_1\left((X_t)_{1}, (X_t)_{2}, t \right), \ldots, a_r\left((X_t)_{2r-1}, (X_t)_{2r}, t \right), \left(\frac{1}{t^{\alpha_{2r+1}}},\ldots,\frac{1}{t^{\alpha_d}}\right) \ast \left(\widetilde{B} \cdot ((X_t)_{2r+1},\ldots,(X_t)_d)\right) \right) \\
& \underset{t \rightarrow 0}{\longrightarrow} b_1(x_1,x_2) \ldots b_r(x_{2r-1},x_{2r}) Ce^{-\widetilde{g}(x_{2r+1},\ldots,x_d)} dx_1 \ldots dx_{2r} dx_{2r+1}\cdots dx_d ,
\end{align*}
where $C$ is a normalization constant, $\widetilde{B} \in \mathcal{O}_{d-2r-1}(\mathbb{R})$ is an orthogonal transformation and for all $k=1,\ldots,r$, $a_k : \mathbb{R}^2 \times (0,+\infty) \rightarrow \mathbb{R}^2$ and $b_k$ is a density on $\mathbb{R}^2$. Such $a_k$ and $b_k$ can be obtained by applying the same method as in Proposition \ref{prop:x_y2_convergence}.
Algorithm \ref{algo:algorithm} yields the first change of variable for this method, given by the exponents $(\alpha_i)$ (in the proof of Proposition \ref{prop:x_y2_convergence}, the first change of variable is $t^{-1/2}x$ and $t^{-1/4}y$) and thus seems to be the first step of a more general procedure in this case.
However, we do not give a general formula as the general case is cumbersome. Moreover, we do not give a method where the non coercive polynomials $Q_i$ depend on more than two variables, like
$$ Q(x,y,z) = (x-y^2)^2 + (x-z^2)^2 .$$
The method sketched in Proposition \ref{prop:x_y2_convergence} cannot be direclty applied to this case.

\section{Proofs of Theorem \ref{theorem:single_well} and Theorem \ref{theorem:multiple_well} using Theorem \ref{theorem:main}}
\label{section:proofs_athreya}

\subsection{Single well case}
\label{subsec:single_well_proof}

We now prove Theorem \ref{theorem:single_well}.

\begin{proof}
Using Theorem \ref{theorem:main}, we have for all $h \in \mathbb{R}^d$:
$$ \frac{1}{t} f(B \cdot (t^\alpha \ast h)) \underset{t \rightarrow 0}{\longrightarrow} g(h) .$$
To simplify the notations, assume that there is no need of a change of basis i.e. $B=I_d$. We want to apply Theorem \ref{theorem:athreya:1} to the function $f$. However the condition 
$$ \int_{\mathbb{R}^d} \sup_{0<t<1} e^{-\frac{f \left(t^{\alpha_1}h_1,\ldots,t^{\alpha_d}h_d\right)}{t}} dh_1\ldots dh_d < \infty $$
is not necessarily true. Instead, let $\varepsilon > 0$ and we apply Theorem \ref{theorem:athreya:1} to $\widetilde{f}$, where $\widetilde{f}$ is defined as:
$$ \widetilde{f}(h) = \left\lbrace \begin{array}{ll}
f(h) & \text{ if } h \in \mathcal{B}(0,\delta) \\
||h||^2 & \text{ else} ,
\end{array} \right.  $$
and where $\delta > 0$ will be fixed later. Then $\widetilde{f}$ satisfies the hypotheses of Theorem \ref{theorem:athreya:1}. The only difficult point to prove is the last condition of Theorem \ref{theorem:athreya:1}. 
If $t \in (0,1]$ and $h \in \mathbb{R}^d$ are such that $(t^{\alpha_1}h_1,\ldots,t^{\alpha_d}h_d) \notin \mathcal{B}(0,\delta)$, then
$$ \frac{\widetilde{f} (t^{\alpha_1}h_1,\ldots,t^{\alpha_d}h_d)}{t} = \frac{||(t^{\alpha_1}h_1,\ldots,t^{\alpha_d}h_d) ||^2}{t} \ge ||h||^2, $$
because for all $i$, $\alpha_i \le \frac{1}{2}$. If $t$ and $h$ are such that $(t^{\alpha_1}h_1,\ldots,t^{\alpha_d}h_d)  \in \mathcal{B}(0,\delta)$, then choosing $\delta$ such that for all $(t^{\alpha_1}h_1,\ldots,t^{\alpha_d}h_d) \in \mathcal{B}(0,\delta)$,
$$ \left| \frac{f (t^{\alpha_1}h_1,\ldots,t^{\alpha_d}h_d)}{t} - g(h) \right| \le \varepsilon, $$
which is possible because of the uniform convergence on every compact set (see Section \ref{subsec:unif_non_constant}), we derive that
$$ \frac{f (t^{\alpha_1}h_1,\ldots,t^{\alpha_d}h_d)}{t} \ge g(h) - \varepsilon .$$
Hence
$$ \int_{\mathbb{R}^d} \sup_{0<t<1} e^{-\frac{\widetilde{f} \left(t^{\alpha_1}h_1,\ldots,t^{\alpha_d}h_d\right)}{t}} dh_1\ldots dh_d \le \int_{\mathbb{R}^d} e^{-||h||^2} dh + e^{\varepsilon} \int_{\mathbb{R}^d} e^{-g(h)} dh .$$
Since $g$ is coercive, using Proposition \ref{prop:coercive} we have $e^{-g} \in L^1(\mathbb{R}^d)$ and it follows from Theorem \ref{theorem:athreya:1} that if $\widetilde{X}_t$ has density $\widetilde{\pi}_t(x) := \widetilde{C}_t e^{-\widetilde{f}(x)/t}$, then
$$ \left( \frac{(\widetilde{X}_t)_1}{t^{\alpha_1}}, \ldots, \frac{(\widetilde{X}_t)_d}{t^{\alpha_d}} \right) \overset{\mathscr{L}}{\longrightarrow} X  \ \text{ as } t \rightarrow 0 ,$$
where $X$ has density proportional to $e^{-g(x)}$.

Now, let us prove that if $X_t$ has density proportional to $e^{-f(x)/t}$, then we also have
\begin{equation}
\label{equation:Y_law_convergence}
\left( \frac{(X_t)_1}{t^{\alpha_1}}, \ldots, \frac{(X_t)_d}{t^{\alpha_d}} \right) \overset{\mathscr{L}}{\longrightarrow} X  \ \text{ as } t \rightarrow 0 .
\end{equation}
Let $\varphi : \mathbb{R}^d \rightarrow \mathbb{R}$ be continuous with compact support. Then
\begin{align*}
& \mathbb{E}\left[ \varphi\left( \frac{(X_t)_1}{t^{\alpha_1}}, \ldots, \frac{(X_t)_d}{t^{\alpha_d}} \right) - \varphi\left( \frac{(\widetilde{X}_t)_1}{t^{\alpha_1}}, \ldots, \frac{(\widetilde{X}_t)_d}{t^{\alpha_d}} \right) \right] \\
& = \int_{\mathbb{R}^d} \varphi \left(\frac{x_1}{t^{\alpha_1}},\ldots,\frac{x_d}{t^{\alpha_d}}\right) \left(C_t e^{-\frac{f(x_1,\ldots,x_d)}{t}} - \widetilde{C}_t e^{-\frac{\widetilde{f}(x_1,\ldots,x_d)}{t}} \right) dx_1\ldots dx_d =: I_1 + I_2,
\end{align*}
where $I_1$ is the integral on the set $\mathcal{B}(0,\delta)$ and $I_2$ on $\mathcal{B}(0,\delta)^c$. We have then:
$$ |I_2| \le || \varphi ||_\infty ( \pi_t(\mathcal{B}(0,\delta)^c) + \widetilde{\pi}_t(\mathcal{B}(0,\delta)^c) ) \underset{t \rightarrow 0}{\longrightarrow} 0 ,$$
where we used Proposition \ref{proposition:gibbs}. On the other hand, we have $f=\widetilde{f}$ on $\mathcal{B}(0,\delta)$, so that
$$ |I_1| \le ||\varphi||_\infty |C_t - \widetilde{C}_t| \int_{\mathcal{B}(0,\delta)} e^{-\frac{f(x)}{t}}dx \le ||\varphi||_\infty \left|1 - \frac{\widetilde{C}_t}{C_t}\right| .$$
And we have:
$$ \frac{\widetilde{C}_t}{C_t} = \frac{\int e^{-\frac{f(x)}{t}}dx}{\int e^{-\frac{\widetilde{f}(x)}{t}}dx} = \frac{\int_{\mathcal{B}(0,\delta)} e^{-\frac{f(x)}{t}}dx + \int_{\mathcal{B}(0,\delta)^c} e^{-\frac{f(x)}{t}}dx}{\int_{\mathcal{B}(0,\delta)} e^{-\frac{f(x)}{t}}dx + \int_{\mathcal{B}(0,\delta)^c} e^{-\frac{\widetilde{f}(x)}{t}}dx} .$$
By Proposition \ref{proposition:gibbs}, we have when $t \rightarrow 0$
\begin{align*}
\int_{\mathcal{B}(0,\delta)^c} e^{-\frac{\widetilde{f}(x)}{t}}dx & = o\left( \int_{\mathcal{B}(0,\delta)} e^{-\frac{\widetilde{f}(x)}{t}}dx \right) \\
\int_{\mathcal{B}(0,\delta)^c} e^{-\frac{f(x)}{t}}dx & = o\left( \int_{\mathcal{B}(0,\delta)} e^{-\frac{f(x)}{t}}dx \right),
\end{align*}
so that $\widetilde{C}_t/C_t \rightarrow 1$, so $I_1 \rightarrow 0$, which then implies \eqref{equation:Y_law_convergence}.
\end{proof}

\subsection{Multiple well case}

We now prove Theorem \ref{theorem:multiple_well}.

\begin{proof}
The first point is a direct application of Theorem \ref{theorem:athreya:2}.
For the second point, we remark that $X_{it}$ has a density proportional to $e^{-f_i(x)/t}$, where
$$ f_i(x) := \left\lbrace\begin{array}{l}
f(x) \text{ if } x \in \mathcal{B}(x_i, \delta) \\
+ \infty \text{ else}.
\end{array} \right. $$
We then consider $\widetilde{f}_i$ as in Section \ref{subsec:single_well_proof}:
$$ \widetilde{f}_i(x) = \left\lbrace \begin{array}{ll}
f_i(x) & \text{ if } x \in \mathcal{B}(x_i,\delta) \\
||x||^2 & \text{ else} .
\end{array} \right. $$
and still as in Section \ref{subsec:single_well_proof}, we apply Theorem \ref{theorem:athreya:1} to $\widetilde{f}_i$ and then prove that random variables with densities proportional to $e^{-\widetilde{f}_i(x)/t}$ and $e^{-f_i(x)/t}$ respectively have the same limit in law.
\end{proof}

\section{Infinitely flat minimum}
\label{section:flat}

In this section, we deal with an example of infinitely flat global minimum, where we cannot use a Taylor expansion.

\begin{proposition}
Let $f : \mathbb{R}^d \rightarrow \mathbb{R}$ such that
$$ \forall x \in \mathcal{B}(0,1), \ f(x) = e^{-\frac{1}{||x||^2}} $$
and
$$ \forall x \notin \mathcal{B}(0,1), \ f(x) > a $$
for some $a > 0$. Furthermore, assume that $f$ is coercive and $e^{-f} \in L^1(\mathbb{R}^d)$. Then, if $X_t$ has density $\pi_t$,
$$ \log^{1/2}\left(\frac{1}{t}\right) \cdot X_t \overset{\mathscr{L}}{\longrightarrow} X \ \text{ as } t \rightarrow 0 ,$$
where $X \sim \mathcal{U}(\mathcal{B}(0,1))$.
\end{proposition}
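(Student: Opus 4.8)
The guiding computation is as follows: write $t = e^{-L}$ with $L := \log(1/t) \to +\infty$, and substitute $x = y/\sqrt{L}$ inside the unit ball. For $\|y\| < \sqrt{L}$ one has $f(y/\sqrt{L}) = e^{-1/\|y/\sqrt{L}\|^2} = e^{-L/\|y\|^2}$, hence $f(y/\sqrt{L})/t = e^{-L/\|y\|^2} e^{L} = e^{L(1 - 1/\|y\|^2)}$, which converges to $0$ if $\|y\| < 1$ and to $+\infty$ if $\|y\| > 1$ as $L \to \infty$. Thus $e^{-f(y/\sqrt{L})/t} \to \mathds{1}_{\{\|y\| < 1\}}$ pointwise, which already reveals both the correct normalization $\sqrt{L}\,X_t$ and the uniform limit. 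First I would note that $f \ge 0$, so $e^{-f/t} \le e^{-f} \in L^1(\mathbb{R}^d)$ for $t \le 1$ and $\pi_t$ is well defined; set $Y_t := \sqrt{L}\,X_t$ and $\omega_d := \lambda_d(\mathcal{B}(0,1))$. For a bounded continuous $\varphi : \mathbb{R}^d \to \mathbb{R}$, the change of variables $x = y/\sqrt{L}$ makes the Jacobian $L^{-d/2}$ and the normalizing constant cancel, giving
$$ \mathbb{E}\left[ \varphi(Y_t) \right] = \frac{\int_{\mathbb{R}^d} \varphi(y)\, e^{-f(y/\sqrt{L})/t}\, dy}{\int_{\mathbb{R}^d} e^{-f(y/\sqrt{L})/t}\, dy}. $$

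Next I would split both integrals at $\|y\| = \sqrt{L}$. On $\{\|y\| > \sqrt{L}\}$, i.e. $y/\sqrt{L} \notin \mathcal{B}(0,1)$, we have $f > a$; changing variables back to $x$ and bounding $|\varphi| \le \|\varphi\|_\infty$, this part is at most $\|\varphi\|_\infty L^{d/2} e^{-a(1/t - 1)} \int_{\mathbb{R}^d} e^{-f(x)}\, dx \to 0$, since $e^{-a/t}$ dominates every power of $\log(1/t)$. On $\{\|y\| < \sqrt{L}\}$ the integrand equals $\varphi(y)\, e^{-\exp(L(1 - 1/\|y\|^2))}$, so the crux is to show
$$ \int_{\{\|y\| < \sqrt{L}\}} \varphi(y)\, e^{-\exp(L(1 - 1/\|y\|^2))}\, dy \underset{L \to \infty}{\longrightarrow} \int_{\{\|y\| < 1\}} \varphi(y)\, dy. $$

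On the fixed ball $\{\|y\| < 1\}$ this is dominated convergence: the integrand is bounded by $\|\varphi\|_\infty$ and converges pointwise to $\varphi(y)$. I expect the main (mild) technical obstacle to be the expanding annulus $\{1 \le \|y\| < \sqrt{L}\}$, whose volume blows up like $\omega_d L^{d/2}$; here I would split once more. On the thin shell $\{1 \le \|y\| < 1 + L^{-1/4}\}$ the integrand is $\le \|\varphi\|_\infty$ while the volume is $O(L^{-1/4}) \to 0$. On $\{1 + L^{-1/4} \le \|y\| < \sqrt{L}\}$ one checks $1 - 1/\|y\|^2 \ge \tfrac{1}{2} L^{-1/4}$ (using $1 - (1+\delta)^{-2} \ge \delta/2$ for $0 < \delta \le 1$), hence $\exp(L(1 - 1/\|y\|^2)) \ge \exp(\tfrac{1}{2} L^{3/4})$ and the integrand is $\le \|\varphi\|_\infty \exp(-\exp(\tfrac{1}{2} L^{3/4}))$; multiplied by the volume bound $\omega_d L^{d/2}$ this still tends to $0$. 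Combining the pieces, the numerator of the displayed ratio converges to $\int_{\{\|y\| < 1\}} \varphi$ and, taking $\varphi \equiv 1$, the denominator converges to $\omega_d$; therefore $\mathbb{E}[\varphi(Y_t)] \to \omega_d^{-1} \int_{\{\|y\| < 1\}} \varphi = \mathbb{E}[\varphi(X)]$ for $X \sim \mathcal{U}(\mathcal{B}(0,1))$, which is exactly the asserted weak convergence of $\log^{1/2}(1/t)\, X_t$.
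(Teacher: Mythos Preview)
Your proof is correct and follows essentially the same route as the paper: the change of variables $x=y/\sqrt{\log(1/t)}$, the pointwise convergence $e^{-f(y/\sqrt{L})/t}\to\mathds{1}_{\{\|y\|<1\}}$, and the separate treatment of the region $\|x\|>1$. Your explicit shell splitting on $\{1\le\|y\|<\sqrt{L}\}$ is in fact a careful justification of what the paper compresses into the phrase ``by dominated convergence'' on an expanding domain, and your use of general bounded continuous test functions in place of indicator boxes is a harmless variation.
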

\begin{proof}
Noting that $\int_{||x|| >1}e^{-f(x)/t}dx \rightarrow 0$ as $t \rightarrow 0$ by dominated convergence, we have
$$ C_t \underset{t \rightarrow 0}{\sim} \left(\int_{\mathcal{B}(0,1)} e^{-e^{-\frac{1}{||x||^2}}/t} dx \right)^{-1} = \log^{d/2}\left(\frac{1}{t}\right) \left( \underbrace{\int_{\mathcal{B}(0,\sqrt{\log(1/t)})} e^{-t^{\frac{1}{||x||^2}-1}} dx}_{\underset{t \rightarrow 0}{\rightarrow} \text{Vol}(\mathcal{B}(0,1)) } \right)^{-1}, $$
where the convergence of the integral is obtained by dominated convergence. Then we have, for $-1<a_i<b_i<1$ and $\sum_i a_i^2 < 1$, $\sum_i b_i^2 < 1$:
\begin{align*}
\mathbb{P}\left(\log^{1/2}\left(\frac{1}{t}\right) \cdot X_t \in \prod_{i=1}^d [a_i,b_i] \right) = \frac{C_t}{\log^{d/2}\left(\frac{1}{t}\right)}\int_{(a_i)}^{(b_i)} e^{-t^{\frac{1}{|x|^2}-1}} dx \underset{t \rightarrow 0}{\longrightarrow} \frac{\prod_{i=1}^d (b_i-a_i)}{\text{Vol}(\mathcal{B}(0,1))}.
\end{align*}
\end{proof}

\section*{Acknowledgements}
\noindent I would like to thank Gilles Pag\`es for insightful discussions.

\newcommand{\etalchar}[1]{$^{#1}$}

\appendix

\section{Properties of tensors}

\begin{proposition}
\label{proposition:null_tensor:1}
Let $T_k$ be a symmetric tensor of order $k$ in $\mathbb{R}^d$. Let $E$ be a subspace of $\mathbb{R}^d$. Assume that
$$ \forall h \in E, \ T_k \cdot h^{\otimes k} = 0 .$$
Then we have
$$ \forall h_1,\ldots,h_k \in E, \ T_k \cdot h_1 \otimes \cdots \otimes h_k = 0 .$$
\end{proposition}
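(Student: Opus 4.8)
The plan is to prove the statement by induction on the order $k$, exploiting two elementary facts: that contracting a symmetric tensor against a fixed vector in one slot yields a symmetric tensor of order $k-1$, and that $E$ being a subspace is closed under the linear combinations that appear when one polarizes a homogeneous form. This is exactly the polarization identity for symmetric multilinear maps, rephrased slot by slot so that no formula beyond the multinomial expansion \eqref{equation:multinomial} is needed.

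For $k=1$ the conclusion is literally the hypothesis. For the inductive step, assume the result for symmetric tensors of order $k-1$, let $T_k$ be symmetric of order $k$ with $T_k \cdot h^{\otimes k} = 0$ for all $h \in E$, and fix $v \in E$. For every $h \in E$ and $\lambda \in \mathbb{R}$ one has $h + \lambda v \in E$, hence $T_k \cdot (h+\lambda v)^{\otimes k} = 0$; expanding the left-hand side with \eqref{equation:multinomial} and using the symmetry of $T_k$ to gather equal terms, this becomes a polynomial in $\lambda$ whose coefficient of $\lambda^1$ equals $k\, T_k \cdot v \otimes h^{\otimes(k-1)}$. Since the polynomial vanishes identically in $\lambda$, I get $T_k \cdot v \otimes h^{\otimes(k-1)} = 0$ for all $h \in E$. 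Now I set $S := T_k \cdot v$, the order-$(k-1)$ tensor obtained by contracting the first slot of $T_k$ against $v$; it is symmetric because $T_k$ is, and the previous line says $S \cdot h^{\otimes(k-1)} = 0$ for all $h \in E$. The induction hypothesis applied to $S$ gives $S \cdot h_2 \otimes \cdots \otimes h_k = 0$ for all $h_2,\dots,h_k \in E$, that is, $T_k \cdot v \otimes h_2 \otimes \cdots \otimes h_k = 0$. As $v \in E$ is arbitrary and $T_k$ is symmetric (so no slot is distinguished), this is precisely $T_k \cdot h_1 \otimes \cdots \otimes h_k = 0$ for all $h_1,\dots,h_k \in E$, closing the induction.

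I do not expect a genuine obstacle here; the only care needed is the bookkeeping in the multinomial expansion — checking that after symmetrization the $\lambda^1$-coefficient is exactly $k\, T_k \cdot v \otimes h^{\otimes(k-1)}$ — and the (immediate) remark that a one-slot contraction of a symmetric tensor stays symmetric. As an alternative one could invoke the polarization identity in one shot, $T_k \cdot h_1 \otimes \cdots \otimes h_k = \frac{1}{k!}\sum_{S \subseteq \{1,\dots,k\}} (-1)^{k-|S|}\, T_k \cdot \big(\sum_{i\in S} h_i\big)^{\otimes k}$, and note that each argument $\sum_{i\in S} h_i$ lies in $E$ so every summand is zero; the inductive argument above is just the slot-by-slot version of this and is fully self-contained given \eqref{equation:multinomial}.
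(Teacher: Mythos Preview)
Your proof is correct and is essentially the same polarization argument as the paper's, just organized inductively one slot at a time rather than in a single shot. The paper expands $T_k \cdot (\lambda_1 h_1 + \cdots + \lambda_k h_k)^{\otimes k}$ with all $k$ scalar parameters at once via \eqref{equation:multinomial} and reads off the coefficient of $\lambda_1\cdots\lambda_k$; this is precisely the ``one-shot'' alternative you mention at the end.
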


\begin{proof}
Using \eqref{equation:multinomial}, we have for $h_1, \ \ldots, \ h_k \in E$ and $\lambda_1, \ \ldots, \ \lambda_k \in \mathbb{R}$,
$$ T_k \cdot (\lambda_1 h_1+\cdots+\lambda_k h_k)^{\otimes k} = \sum_{i_1+\cdots+i_k=k} \binom{k}{i_1,\ldots,i_k} \lambda_1^{i_1}\ldots\lambda_k^{i_k} T_k \cdot h_1^{\otimes i_1} \otimes \cdots \otimes h_k^{\otimes i_k} = 0 ,$$
which is an identically null polynomial in the variables $\lambda_1, \ \ldots, \ \lambda_k$, so every coefficient is null, in particular
$$ \forall h_1,\ldots,h_k \in E, \ T_k \cdot h_1 \otimes \cdots \otimes h_k = 0 .$$
\end{proof}

\end{document}